	\tikzset{%
		cross/.style = {preaction={draw=white,line width=4pt}},
		dot1/.style = {draw,circle,minimum size=3pt,inner sep=0pt,outer sep=3pt,thick,fill},
		dot2/.style = {draw,circle,minimum size=3pt,inner sep=0pt,outer sep=3pt,thick},
		dot3/.style = {draw,rectangle,minimum size=3pt,inner sep=0pt,outer sep=3pt,thick,fill},
		dot4/.style = {draw,rectangle,minimum size=3pt,inner sep=0pt,outer sep=3pt,thick},
		steiner1/.style = {},
		steiner2/.style = {dash pattern=on .2cm off .1cm,thick},
		steiner3/.style = {very thick,dotted},
		steiner4/.style = {decorate,decoration={crosses,segment length=.18cm}},
		steiner5/.style = {decorate,decoration={zigzag,amplitude=.04cm}},
		steiner6/.style = {decorate,decoration={shape backgrounds,shape=rectangle,shape size=.05cm,shape sep=.1cm}},
	}
\theoremstyle{plain}
	\newtheorem{theorem}{Theorem}[section]
	\newtheorem{lemma}[theorem]{Lemma}
	\newtheorem{proposition}[theorem]{Proposition}
	\newtheorem{maintheorem}{Theorem}
\theoremstyle{definition}
	\newtheorem{definition}[theorem]{Definition}
\theoremstyle{remark}
	\newtheorem*{remark*}{Remark}
\DeclareMathOperator{\rank}{rank}
\providecommand{\R}{\mathbb R}
\newcommand*\gsquare{\mathbin{\protect\scalebox{.707}{$\square$}}}
\newcommand*\srs{\textup{\textsf{srs}}}
\newcommand*\ths{\textup{\textsf{ths}}}
\begin{document}

%\title{Periodic Steiner graphs minimizing length} -- title of version 1
\title{Periodic Steiner networks minimizing length}
\date{\today}
\author[Alex \& Grosse-Brauckmann]{Jerome Alex, Karsten Grosse-Brauckmann}
\address{Technische Universit\"at Darmstadt, Fachbereich Mathematik (AG 3),
    Schlossgartenstr.~7, 64289 Darmstadt, Germany}
\email{jalex@mathematik.tu-darmstadt.de, kgb@mathematik.tu-darmstadt.de}
\subjclass[2010]{05C10; 53A10, 49Q05}

\begin{abstract}
\noindent
  We study a problem of geometric graph theory:
  We determine the triply periodic graph in Euclidean $3$-space 
  which minimizes length among all graphs 
  spanning a fundamental domain of $3$-space with the same volume.  
  The minimizer is the so-called \textsf{srs} network 
  with quotient the complete graph on four vertices~$K_4$.
  The network spans the body centred cubic lattice
  and is related to the gyroid triply periodic surface.
\end{abstract}

\maketitle

\section{Introduction}

Given a finite set of points, the Steiner problem is to find
a tree of minimal length connecting them~\cite{minimalnetworks}.
While this is a classical problem for the plane,
the case of dimension~$3$ and higher has received less attention.
Trees minimizing length usually have further vertices
which necessarily are of degree~$3$,
where the incident edges are coplanar and meet at $120^\circ$-angles. 
This is valid for any dimension, 
and we call this the \emph{Steiner condition}.

Here we consider infinite graphs without terminal vertices
in Euclidean space which are multiply periodic and have a finite quotient.
We call these graphs \emph{networks} or, 
if all vertices have degree~$3$ and the Steiner condition is met, 
\emph{Steiner networks}. 
We make these notions precise in Section~\ref{sec:stnetworks}.

We are interested in the case of dimension~$3$,
which has a strong motivation by surface theory.
There are various self-assembling biological and chemical systems 
which give rise to triply periodic interfaces.
As pointed out for instance in~\cite{deCampo},  
the most prevalent geometry is the \emph{gyroid}, 
a triply periodic embedded surface 
with the body-centred cubic lattice and quotient surface of genus~$3$.
Alan Schoen discovered the gyroid minimal surface in the 1970's 
in terms of a Steiner network~\cite{gyroidschoen} 
(see also~\cite{GrosseBrauckmann1996}).
By calling this network~\srs, a name which 
refers to the strontium silicide $\mathrm{SrSi_2}$ crystal,
we follow a crystallographic convention
(see~\cite{chemistrystructure} and also \href{http://www.rcsr.net}{rcsr.net}).
Other names for the network are Laves~\cite{coxeter}
or $(10,3)$-a~\cite{wells} (see also~\cite{schoennotices}).

The \textsf{srs} network, shown in Figure~\ref{fig:3D},
is highly symmetric, with symmetry group~$I4_132$.
Its quotient under the body-centred cubic lattice 
is the complete graph on $4$~vertices~$K_4$, 
see Figure~\ref{fig:abstractlist3}.
However, Steiner networks with $4$ vertices in the quotient
exist for arbitrary lattices (see Theorem~\ref{th:minimizer}).
Since nature neither assumes symmetries nor the choice of a particular
graph, this raises the question:
\emph{Is there a simple property distinguishing the $I4_132$-symmetric}
\textsf{srs} \emph{network from all other networks?}
\begin{figure}
	\centering
	\begin{subfigure}{.865\linewidth}
	\centering
	\includegraphics[width=\linewidth]{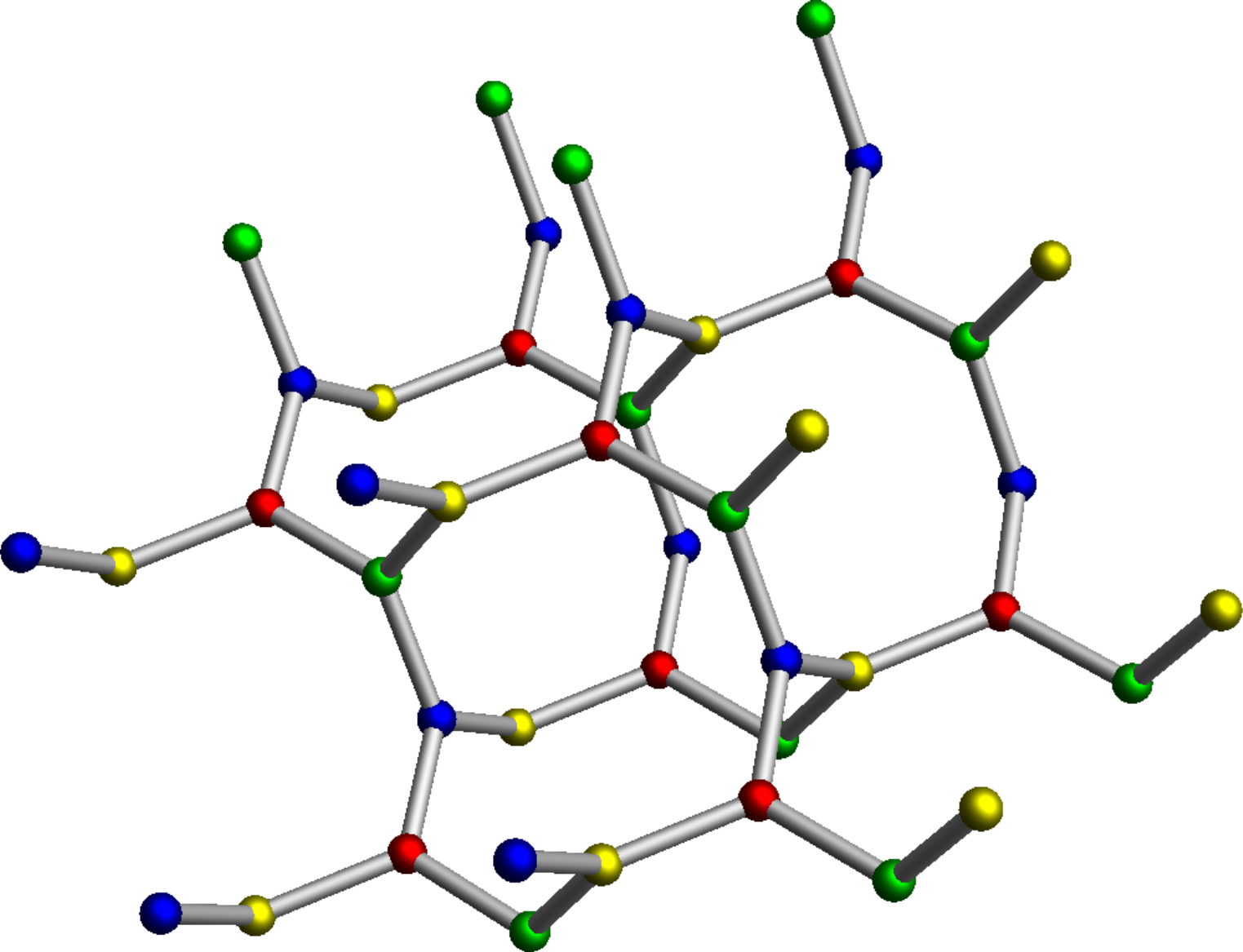}
	\end{subfigure}
	\vspace*{5mm}
	\begin{subfigure}{.78\linewidth}
	\centering
	\includegraphics[width=\linewidth]{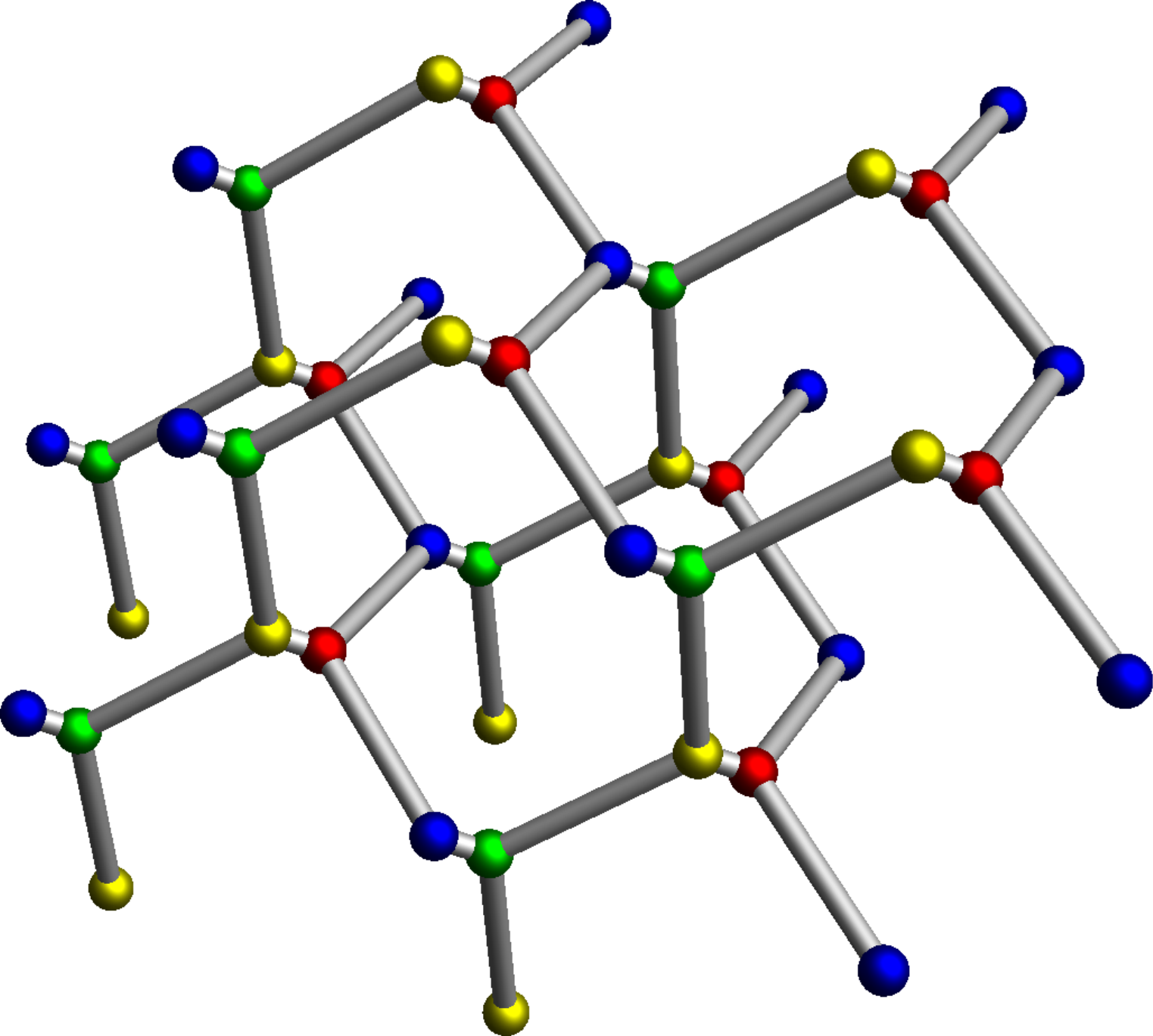}
	\end{subfigure}
	\caption{We identify the \textsf{srs} network (top) as the length minimizer in the class of all triply periodic networks. As indicated by the colouring, 
the quotient has four vertices and is the graph~$K_4$.  Triply periodic Steiner networks on four vertices can also have the graph~$D_1\gsquare D_2$ as a quotient; a minimizing \ths~network is depicted on the bottom.  Observe that the long edges define zigzag curves which are contained in perpendicular planes.
The short edges are contained in lines of intersection of these planes.}
	\label{fig:3D}
\end{figure}

This question is significant in the context of surface theory.
Indeed, the gyroid minimal surface has been compared numerically 
with other explicitely known minimal surfaces.
Compared with these particular surfaces, 
% and under suitable normalizations,
the gyroid has certain optimal features:
among them network length, surface area per fundamental cell, 
Gauss curvature variance, or channel diameter variance 
\cite{kgbgyroidinterface,SchroderTurk2006}.
Nevertheless, it seems completely out of reach to show optimality
in comparison to arbitrary surfaces,
in particular without prescribing a lattice.

The present paper identifies the \srs~network as the unique network 
minimzing length in a sense we describe now, 
and which is illustrated by Figure~\ref{fig:Steinerex2d}
for the two-dimensional case.
Let $\Lambda$ be the lattice of a triply periodic network~$N\subset\R^3$. 
Then the fundamental domain $\R^3/\Lambda$ is a flat $3$-torus 
with volume~$V$, and the network quotient~$N/\Lambda$ has a length~$L$.
We usually refer to $V$ and~$L$ 
as the volume and length of the network~$N$ itself.
Since scaling can reduce the length of~$N$, a well-posed variational problem is:
\emph{Minimize the network length~$L$ under the constraint $V=1$.}
Equivalently, one can minimize the scale-invariant ratio~$L^3/V$.
Our main result answers the above question:
\begin{figure}
	\centering
	\begin{subfigure}[c]{.3\linewidth}
		\centering
		\begin{tikzpicture}[scale=1.5]
			\coordinate[dot1](p0) at (0,0);
			\coordinate[dot1](p1) at (1,0);
			\coordinate[dot1](p2) at (1,1);
			\coordinate[dot1](p3) at (0,1);
			\draw(p0) -- (p1) -- (p2) -- (p3) -- (p0) -- (p2);
			\draw[cross] (p1) -- (p3);
		\end{tikzpicture}
	\end{subfigure}
	\begin{subfigure}[c]{.3\linewidth}
		\centering
		\begin{tikzpicture}[scale=1.5]
			\coordinate[dot1](p0) at (0,0);
			\coordinate[dot1](p1) at (1,0);
			\coordinate[dot1](p2) at (1,1);
			\coordinate[dot1](p3) at (0,1);
			\draw(p0) to [out=-45,in=225] (p1);
			\draw(p0) to [out=45,in=135] (p1);
			\draw(p3) to [out=-45,in=225] (p2);
			\draw(p3) to [out=45,in=135] (p2);
			\draw(p0) -- (p3) (p1) -- (p2);
		\end{tikzpicture}
	\end{subfigure}
	\caption{The two graphs with degree $3$ on four vertices without loops:
          $K_4$ (left) and~$D_1\gsquare D_2$~(right).}
	\label{fig:abstractlist3}
\end{figure}
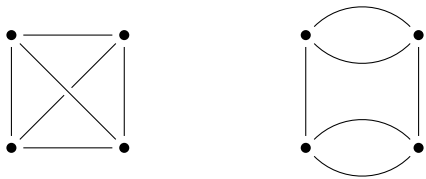

\begin{maintheorem}\label{thm:A}
  The length and volume of a triply periodic network~$N$ in~$\R^3$ satisfy
  % with lattice~$\Lambda$
  \begin{equation}\label{lengthratioestimate}
    \frac {L^3}V\geq\frac{27}{\sqrt2}\, =\, 19.09\ldots\; .
  \end{equation}
  Equality holds exactly for the \srs\ network, 
  where the lattice $\Lambda$ is body-centered cubic.
\end{maintheorem}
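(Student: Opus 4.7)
The first step is a standard first-variation argument: any minimiser of $L^3/V$ must satisfy the Steiner condition.  Vertices of degree~$2$ can be removed (triangle inequality) and vertices of degree $\ge 4$ can be Steinerised to strictly shorten $L$ with $V$ unchanged; at a degree-$3$ vertex the vanishing of the sum of outgoing unit tangents is exactly the first-order condition for that vertex to minimise its contribution to~$L$.  So I may restrict attention to Steiner networks.

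\textbf{Bounding and classifying the quotient.}  Let $n$ be the number of vertices of $G=N/\Lambda$.  Since $G$ is $3$-regular it has $m=3n/2$ edges and first Betti number $b_1(G)=n/2+1$.  The period map $H_1(G;\mathbb{Z})\to\Lambda$ is surjective for a triply periodic network, so $b_1(G)\ge 3$ and hence $n\ge 4$.  For $n=4$ the only $3$-regular loop-free graphs are $K_4$ and $D_1\gsquare D_2$ (Figure~\ref{fig:abstractlist3}).

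\textbf{Solving the two $n=4$ cases.}  For each quotient graph the plan is to parameterise Steiner realisations by $v_0=0, v_1, v_2, v_3\in\R^3$ together with lattice shifts on the three cotree edges, subject to the four Steiner balance conditions.  After factoring rigid motions this becomes a finite-dimensional variational problem for $L^3/V$.  Exploiting the symmetries of $K_4$ and $D_1\gsquare D_2$ and a second-order analysis, I expect the minima to be the symmetric representatives: the \srs\ network with $L^3/V=27/\sqrt 2$ for $K_4$, and a \ths\ network with a strictly larger value for $D_1\gsquare D_2$.

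\textbf{Extending to $n>4$ and the main obstacle.}  A unified argument covering all $n\ge 4$ at once can be attempted via the positive semidefinite moment tensor $M:=\sum_e\ell_e\,\tau_e\otimes\tau_e$, which satisfies $\operatorname{tr} M=L$.  The AM--GM inequality on its eigenvalues gives $\det M\le(L/3)^3=L^3/27$, with equality iff $M$ is isotropic.  A converse geometric inequality $\det M\ge V/\sqrt 2$ (equality at \srs), plausibly proved by a Cauchy--Binet expansion of $V=|\det B|$ for a lattice basis $B$ assembled from cotree cycle sums together with the Steiner balance, would then yield~\eqref{lengthratioestimate} uniformly, and the rigidity in both inequalities forces \srs\ and a body-centred cubic $\Lambda$.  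I expect the main obstacle to be precisely this converse inequality $\det M\ge V/\sqrt 2$: the interaction of integer cycle data with edge-length weights is delicate, and the case $n>4$ appears hard to handle without such a uniform bound.
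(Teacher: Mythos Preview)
Your first three steps align with the paper's strategy, though you are vaguer about existence (the paper runs a compactness argument on minimising sequences with a fixed lattice, Theorem~\ref{th:minimizer}) and about the explicit parameterisation of the two $n=4$ families (the paper shows the Steiner condition forces the four tangent planes into a rigid configuration, so the networks are determined by the six edge lengths alone, and $V$ becomes an explicit cubic polynomial to be maximised under $L=1$ via Lagrange multipliers; see Lemmas~\ref{lem:thsformula}, \ref{lem:srsangle}, \ref{lem:srsformula}).

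The genuine gap is your Step~4. The moment-tensor inequality $\det M\ge V/\sqrt2$ is speculative, and you correctly flag it as the main obstacle --- but it is entirely avoidable. What you are missing is a simple \emph{rank reduction}. Fix the lattice~$\Lambda$ and suppose $b_1(G)>3$. Choose a spanning tree $T\subset G$ and reinsert cotree edges one at a time, at each step selecting an edge whose cycle contributes a new generator of~$\Lambda$; after three steps the subgraph already has surjective period map onto~$\Lambda$. The remaining $b_1(G)-3$ cotree edges can simply be deleted: the resulting network is still connected and $\Lambda$-periodic, has the same~$V$, and has strictly smaller~$L$. Hence a minimiser for fixed~$\Lambda$ must satisfy $b_1(G)=3$, and after your degree reductions $n=4$ exactly. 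This is the content of Lemma~\ref{le:circuitrank}, and it eliminates the $n>4$ case without any moment-tensor machinery. Once $n=4$ is forced, the proof is the finite-dimensional optimisation you outline in Step~3.
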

\noindent
Here, the terminlogy is as in Section~\ref{sec:stnetworks};
in particular uniqueness is always up to (unnecessary) vertices of degree~$2$.

Let us indicate how the results of our paper combine to prove Theorem~\ref{thm:A}.
First, for a fixed lattice we establish 
the existence of a minimizer of~$L^3/V$ in Theorem~\ref{th:minimizer}.
It must be an embedded Steiner network on $4$~vertices. 
Since a minimizer cannot have loops in the quotient graph 
(Lemma~\ref{lem:Steinerloop})
only the two graphs shown in Figure~\ref{fig:abstractlist3} can arise.
Theorems~\ref{thm:ths} and~\ref{thm:srs} then give
sharp estimates for the ratio $L^3/V$ of networks with arbitrary lattice
for the two cases of quotient graph.
These estimates imply~\eqref{lengthratioestimate} as well as
the characterization of the equality case.
This establishes Theorem~\ref{thm:A} for arbitrary 
triply periodic networks.

An obvious approach to prove the estimates 
of Theorems~\ref{thm:ths} and~\ref{thm:srs}
would be to minimize the ratio $L^3/V$ for a given lattice,
and thereafter minimize over all lattices. 
However, Steiner networks for a given lattice are not unique, 
and lattices are inconvenient to parameterize. 
So we use a different approach:
In Lemmas~\ref{lem:thsformula} and~\ref{lem:srsformula} 
we show that it is possible to parameterize the space of Steiner networks 
covering each of the underlying graphs by their six edge lengths alone
(plus an angle parameter in one case).
Then not only the length~$L$
but also the volume~$V$ become explicit functions of these parameters
(Lemma~\ref{lem:thsformula} and~\ref{lem:srsformula}).
Thus to prove Theorems~\ref{thm:ths} and~\ref{thm:srs} 
we need to solve
a finite dimensional optimization problem under constraints:
On our parameter space, we minimize the total length~$L$ 
under the constraints that $V=1$ and the length parameters be positive.
Then it turns out that lattice generators are linear 
functions of our parameters.

The graph different from $K_4$ which arises in the proof of Theorem~\ref{thm:A}
is~$D_1\gsquare D_2$, the Cartesian graph product
of the dipole graphs $D_1$ and~$D_2$ (see Figure~\ref{fig:abstractlist3}).
% We will not make use of this combinatorial structure.  
In Theorem~\ref{thm:ths} we determine the $1$-parameter family 
which attains the minimal length ratio for this quotient graph 
(see Figure~\ref{fig:3D}).
We call these minimizers \ths\ networks,
again making use of a crystallographic name, this time refering
to the thorium silicide $\mathrm{ThSi_2}$ crystal (it is also known as $(10,3)$-b~\cite{wells}).
Our result for networks with quotient~$D_1\gsquare D_2$ is: 
% The length of optimal networks of type~\ths\ happens to be
% just 2\% larger than the \srs~network, assuming the volume is fixed to~$1$:
% The figure is 1.0198.  However, L^3/V is about 6% larger.
\begin{maintheorem}\label{th:B}
        A triply periodic network in $\R^3$ 
        with quotient~$D_1\gsquare D_2$ satisfies
	\[\frac{L^3}V\geq\frac{81}4\,=\,20.25\, .\]
        Equality holds exactly for a one-parameter family of (non-similar)
        Steiner networks, all with the same lattice~$\Lambda$,
        for instance generated by $(1,0,0),(0,1,0),(1/2,1/2,\sqrt3/2)$. 
        Moreover, each of these minimizing \ths\ networks can be 
        homotoped into a Steiner network of smaller length 
        covering~$K_4$,
        such that the length is non-increasing and the lattice remains fixed.
\end{maintheorem}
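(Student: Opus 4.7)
The plan is to invoke Lemma~\ref{lem:thsformula} first, which parameterizes the space of Steiner networks with quotient~$D_1\gsquare D_2$ by their six positive edge lengths and supplies explicit formulas both for the total quotient length~$L$ and for the fundamental-cell volume~$V$. Because $L^3/V$ is scale invariant, the problem reduces to a constrained minimization on an open cone in~$\R^6$; I would fix the scale by imposing $V=1$ and then minimize~$L$.

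To establish the sharp bound $L^3/V\geq 81/4$ and identify the equality set, I would apply Lagrange multipliers to the system obtained from the formulas of Lemma~\ref{lem:thsformula}. The combinatorial symmetries of $D_1\gsquare D_2$---the swap of the two parallel edges in each dipole and the swap exchanging the two dipole pairs---should force most of the critical-point equations to coincide, reducing the system to one in only a few scalar unknowns. I expect solving it to yield a $1$-parameter family of critical configurations, each realising $L^3/V=81/4$. A standard boundary analysis, checking that $L^3/V\to\infty$ as any edge length tends to $0$ or~$\infty$, or as the fundamental parallelepiped degenerates, then promotes these critical values to the global minimum and rules out minima on the boundary of the parameter cone. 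The lattice at a minimizer can be read off directly from the parameterization and compared to the basis $(1,0,0),(0,1,0),(1/2,1/2,\sqrt3/2)$; I expect the whole family to share this one lattice because the remaining free parameter will correspond to an intrinsic deformation of the network inside a fixed fundamental domain.

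The part I expect to be the main obstacle is the length-non-increasing homotopy from a minimizing \ths\ network to a Steiner network covering~$K_4$. Since $D_1\gsquare D_2$ and $K_4$ are not isomorphic as graphs, the homotopy cannot stay within Steiner networks and must pass through a degenerate intermediate configuration. The approach I would take is to shrink one edge in each of the two double-edge pairs continuously to zero while adjusting the remaining parameters to keep $\Lambda$ fixed and $L$ monotone non-increasing; at the instant these edges vanish, pairs of quotient vertices collide, and the surviving edges, after re-identification of endpoints, realise a $K_4$ network in the same lattice. A final adjustment opens up the collapsed vertices along the $K_4$ directions to produce a genuine embedded periodic Steiner network of strictly smaller length. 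The delicate points are (a) ensuring that edges do not cross during the deformation, so that embeddedness is preserved, and (b) verifying monotonicity of~$L$ throughout both phases; the fact that the minimal \ths\ ratio $81/4$ strictly exceeds the \srs\ ratio $27/\sqrt2$ provides the length budget that makes such a homotopy plausible.
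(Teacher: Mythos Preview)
Your overall strategy matches the paper's---parameterize via Lemma~\ref{lem:thsformula}, optimize with Lagrange multipliers, then build an explicit homotopy---but three points in your sketch would not go through as written.

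First, Lemma~\ref{lem:thsformula} parameterizes Steiner networks over~$D_1\gsquare D_2$ by the six edge lengths \emph{and} an angle~$\alpha\in(0,\pi)$, not by the lengths alone; the volume carries a factor $\sin\alpha$. This is easily handled (one fixes $\alpha=\pi/2$ at the outset since $L$ is independent of~$\alpha$), but it should be acknowledged.

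Second, your boundary claim that $L^3/V\to\infty$ as any edge length tends to~$0$ is false. If, say, $x_5\to 0$ with the other five lengths held positive, the volume \eqref{eq:thsvol} stays bounded away from zero, so $L^3/V$ stays bounded. In fact the entire minimizing family $x_1=x_2=x_3=x_4=2(x_5+x_6)$ meets the faces $\{x_5=0\}$ and $\{x_6=0\}$ in its closure. The paper deals with this by first observing that $L$ and $V$ depend on $x_5,x_6$ only through $y:=x_5+x_6$, reducing to five variables $(x_1,\ldots,x_4,y)$; on the boundary of~$[0,\infty)^5$ it then establishes the \emph{strict} inequality $L^3/V>81/4$ via Maclaurin's inequality (Lemma~\ref{le:elsympol}), which forces the maximum of~$V$ under $L=1$ into the interior.

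Third, your homotopy contracts the wrong edges. Shrinking one edge in each double-edge pair would identify two \emph{pairs} of vertices, leaving a two-vertex graph with loops, which is not a common degeneration with~$K_4$. The correct move (see Figure~\ref{fig:graphcontraction}) is to contract one of the two \emph{simple} edges, $e_5$ or $e_6$: this merges a single pair of vertices into a degree-$4$ vertex, and the resulting three-vertex graph is also the contraction of~$K_4$ along one edge. Concretely, the paper first slides within the minimizing \ths\ family by sending $x_5\to 0$ (keeping $x_5+x_6$ and hence $L$ constant), reaching a degenerate network~$N_0$; it then linearly interpolates the vertex positions of~$N_0$ toward an explicitly given Steiner $K_4$ network~$N_1$ with the same lattice, and verifies that $t\mapsto L(N_t)$ is decreasing on $[0,1]$ by noting that it is convex (as a sum of norms of affine functions), critical at $t=1$ (since $N_1$ is balanced), and satisfies $L(N_1)<L(N_0)$.
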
\noindent
The homotopy is established in Theorem~\ref{th:homotopy}
and visualized in Figure~\ref{fig:thsk4}.
We take it as evidence for why in nature
\ths\ networks play a minor role compared to~\srs. 
\smallskip

\begin{figure}
	\centering
	\begin{subfigure}{.25\linewidth}
		\centering
		\includegraphics[width=\linewidth]{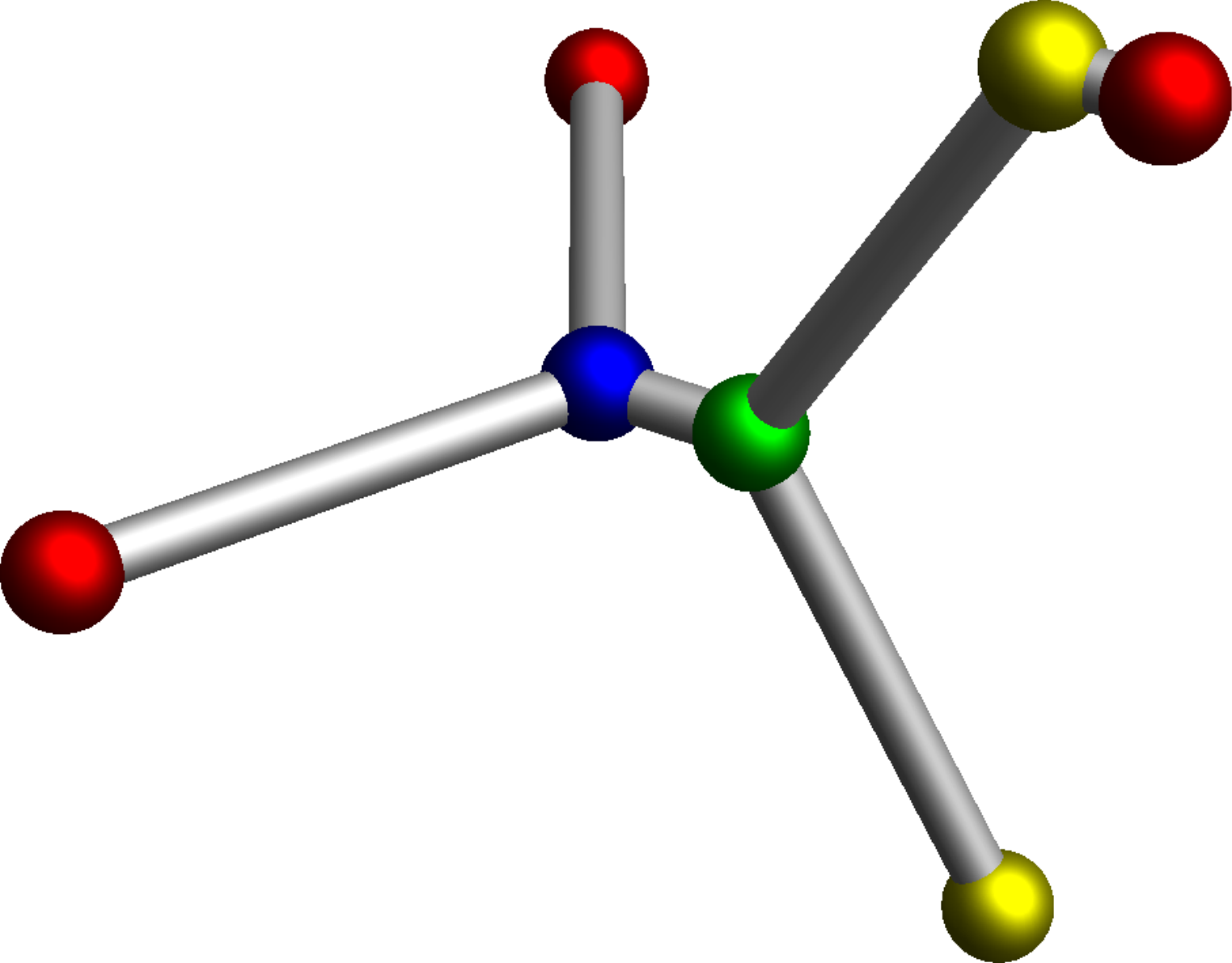}
		\caption{}
	\end{subfigure}\hfill
	\begin{subfigure}{.25\linewidth}
		\centering
		\includegraphics[width=\linewidth]{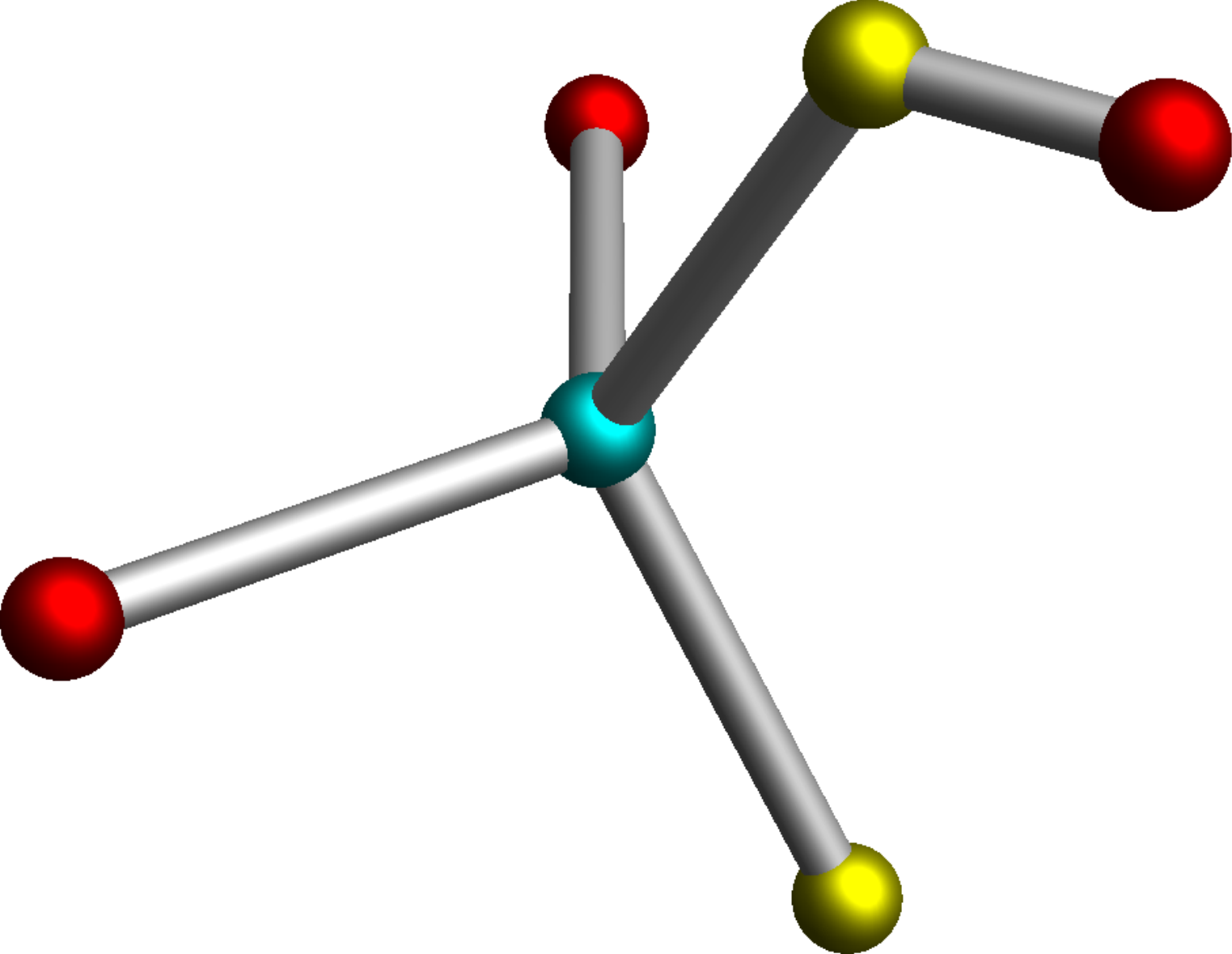}
		\caption{}
	\end{subfigure}\hfill
	\begin{subfigure}{.25\linewidth}
		\centering
		\includegraphics[width=\linewidth]{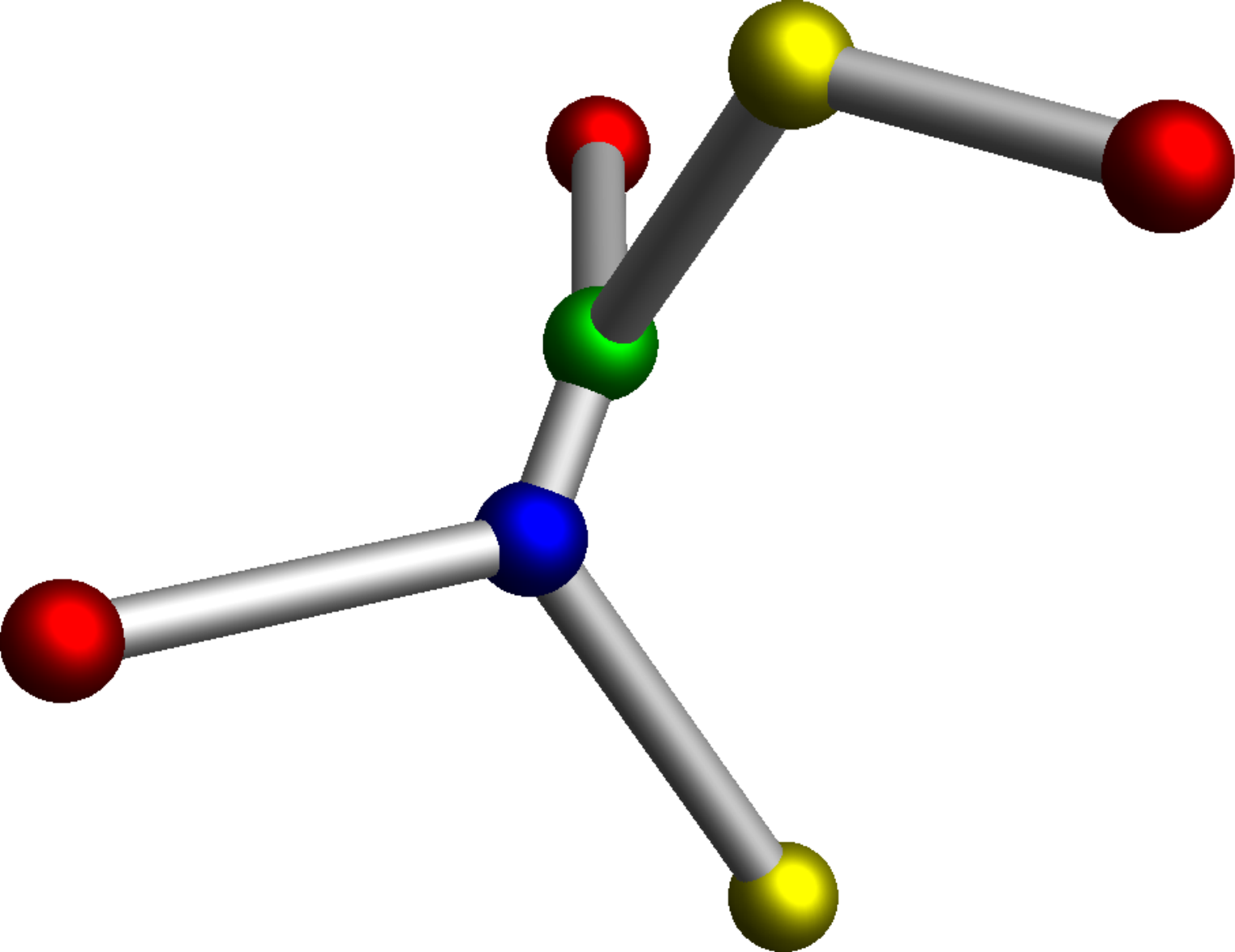}
		\caption{}
	\end{subfigure}
	\caption{A \ths~network~(a) can continuously be deformed into a triply periodic \srs-network~(c).  The homotopy preserves the lattice but decreases length. The change of topology occurs when two vertices coincide~(b).}
	\label{fig:thsk4}
\end{figure}

Our results should be compared with the work of 
Sunada and Kotani~\cite{kotani2001standard} 
(see also~\cite{sunada2012topological,crystals}).
Instead of minimizing the length $L=\sum_{i=1}^m x_i$
of a network with edge lengths $x_1,\ldots,x_m$,
they minimize the quadratic energy $E=\sum_{i=1}^m (x_i)^2$;
they also impose a volume constraint.
For various combinatorial types of networks, 
Sunada and Kotani determine energy minimizers,
essentially by solving a linear algebra problem.
Unlike for our setting, minimizers are unique for given combinatorics. 
The \srs\ network is also the energy minimizer,
while for the case of~$D_1\gsquare D_2$, 
Sunada and Kotani exhibit a unique energy 
minimizer, not contained in our length minimizing \ths~family.

The energy~$E$ can be understood to model a physical crystal 
by harmonic oscillators along the edges of the network.
Nevertheless, for the modelling of many other real world problems, 
in particular those relating to surface theory, 
the network length~$L$
seems more appropriate than the quadratic energy~$E$. 
Note the analogy to the case of curves and surfaces, 
where variational problems involving length and area are natural,
while the minimization of energy is usually simpler to handle mathematically.
At this place we would 
like to recommend Sunada's book~\cite{sunada2012topological} 
as a comprehensive introduction to the 
geometric theory of networks and graphs, 
in particular to a systematic treatment of their topology. 
%
% Here is a short summary of Sunada.
% 1. Approach: 
% a) Take a finite graph, select n generators of homology H_1(G,Z) 
% (perhaps there are more).
% Define an abstract infinite abelian covering graph, by mapping the 
% n generators to nonclosed cycles.  Call this a topological crystal.
% b) Embed this graph into R^n in a non-degenerate way
% (def'd on p.99: no two vertices coincide, no two edges at a vertex 
% point in the same direction).
% 2. Variational problems:
% a) Define harmonic realization of top. cryst. w.r.t. lattice \rho (p.104/05):
% Prescribe the map from the n generators to lattice generators,
% and minimize the energy \sum edge-length^2 extending over quotient edges.
% Euler equation is  \sum edge_vectors = 0 , summing over edges at each vertex,
% that is, harmonic means barycentric vertex placement.
% Note that applying affine maps preserves this notion, that is, a harmonic 
% realization for one lattice gives harmonic realization for all others.
% b) Now consider energy times 1/vol^{n/2}, that is, a normalized energy.
% A standard realization is a minimizer of this energy (p.109), 
% so it also selects a lattice.  Euler equation: additionally satisfies (7.9).
% The standard realization is unique up to similarity (Thm. 7.5).
% In fact, he makes it up to motion, by setting the constant c in (7.9) to 2.
% It can be found by applying linear maps to harmonic realizations,
% and solving for (7.9).

It remains open to determine optimal Steiner networks 
in higher dimensions $n\geq 4$.  
While our reasing generalizes in principle,
the number of admissible combinatorial graphs
strongly increases with~$n$.
In the forthcoming paper \cite{higherdegree} we pursue another direction:
There we study length minimizing $n$-periodic networks 
with vertices of prescribed degree $d\ge 4$ in~$\R^3$ and~$\R^n$.
% or in other ambient spaces
\smallskip

The results presented in this paper are part of the first author's 
PhD thesis in progress.

%%%%%%%%%%%%%%%%%%%%%%%%%%%%%%%%%%%%%%%%%%%%%%%%%%%%%%%%%%%%%%%%%%%%%%%%
\section{Steiner networks}\label{sec:stnetworks}

	\begin{figure}
		\begin{subfigure}[t]{.33\linewidth}
			\centering
			\begin{tikzpicture}[x=.6cm,y=.6cm,rotate=-30]
				\foreach \x in {-1,0,1} {%
					\foreach \y in {-1,0,1} {%
						\begin{scope}[shift = {(\x*1.5,\x*.866+\y*2*.866)}]
							\draw(0,0) -- (-.5,-.866) -- ++(-.4,0);
							\draw(0,0) -- (-.5,.866);
							\draw(0,0) -- (1,0);					
						\end{scope}
					}
				}
				\draw[dashed,fill,fill opacity=.25,color=gray](-.5,-.866) -- ++(1.5,.866) -- ++(0,1.732) -- ++(-1.5,-.866) -- cycle;
				\draw[very thick](-.5,-.866) -- (0,0) -- (-.5,.866) (0,0) -- (1,0);
			\end{tikzpicture}
		\end{subfigure}
		\hfill
		\begin{subfigure}[t]{.3\linewidth}
			\centering
			\begin{tikzpicture}[x=.4cm,y=.4cm,rotate=-19.1]
				\foreach \x in {-1,0,1} {%
					\foreach \y in {-1,0,1} {%
						\begin{scope}[shift = {(\x*2.5+\y*-.25,\x*.866+\y*2.5*.866)}]
							\draw(0,0) -- (-.5,-.866) -- ++(-.4,0);
							\draw(0,0) -- (-.75,1.5 * .866);
							\draw(0,0) -- (2,0);					
						\end{scope}
					}
				}
				\draw[dashed,fill,fill opacity=.25,color=gray](-.5,-.866) -- ++(2.5,.866) -- ++(-.25,2.5 * .866) -- ++(-2.5,-.866) -- cycle;
				\draw[very thick](-.5,-.866) -- (0,0) -- (-.75,1.5 * .866) (0,0) -- (2,0);
			\end{tikzpicture}
		\end{subfigure}
		\hfill
		\begin{subfigure}[t]{.31\linewidth}
			\centering
			\begin{tikzpicture}
				\foreach\x in {0,1}{%
					\foreach\y in {-1,0,1}{%
						\foreach\p in {-1,1}{%
							\begin{scope}[x=.5cm,y=\p*.5cm,xshift=\x*2cm,yshift=\y*.866cm]
								\draw(-.6,0) -- ++(.6,0) -- ++(.5,.866) -- ++(2,0) -- ++(.5,-.866) -- ++(.4,0);
							\end{scope}
						}
					}
				}
				\begin{scope}[x=.5cm,y=.5cm]
					\draw[dashed,fill,fill opacity=.25,color=gray](1.5,-.433) -- ++(4,0) -- ++(0,1.732) -- ++(-4,0) -- cycle;
					\draw[very thick](1.5,.866) -- ++(1,0) -- +(.25,.433) +(0,0) -- ++(.5,-.866) -- +(-.25,-.433) +(0,0) -- ++(1,0) -- +(.25,-.433) +(0,0) -- ++(.5,.833) -- +(-.25,.433) +(0,0) -- ++(1,0);
				\end{scope}
			\end{tikzpicture}
		\end{subfigure}
		\caption{Doubly periodic Steiner networks and their fundamental domains.  The underlying abstract graph of the first two networks is the dipole graph of order~$3$ (cf.~Figure~\ref{fig:hcb}).
The first network, with the hexagonal lattice, minimizes length for given area of its fundamental domain.  The network on the right has the quotient~$D_1\gsquare D_2$.}
		\label{fig:Steinerex2d}
	\end{figure}
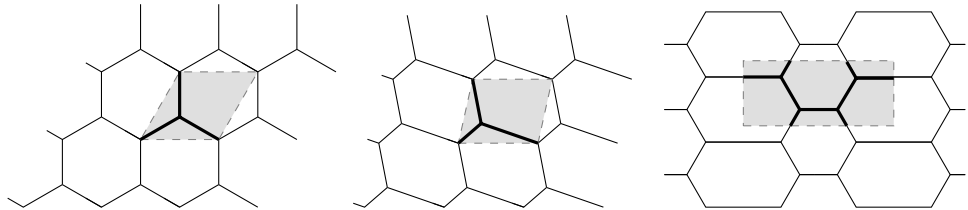
For our purposes, it is convenient to use the term network in the
following sense:
\begin{definition}
	An \emph{($n$-periodic) network $N$} is a connected simple graph, 
% "simple": 
% unweighted, undirected graph containing no graph loops or multiple edges 
immersed with straight edges of positive length into~$\R^n$, subject to the following:
	\begin{itemize}
	\item
	$N$ is invariant under a lattice~$\Lambda$ 
        of rank~$n$.
	\item
	The quotient $N/\Lambda$ is a finite graph~$\Gamma$,
	possibly with loops and multiple edges.  
	\end{itemize}
	We call $V=V(\R^n/\Lambda)$ the \emph{(spanned) volume} of $N$ 
  and $L=L(N/\Lambda)$ the \emph{length} of~$N$.
\end{definition}
% Terminology (see wikipedia Graph_(discrete_mathematics) --> link glossary)
% incident: The relation between an edge and one of its endpoints.
% adjacent: The relation between two vertices that are both endpoints 
% of the same edge.
%
Let us explain our terminology. 
If a graph is mapped injectively to~$\R^n$ 
then we call the map an \emph{embedding}. 
% Since the quotient N/\Lambda is compact 
% an embedding is an injective immersion.
We call it an \emph{immersion} if for each vertex
the restriction to the union of the incident edges 
is injective, i.e., the star of each vertex is embedded. 
% Sunada uses another definition of immersion:
% Merely the directions (say: unit vectors) of the incident edges
% at a vertex must be distinct.
% This allows for an edge to cover itsself, unlike our def.
% However, we rule out selfintersections anyway, so the notion
% of immersedness is purely technical.

A \emph{lattice} (\emph{of rank} $n$) is a set $\Lambda= \{\sum_{i=1}^n a_ig_i : a_i\in\mathbb Z\}\subset\R^n$, where the vectors $g_1,\ldots,g_n\in\R^n$ are linearly independent. The ambient space quotient $\R^n/\Lambda$ is an $n$-dimensional flat torus.  It can be represented by a parallelepiped spanned by the vectors~$g_i$. We refer to the quotient or its representing epiped as a \emph{fundamental domain}. 

Conversely, we can start with an abstract finite graph~$\Gamma$. 
Our networks can then be described as immersions 
of certain abelian coverings of~$\Gamma$.  
For the cases considered in the present paper, 
the first homology group $H_1(\Gamma,\mathbb Z)$
has $n$ closed cycles as generators (that is, the first Betti number is~$n$).
Thus for $N$ to be $n$-periodic, each such cycle must map to a generator 
of the lattice.
See Sunada~\cite{sunada2012topological} for a detailed account of
the covering theory of graphs. 

We wish to minimize the length~$L=L(N/\Lambda)$ 
of the quotient network $N/\Lambda$, 
subject to the constraint that the $n$-dimensional volume $V=V(\R^n/\Lambda)$ 
of a fundamental domain is fixed to~$1$. Equivalently, we may minimize 
the scaling-invariant \emph{length ratio}~$L^n/V$.

Suppose a vertex $p$ of a network is connected with edges to the
three vertices $q_1,q_2,q_3$.  If the network is critical for length
the first variation formula gives 
\begin{align}\label{eq:equilibrium}
	\frac{p-q_1}{\vert p-q_1\vert}+\frac{p-q_2}{\vert p-q_2\vert}
  +\frac{p-q_3}{\vert p-q_3\vert} = 0\,.
\end{align}
Equivalently, the three edges incident to~$p$ meet at $120^\circ$-angles.
We refer to \eqref{eq:equilibrium} 
as the \emph{Steiner condition} or as \emph{balancing}.
\begin{definition}
	A \emph{Steiner network} is an $n$-periodic network 
	where all vertices have degree~$3$ (the network is $3$-\emph{regular})
  and satisfy the Steiner condition~\eqref{eq:equilibrium} at each vertex.
\end{definition}
Steiner networks result from minimizing the length ratio:
\begin{theorem}\label{th:minimizer}
  Among $n$-periodic networks in $\R^n$ with fixed lattice~$\Lambda$, 
  there exists a network minimizing the length~$L$. %ratio~$L^n/V$. 
  Any such minimizing network~$N$ is an embedded Steiner network 
  such that $N/\Lambda$ has $2n-2$~vertices,
  up to vertices of degree~$2$ with opposite incident edges. 
\end{theorem}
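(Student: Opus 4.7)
The plan is to establish existence first and then derive the structural properties from minimality. Given a minimizing sequence $(N_k)$ with $L(N_k)\to\inf L$, I first reduce the combinatorial complexity of each $\Gamma_k=N_k/\Lambda$ without increasing~$L$. If $b_1(\Gamma_k)>n$ then the kernel of the holonomy map $H_1(\Gamma_k,\mathbb Z)\to\Lambda$ is nonzero, hence some cycle of $\Gamma_k$ has trivial holonomy; removing one of its edges keeps $\Gamma_k$ connected and the holonomy surjective onto~$\Lambda$ while strictly shortening $N_k$, and iteration drives $b_1$ down to exactly~$n$. Any vertex of degree $\leq 2$ is then removed by merging its incident edges, which strictly decreases $L$ unless the two edges are collinear. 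Combined with $b_1=n$, the handshake inequality $3|V(\Gamma_k)|\leq 2|E(\Gamma_k)|$ gives $|V(\Gamma_k)|\leq 2n-2$ and $|E(\Gamma_k)|\leq 3(n-1)$. Because every edge has length at most $L(N_1)$, its holonomy in $\Lambda$ is a lattice vector of bounded norm, leaving only finitely many labelled combinatorial types along the sequence. Fixing one such type, the length is a continuous function of the vertex positions on the compact torus $(\R^n/\Lambda)^{|V|-1}$, and a subsequence converges to a minimizer~$N$.

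At the minimizer, the first-variation formula at every vertex is $\sum_i u_i=0$, where the $u_i$ are the outgoing unit vectors; this is~\eqref{eq:equilibrium}. At a vertex of degree $k\geq 4$, expanding $0=\bigl|\sum u_i\bigr|^2$ gives $\sum_{i<j}u_i\cdot u_j=-k/2$; were every pairwise product at most $-1/2$, the sum would not exceed $-k(k-1)/4<-k/2$ for $k\ge 4$, a contradiction. Hence some pair of incident edges encloses an angle strictly less than~$120^\circ$, and the classical planar Steiner insertion of a degree-$3$ vertex between them strictly shortens $N$, contradicting minimality. So $N$ is $3$-regular and \eqref{eq:equilibrium} holds at each vertex.

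Embeddedness is then handled by the same local improvement: a transversal crossing at an interior point~$X$ realises $X$ as a degree-$4$ node whose length exceeds the corresponding Steiner minimum on the four terminals, so replacing the crossing locally and propagating the replacement periodically contradicts minimality. Finally, $3$-regularity yields $|E(\Gamma)|=\tfrac32|V(\Gamma)|$, and $b_1(\Gamma)=|E(\Gamma)|-|V(\Gamma)|+1=n$ then forces $|V(\Gamma)|=2n-2$, modulo the geometrically trivial degree-$2$ vertices with collinear incident edges mentioned in the statement.

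The step I expect to be most delicate is the complexity reduction in the existence argument: one must verify that the cycle-cutting and vertex-merging operations respect the \emph{simple graph} condition on $N$ itself (loops and multi-edges being allowed only in the quotient), that the holonomies remain bounded along the minimising sequence, and that the uncrossing step produces a well-defined $\Lambda$-periodic immersion of some abstract graph. The Steiner insertion and the four-terminal improvement for crossings are by contrast essentially classical and purely local.
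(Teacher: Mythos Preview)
Your strategy matches the paper's: reduce a minimizing sequence to bounded combinatorics, extract a limit by compactness, then eliminate unwanted features of the limit by local length-decreasing moves. The implementation differs in two places. The paper reduces the circuit rank by building \emph{up} from a spanning tree (Lemma~\ref{le:circuitrank}) rather than cutting edges, and it drives the sequence all the way to $3$-regularity \emph{before} the limit (Lemma~\ref{le:impossibledegrees}), whereas you only enforce degree~$\ge 3$ along the sequence and defer the exclusion of degree~$\ge 4$ to the minimizer. The latter reordering is harmless once you observe that your averaging argument at a degree-$k$ vertex needs only $\bigl|\sum_i u_i\bigr|^2\ge 0$, not $\sum_i u_i=0$; this matters because at a vertex produced by a collapsed edge the first variation is unavailable.

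The rank-reduction step, however, has a real gap (you are right to single it out). Deleting an edge that occurs in a kernel element of the holonomy map need not keep the holonomy surjective onto~$\Lambda$: if the edge carries coefficient~$a$ in that kernel element you only obtain $a\cdot g$ in the image generated by the remaining cycles, not $g$ itself. Concretely, a quotient consisting of one vertex with three loops of holonomy $(1,0)$, $(1,2)$, $(0,3)$ is a valid $2$-periodic network with $b_1=3$, the kernel is generated by $(3,-3,2)$, and removing any one loop drops to a proper sublattice of~$\mathbb Z^2$. The paper's spanning-tree construction has exactly the same defect (the rank-$n$ lattice it produces may sit properly inside~$\Lambda$), and the paper patches it via the remark following the proof: one temporarily allows disconnected $\Lambda$-invariant competitors and then argues that a minimizer must be connected, since otherwise translating one component to meet another creates a degree~$\ge 4$ vertex. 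You need the same device. Finally, for embeddedness the paper also treats the case where two edges overlap along a segment, not only a transversal crossing; that case should be added to your argument.
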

\begin{remark*}
If a minimizer has vertices of degree $2$ then the incident edges must 
be opposite, and so such vertices can always be removed without changing~$L$.
From now on we assume this is the case.
\end{remark*}
For the proof, we need the notion of the \emph{circuit rank} 
% also called cyclomatic number, circuit number
of a connected finite graph~$\Gamma$,
\[
  \rank\Gamma:= 1 -\#\text{vertices of $\Gamma$} +\#\text{edges of $\Gamma$}\,.
\]
Note that a tree has circuit rank~$0$; that for any connected graph
the circuit rank is a non-negative integer; 
and that for a $3$-regular graph we have
\begin{align}\label{eq:rankfordegreethree}
  \rank \Gamma = 1 + \frac12\, \#\text{vertices of $\Gamma$} .
\end{align}
The circuit rank is precisely 
the number of generators of $H_1(N/\Lambda,\mathbb Z)$.
% also called fundamental cycles.
To verify this, consider a spanning tree $T\subset N/\Lambda$ 
of a quotient network $N/\Lambda$, so that $H_1(T,\mathbb Z)$ is trivial.
Reinsert the edges one by one to see that both the circuit rank of~$T$, 
as well as the number of generating cycles in~$T$,
increases by~$1$ in each step.

For a network~$N$, we define the circuit rank 
as the rank of its quotient, $\rank N:=\rank(N/\Lambda)$.
The following two lemmas serve to show that we can assume
a minimizing sequence of networks to 
have rank~$n$ and be $3$-regular.

\begin{lemma} \label{le:circuitrank}
  Let $N$ be an $n$-periodic network with lattice~$\Lambda\subset\R^n$.
  If $\rank N>n$ then there exists an $n$-periodic network $N'$,
  also with lattice $\Lambda$, 
  which has smaller length, $L(N')<L(N)$, and $\rank N'=n$.
\end{lemma}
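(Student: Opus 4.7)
The plan is to reduce the circuit rank of the quotient $\Gamma := N/\Lambda$ by one at a time, keeping the intermediate networks connected and $\Lambda$-invariant while strictly decreasing the total length at each step. Iterating $\rank N - n$ times then produces the desired $N'$.

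For a single reduction step, I would introduce the holonomy homomorphism $\rho\colon H_1(\Gamma,\mathbb{Z})\to\Lambda$, sending a $1$-cycle $c$ to the net $\R^n$-displacement of any lift of $c$ to $N$; connectedness of $N$ forces $\rho$ to be surjective, so its kernel $K$ is free abelian of rank $\rank\Gamma-n\geq 1$. Pick any nonzero $\beta\in K$; since $\partial\beta=0$, a standard component-count argument on a hypothetical bridge-cut shows that every edge $e$ occurring in $\beta$ with nonzero coefficient $n_e$ lies on some cycle of $\Gamma$, and in particular is not a bridge. Setting $N':=N\setminus(\Lambda\text{-orbit of }e)$ and $\Gamma':=\Gamma\setminus e$, one obtains a $\Lambda$-invariant subgraph with finite quotient $\Gamma'$, where $\Gamma'$ is still connected, $\rank\Gamma'=\rank\Gamma-1$, and $L(N')=L(N)-L(e)<L(N)$. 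The delicate point is that $N'$ must also be connected: by covering theory this is equivalent to $\rho|_{H_1(\Gamma',\mathbb{Z})}$ being surjective onto $\Lambda$. Choosing a spanning tree of $\Gamma$ avoiding $e$ splits $H_1(\Gamma,\mathbb{Z})=H_1(\Gamma',\mathbb{Z})\oplus\mathbb{Z}c_e$; writing $\beta=\alpha+n_e c_e$ with $\alpha\in H_1(\Gamma',\mathbb{Z})$ gives the relation $n_e\,\rho(c_e)=-\rho(\alpha)\in\rho(H_1(\Gamma',\mathbb{Z}))$, so provided $n_e=\pm 1$ the image equals $\Lambda$ and $N'$ is connected.

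The main obstacle I foresee is precisely securing $n_e=\pm 1$. Although $K$ is a saturated subgroup of $H_1(\Gamma,\mathbb{Z})$ (since the quotient $\Lambda$ is torsion-free), its coordinate projections onto individual edges need not surject onto $\mathbb{Z}$; a toy illustration is the primitive rank-one sublattice $\mathbb{Z}\cdot(2,3)\subset\mathbb{Z}^2$. In such worst cases single-orbit deletion splits $N$ into several $\Lambda$-translates, and a refinement is needed, for instance augmenting $N$ first with a short auxiliary edge-orbit whose $\rho$-image is a primitive vector of $\Lambda$, so that the enlarged kernel acquires a $\pm 1$ coefficient on some long original edge; deleting that long edge then yields a net length decrease together with a rank drop of one. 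Once a reduction step is established in full generality, the iteration finishing the lemma is immediate.
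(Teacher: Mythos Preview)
Your subtractive strategy (delete one edge orbit at a time) differs from the paper's.  The paper argues additively: fix a spanning tree $G_0$ of $\Gamma=N/\Lambda$ and add edges $e_1,\dots,e_n$ one by one, each chosen so that the connected lift $N_i\subset N$ of $G_i:=G_0\cup\{e_1,\dots,e_i\}$ is periodic under a rank-$i$ sublattice; then $N':=N_n$.  This bypasses all of your bookkeeping about bridges, kernels, and $\pm1$ coefficients.

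The obstacle you flag is real, and your auxiliary-edge fix does not close it: adding an edge orbit raises the circuit rank by one and deleting one lowers it by one, so the combined move leaves the rank unchanged---the promised ``rank drop of one'' does not occur.  A concrete obstruction to the basic step: for $n=2$, $\Lambda=\mathbb Z^2$, let $\Gamma$ be a single vertex with three loops of holonomy $(2,1)$, $(3,-1)$, $(0,1)$.  These generate $\mathbb Z^2$, so $N$ is a valid connected immersed network, yet no two of them generate $\mathbb Z^2$ (the three $2\times2$ determinants are $-5$, $2$, $3$), and the kernel $K=\mathbb Z\cdot(3,-2,-5)$ has no entry equal to $\pm1$.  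Hence no single deletion yields a connected $N'$ with lattice~$\Lambda$.  Observe that the paper's additive argument has the very same blind spot on this example: whichever two loops one keeps, the lattice of $N_2$ is a proper sublattice of~$\Lambda$, so the clause ``also with lattice~$\Lambda$'' is not literally verified there either.  The honest repair---for both routes---is to weaken the conclusion to ``some rank-$n$ sublattice of~$\Lambda$'' (equivalently, to allow $N'$ to be disconnected, cf.\ the remark after Theorem~\ref{th:minimizer}); this is all the application requires.
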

\begin{proof}
	We construct graphs $G_0\subset\cdots\subset G_n$ such that $\rank G_i=i$.
	The graph $G_0$ is chosen as a spanning tree 
  of the quotient network $N/\Lambda$,
  while for $i=1,\dots,n$ the graph $G_i$ is the union of~$G_{i-1}$ 
	with a single edge $e_i\in (N/\Lambda)\setminus G_{i-1}$, 
  subject to the requirement that $G_i$ lifts to a network $N_i\subset N$
  with a lattice of rank~$i$.   
  Then $N':=N_n$ has circuit rank~$n$ and is $n$-periodic.
  Since $\rank N>n$, the network~$N'$ 
	has fewer edges than~$N$, and so $L(N')<L(N)$.
\end{proof}

\begin{lemma} \label{le:impossibledegrees}
  Suppose an $n$-periodic network $N$ with lattice $\Lambda$ contains a 
  vertex with degree $d\geq 4$ or $d=1$,
  or with degree~$d=2$ and non-opposite edges.
  Then there exists an $n$-periodic network~$N'$ with lattice~$\Lambda$
  and $\rank N'=\rank N$, such that $N'$ is $3$-regular 
  with smaller length, $L(N')<L(N)$.
\end{lemma}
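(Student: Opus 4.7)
The plan is to handle each of the three types of bad vertices by a $\Lambda$-equivariant local modification that strictly decreases the length while keeping the lattice and the circuit rank $\rank N$ fixed. Iterating such moves terminates in a $3$-regular network $N'$ with the required properties.

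For a vertex $p$ of degree~$1$, I would simply delete $p$ together with its single incident edge and all $\Lambda$-translates. The length strictly decreases by the positive length of the removed edge, the quotient loses one vertex and one edge so that $\rank = 1 - \#V + \#E$ is unchanged, and connectedness persists because a leaf is never a cut vertex. For a vertex $p$ of degree~$2$ with non-opposite edges $pq_1, pq_2$, I would remove $p$ and these two edges and insert the straight segment $q_1q_2$; the strict triangle inequality $|q_1q_2|<|pq_1|+|pq_2|$ supplies the length decrease, and both vertex and edge counts again drop by one.

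The substantial case is $d\geq 4$. The key geometric input is that among the $d$ outgoing unit edge-vectors $v_1,\dots,v_d$ at $p$ some two satisfy $\angle(v_i,v_j)<120^\circ$: otherwise every pairwise cosine is at most $-1/2$ and
\[
0\leq\Bigl|\sum_{i=1}^d v_i\Bigr|^2 \;=\; d+2\!\!\sum_{i<j}\cos\angle(v_i,v_j) \;\leq\; d-\binom{d}{2},
\]
which is negative for $d\geq 4$. Picking such a pair $pq_1,pq_2$, I would perform a Steiner splitting: insert a new vertex $s$ close to $p$ in the direction of the sum of the two unit vectors, remove the edges $pq_1,pq_2$, and add the three edges $sq_1, sq_2, sp$. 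Because the angle at $p$ is strictly less than~$120^\circ$, a first-order computation (equivalently, the planar Fermat--Steiner inequality applied to the triangle with vertices $q_1,q_2,p$) shows that for sufficiently small $|sp|$ the total length strictly decreases. The new vertex $s$ has degree~$3$, the degree of $p$ drops by one, and the quotient gains one vertex and one edge, so circuit rank is preserved.

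The non-negative integer $\sum_{p}|\deg p - 3|$ taken over quotient vertices strictly decreases under each of the three moves, so the process terminates at a $3$-regular $N'$ with $\rank N' = \rank N$ and $L(N')<L(N)$, all with the original lattice~$\Lambda$. The main technical obstacle I anticipate is preserving simplicity of the ambient graph $N$: the new segment $q_1q_2$ in the degree-$2$ reduction could duplicate an existing edge of $N$, and the inserted Steiner point $s$ could accidentally coincide with, or create a parallel edge to, a $\Lambda$-translate of another vertex. Both pathologies can be ruled out by choosing the perturbation small compared to the injectivity radius determined by $\Lambda$ and the finite quotient, but arranging the choices so that successive iterations do not interfere is the bookkeeping step I would plan for last.
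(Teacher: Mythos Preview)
Your approach is the paper's: prune leaves, shortcut non-straight degree-$2$ vertices, and split high-degree vertices by inserting a Steiner tripod, iterating until only degree-$3$ vertices remain. Your inequality $\bigl|\sum v_i\bigr|^2\le d-\binom d2$ showing that $d\ge4$ unit vectors cannot all make pairwise angles $\ge120^\circ$ is a clean self-contained addition; the paper merely asserts this with a reference to~\cite{minimalnetworks}.

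Two small points where the paper is more explicit than your sketch. First, the immersion failure you flag at the end cannot always be cured by a generic small perturbation: the degree-$2$ shortcut $q_1q_2$ has no free parameter, so if that segment already occurs as an edge there is nothing to perturb. The paper's fix is different---whenever two edges leave a common vertex in the same direction, it merges their common initial portion into a single edge, which strictly reduces length and restores the immersion property. Second, your three moves do not touch degree-$2$ vertices with \emph{opposite} incident edges, yet such vertices can be created along the way (for instance when a removed leaf had a degree-$3$ neighbour); the paper disposes of them by a final length-preserving merge. With these two bookkeeping steps your argument coincides with the paper's.
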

\begin{proof}
  Clearly we can decrease length 
  by successively removing all vertices of degree~$d=1$ from~$N/\Lambda$
  together with their incident edges.
  If the resulting network contains a vertex of degree~$2$
  with non-opposite edges we can replace these edges 
  by a single edge to reduce length.

  If the resulting graph contains a vertex~$p$ 
  of degree~$d\geq4$ we use a well-known argument to reduce length
  (see for instance~\cite[{p.\,120f.}]{minimalnetworks}). 
  The star at $p$ contains two (non-collinear) edges with endpoints $q_1,q_2$
	which make an angle of less than $120$~degrees.
  To define~$N'$, 
  we replace these two edges in $N/\Lambda$ with a tripod which connects
	the triple $p,q_1,q_2$ with a further point in the same plane,
  % which can be, for instance, the Steiner point of the triple.
  chosen such that the length decreases.
  Thereby the degree at~$p$ changes from $d$ to~$d-1$.
  Upon iteration we can reduce the degree to~$d\leq3$ at all vertices
  of~$N/\Lambda$.

  Our operations preserve $n$-periodicity.  However, 
  for $d=2$ and $d\geq4$ they possibly do not preserve the immersion property.
  So assume incident to~$p$ there are two or more edges in the same direction.
  We replace the initial portion, up to the first vertex,
  by a single edge, thereby reducing length.
  Iteration of this construction 
  yields a network of shorter length, all of its stars are embedded.

  Finally, we merge opposite edges incident to a vertex of degree~$2$ 
  to form a single edge, leaving $L$ unchanged,
  and let $N'$ be the resulting network.
  Note that all our operations preserve the circuit rank.
  Hence $N'$ has the properties claimed.
\end{proof}

\sloppypar
\begin{proof}[Proof of the theorem]
  Consider a length minimizing sequence $(N_k)$,
  that is, $\lim_{k\to\infty} L(N_k)=\inf L(N)$, where the infimum is
  taken over all $n$-periodic networks with fixed lattice~$\Lambda$.
  
  Applying Lemma \ref{le:circuitrank} we may assume that $\rank N_k=n$,
  and applying Lemma~\ref{le:impossibledegrees} thereafter we may assume
  $N_k$ still has $\rank N_k=n$ but is $3$-regular.
  By~\eqref{eq:rankfordegreethree} 
  the number of vertices then is~$2n-2$.
  Combinatorially, there are only finitely many such graphs.
  Thus by passing to a subsequence we can assume 
  all $N_k/\Lambda$ have the same combinatorial type~$\Gamma$.

  The set of $2n-2$ vertices of $N_k/\Lambda$ is compact in~$\R^n/\Lambda$,
  and the connecting edges are geodesics with uniformly bounded length.
  Hence vertices and edges of a further subsequence of~$(N_k)$
  converge to a limit~$N$.
  We claim that all edges of~$N$ attain positive length.
  To see this, note that if an edge attains length~$0$, 
  then $N$ has a vertex with degree~$d\geq4$.
  By Lemma~\ref{le:impossibledegrees}, however, $N$ cannot be a minimizer of 
  length over all combinatorial types,
  contradicting the fact that $(N_k)$ is a minimizing sequence.  

  Let us show $N$ is embedded. 
  Suppose two edges intersect in an interval of positive length. 
  Then comparison with a network where the intersection set is replaced
  by a single edge shows that $N$ cannot be minimizing.
  Similarly, supposing $N$ has an isolated point of intersection 
  we compare $N$
  with a network where this point is a vertex of degree~$d\geq4$.
  Then $N$ cannot minimize by Lemma~\ref{le:impossibledegrees}.
% Note: If two or more edges intersect at interior points, 
% then this means to split the edges by introducing a new vertex,
% necessarily of degree at least 4.
% If the intersection point includes at least one vertex,
% then splitting edges which contain this point, or merging 
% this vertex with the other vertices at the same location 
% results in a degree at least 5 (= 3+2).
  Finally, since $N$ is a minimizer with positive edge lengths 
  the first variation formula~\eqref{eq:equilibrium} shows $N$ is Steiner.
\end{proof}

\begin{remark*}
The proof indicates that our results do not change 
if we drop the connectivity assumption in the definition of
$n$-periodic networks (but still require that the cycles of the underlying 
possibly disconnected graph span a lattice of rank~$n$).
Indeed, if a minimizer was disconnected, we could use translations to
move one component as to intersect the other. 
Again this contradicts Lemma~\ref{le:impossibledegrees}.
\end{remark*}

We now show that Steiner networks cannot contain loops, 
thereby constraining the combinatorial types further.
For instance the graphs shown in Figure~\ref{fig:graphswithloops} 
are impossible for networks critical for length with~$n=3$.
\begin{lemma}\label{lem:Steinerloop}
  Let $N\subset\R^n$ be an $n$-periodic Steiner network.  
  Then $N/\Lambda$ contains no loops.
\end{lemma}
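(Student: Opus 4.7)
My plan is to derive a contradiction from the Steiner condition by lifting the hypothetical loop to $\R^n$. Suppose that $N/\Lambda$ contains a loop based at some vertex~$p$. Fix a lift $\tilde p\in N\subset\R^n$. By the definition of the quotient, a loop at $p$ corresponds to an edge in $N$ joining $\tilde p$ to $\tilde p+\lambda$ for some nonzero lattice vector $\lambda\in\Lambda$. By $\Lambda$-invariance, the translate by $-\lambda$ of this edge is also an edge of~$N$, namely the straight segment from $\tilde p-\lambda$ to $\tilde p$. So two of the three edges incident to~$\tilde p$ are the collinear segments $[\tilde p,\tilde p+\lambda]$ and $[\tilde p-\lambda,\tilde p]$, with unit tangent vectors $\lambda/|\lambda|$ and $-\lambda/|\lambda|$ pointing out of~$\tilde p$.

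The Steiner condition~\eqref{eq:equilibrium} at~$\tilde p$ forces the three outgoing unit vectors to sum to~$0$. Since the two contributions from the loop already cancel, the third unit vector would have to vanish, which is impossible. This is the contradiction, and it establishes the lemma.

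I should double-check one point of bookkeeping, namely that the two edges $[\tilde p-\lambda,\tilde p]$ and $[\tilde p,\tilde p+\lambda]$ are genuinely distinct edges of~$N$ (not the same edge counted twice), so that they really account for two of the three half-edges at~$\tilde p$. This is immediate: they are different translates of a single edge under the action of~$\Lambda$, and they coincide only if $\lambda=-\lambda$, i.e.\ $\lambda=0$, which is excluded. I do not foresee a genuine obstacle here; the main conceptual point is simply the translation from the combinatorial statement ``loop in the quotient'' to the geometric statement ``two collinear incident edges at the lifted vertex'', after which the Steiner balancing condition finishes the argument in one line.
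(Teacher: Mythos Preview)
Your argument is correct and follows essentially the same route as the paper's proof: lift the loop, observe that the two incident edges at the lifted vertex point in opposite directions along the lattice vector, and note that this contradicts the balancing condition. The paper compresses all of this into two sentences, while you spell out explicitly why the two lifted half-edges are opposite (via $\Lambda$-invariance) and why balancing then fails; your added bookkeeping that the two edges are genuinely distinct is a nice touch but not strictly needed.
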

\begin{proof}
  A loop based at a vertex $p\in N/\Lambda$ corresponds to a straight edge 
  of the lift~$N$.  Thus the lift contains a vertex with opposite incident edges,
  thereby contradicting balancing.
\end{proof}
With Proposition~\ref{prop:Steinerdouble} we will derive yet another 
constraint on the combinatorial graph of a length minimizing Steiner network:
it must be simple, i.e., it cannot contain double edges.
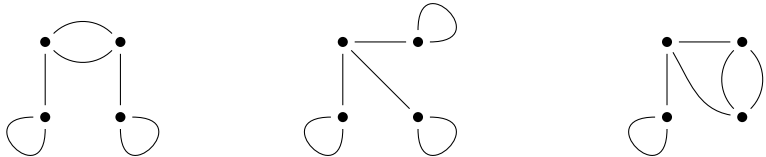
\begin{figure}
	\hspace*{\fill}
	\begin{subfigure}[t]{.26\linewidth}
		\centering
		\begin{tikzpicture}
			\coordinate[dot1](p0) at (0,0);
			\coordinate[dot1](p1) at (1,0);
			\coordinate[dot1](p2) at (1,1);
			\coordinate[dot1](p3) at (0,1);
			\draw(p0) to [out=180,in=-90,looseness=10] (p0);
			\draw(p1) to [out=-90,in=0,looseness=10] (p1);
			\draw(p3) to [out=-45,in=225] (p2);
			\draw(p3) to [out=45,in=135] (p2);
			\draw(p0) -- (p3) (p1) -- (p2);
		\end{tikzpicture}
	\end{subfigure}\hfill
	\begin{subfigure}[t]{.26\linewidth}
		\centering
		\begin{tikzpicture}
			\coordinate[dot1](p0) at (0,0);
			\coordinate[dot1](p1) at (1,0);
			\coordinate[dot1](p2) at (1,1);
			\coordinate[dot1](p3) at (0,1);
			\draw(p0) to [out=180,in=-90,looseness=10] (p0);
			\draw(p1) to [out=-90,in=0,looseness=10] (p1);
			\draw(p2) to [out=0,in=90,looseness=10] (p2);
			\draw(p0) -- (p3);
			\draw(p1) -- (p3);
			\draw(p2) -- (p3);
		\end{tikzpicture}
	\end{subfigure}\hfill
	\begin{subfigure}[t]{.26\linewidth}
		\centering
		\begin{tikzpicture}
			\coordinate[dot1](p0) at (0,0);
			\coordinate[dot1](p1) at (1,0);
			\coordinate[dot1](p2) at (1,1);
			\coordinate[dot1](p3) at (0,1);
			\draw(p0) to [out=180,in=-90,looseness=10] (p0);
			\draw(p0) -- (p3) (p3) -- (p2);
			\draw(p3) to [out=-60,in=170] (p1);
			\draw(p1) to [out=45,in=-45] (p2);
			\draw(p1) to [out=135,in=-135](p2);
		\end{tikzpicture}
	\end{subfigure}\vspace*{-.5cm}\hspace*{\fill}
	\caption{All connected $3$-regular graphs with loops on $4$~vertices. By Lemma~\ref{lem:Steinerloop}, none of these graphs can be the quotient graph of a minimizer.}
    \label{fig:graphswithloops}
\end{figure}
\begin{remark*}
	The combinatorial graph of a minimizer may depend on the lattice. 
  Indeed, as a result of \cite{higherdegree} 
  triply periodic networks with two degree-$5$~vertices which minimize length
  have different combinatorial types for different prescribed lattices.
% The minimizer over all lattices is~\bnn, 
% while if the lattice is fixed to coincide with the lattice of 
% the length minimizing \sqp~network, 
% then in fact \sqp\ (with a different quotient graph) 
% is the absolute minimizer. 
%% Its combinatorial graph is~$D_{1,3}$. 
  However, for the Steiner case, the homotopy of Theorem~\ref{th:homotopy}
  implies that a minimizer for a fixed lattice always has $K_4$ as a quotient.
\end{remark*}

%%%%%%%%%%%%%%%%%%%%%%%%%%%%%%%%%%%%%%%%%%%%%%%%%%%%%%%%%%%%%%%%%%%
\section{Maclaurin's inequality for elementary symmetric polynomials}

In the cases we will consider, the volume $V$ of a given network $N$ 
with $m$ labelled edges of length $(x_1,\ldots,x_m)$ 
is a polynomial $P(x_1,\ldots,x_m)$.
Thus the task to minimize the quotient $L^3/V$ is equivalent to maximizing the polynomial~$P$ under the length constraint $L=1$,
where $L(x_1,\ldots,x_m):=x_1+\ldots+x_m$.

In the most symmetric case, 
$P$ is the \emph{elementary symmetric polynomial} of degree~$k$,
\[P_k(x_1,\ldots,x_m) := \sum_{1\leq j_1<\ldots<j_k\leq m}x_{j_1}\cdots x_{j_k}\,,\qquad 1\leq k\leq m\,.\]
We also set $P_0(x_1,\ldots,x_m):=1$.  We can estimate these polynomials 
by the length:
\begin{lemma}[Maclaurin's inequality]\label{le:elsympol}
	If $x_i\geq0$ for all $1\leq i\leq m$ and $k\geq2$ then
	\begin{align}
		P_k(x_1,\ldots,x_m)\leq\binom mk\Bigl(\frac{x_1+\ldots+x_m}m\Bigr)^k\,,\label{eq:maclaurin}
	\end{align}
	where equality holds if and only if $x_1 = \ldots = x_m$.
\end{lemma}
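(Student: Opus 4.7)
The plan is to prove the inequality by a symmetrization argument, extremizing $P_k$ on the compact simplex $\Delta = \{x \in \R^m : x_i \geq 0,\; x_1 + \ldots + x_m = L\}$ for fixed $L > 0$. By continuity and compactness, $P_k$ attains a maximum at some $x^* \in \Delta$, and the goal is to show that the unique such maximizer is the centroid $(L/m, \ldots, L/m)$, at which $P_k$ evaluates to $\binom{m}{k}(L/m)^k$.

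The main tool is the pair-separation identity
\[
P_k(x_1, x_2, x_3, \ldots, x_m) = A\, x_1 x_2 + B\,(x_1 + x_2) + C,
\]
with $A = P_{k-2}(x_3, \ldots, x_m)$, $B = P_{k-1}(x_3, \ldots, x_m)$, and $C = P_k(x_3, \ldots, x_m)$, using the conventions $P_0 \equiv 1$ and $P_j \equiv 0$ whenever $j$ exceeds the number of arguments. For fixed $x_1 + x_2$, the product $x_1 x_2$ is uniquely maximized when $x_1 = x_2$. Hence whenever $A > 0$ and $x_1 \neq x_2$, replacing $(x_1, x_2)$ by their common average strictly increases $P_k$ while keeping $x$ in $\Delta$; the analogous smoothing applies to any pair of coordinates.

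From here I would argue by contradiction. Suppose the maximizer $x^*$ is not the centroid. If fewer than $k$ coordinates of $x^*$ are positive, then $P_k(x^*) = 0$, which is strictly less than the value $\binom{m}{k}(L/m)^k$ realized at the centroid, contradicting maximality. Otherwise $x^*$ has at least $k$ positive coordinates but is not the centroid, so I can select a pair $x_i^* \neq x_j^*$, allowing the possibility of pairing a positive entry with a zero entry. The remaining $m - 2$ coordinates then include at least $k - 2$ positive entries, so the coefficient $A$ for this pair is strictly positive. A smoothing step then strictly increases $P_k$, again contradicting maximality. Hence $x^*$ is the centroid, which simultaneously yields the inequality and the equality characterization.

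The step I expect to require the most care is the subcase in which $x^*$ has $p < m$ equal positive coordinates and $m - p$ zero coordinates. Smoothing between a positive coordinate and a zero coordinate then provides the required strict improvement, but only when $p \geq k$, so that at least $k - 2$ positive coordinates remain after the pair is removed. When $p < k$ the configuration evaluates to zero and is dominated by the centroid. With this bookkeeping the optimization on $\Delta$ is complete, and Maclaurin's inequality together with its equality case follows.
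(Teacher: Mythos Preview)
Your argument is correct. Both you and the paper reduce to maximizing $P_k$ on the simplex $\{L=\text{const}\}$ by compactness, and both exploit the same pair-separation identity $P_k = A\,x_ix_j + B\,(x_i+x_j) + C$. The difference lies in how the maximizer is pinned down. The paper proceeds by induction on~$m$: the induction hypothesis gives strict inequality on the boundary face $x_m=0$ (via a binomial-coefficient comparison), so the maximizer is interior, and then the Lagrange condition $\partial_iP_k=\partial_jP_k$ combined with the identity forces $z_i=z_j$. You instead use the identity variationally: whenever $A>0$ and $x_i\neq x_j$, averaging the pair strictly increases $P_k$, so the maximizer cannot have two unequal coordinates once you have verified that $A>0$ for a suitable pair. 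Your case analysis on the number $p$ of positive coordinates (with the $p<k$ case dispatched because $P_k$ then vanishes) replaces the paper's induction and its boundary estimate. Your route is more elementary---no calculus, no induction---while the paper's Lagrange-multiplier formulation has the virtue of matching the method used later for the \ths{} and \srs{} volume polynomials, where no such clean smoothing is available.
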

In particular, for degree $k\geq2$ the elementary symmetric polynomial $P_k$ takes its maximum under the length constraint $L=1$ 
exactly at $(\frac1m,\ldots,\frac1m)$. 
One way to prove Maclaurin's inequality is to use Newton's inequality,
see~\cite{hardy1952inequalities}.
We present a more direct proof here, inspired by our application.
\begin{proof}
	We prove \eqref{eq:maclaurin} by induction over $m$. The base case is $m=k$, where $P_k = x_1\cdots x_m$. Then \eqref{eq:maclaurin} is the estimate on geometric and arithmetic mean.
	
	For the step suppose $m>k\geq2$. 
We claim \eqref{eq:maclaurin} holds strictly if some but not all $x_i$ vanish.  In view of the symmetry of~\eqref{eq:maclaurin} we may assume $x_m=0$. Note that $P_k(x_1,\ldots,x_{m-1},0)$ is an elementary symmetric polynomial of degree $k$ in $m-1$ variables, and so the induction hypothesis gives
	\begin{align*}
		P_k(x_1,\ldots,x_{m-1},0) \leq \binom{m-1}k\Bigl(\frac{x_1+\ldots+x_{m-1}}{m-1}\Bigr)^k\,.
	\end{align*}
	We estimate the right hand side, using the strict inequality
\[
  \binom{m-1}k\frac1{(m-1)^k} 
  = \frac1{k!}\,\frac{m-1}{m-1}\cdots\frac{m-k}{m-1}
  \,< \,\frac1{k!}\, \frac mm\cdots\frac{m-k+1}m = \binom mk\frac1{m^k}\,.
\]
% The sum contains at least two terms, and strict inequality holds 
% for all factors, except for the first:
% Indeed, (m-j)/(m-1) < (m-j+1)/m  <=> m^2-mj < m^2-mj+m -m+j-1  ok for j\ge 2,
If not all $x_i$ vanish this yields strict inequality in~\eqref{eq:maclaurin},
as claimed.
	
Since~\eqref{eq:maclaurin} is scaling invariant, 
it is sufficient to prove this inequality under the length constraint~$L=1$. 
The continuous function $P_k$ attains a maximum over the compact set $L^{-1}(1)\subset[0,\infty)^m$ at some point $z=(z_1,\ldots,z_m)$.
Note that $z\not=0$, that
we have equality in~\eqref{eq:maclaurin} for $x=(\frac1m,\ldots,\frac1m)$,
and that the induction hypothesis, in form of the claim,
gives strict inequality in~\eqref{eq:maclaurin}
on $\partial([0,\infty)^m)\setminus\{0\}$.
% In fact, at the origin 0, we also have equality
Thus $z$ must be an interior point of $[0,\infty)^m$. 
Since $z$ is critical for~$P_k$ under the smooth constraint~$L=1$
we obtain the necessary condition
\begin{align} \label{lagrangeforPk}
  \nabla P_k(z) = \lambda\nabla L(z)=\lambda(1,\ldots,1)\,
\end{align}
with $\lambda\in\R$ a Lagrange multiplier.
It remains to show this implies $z_1=\ldots=z_m$. 
Then since $z$ assigns equality to~\eqref{eq:maclaurin} and $z$ was chosen maximally, 
the proof of the induction step is completed.

Since $P_k$ is elementary symmetric, 
for $i\neq j$ we can express $P_k$ at any point $x=(x_1,\ldots,x_m)$ as
\[P_k(x) = x_ix_jQ_0(x) + (x_i+x_j)Q_1(x) + Q_2(x)\,,\]
where $Q_0,Q_1,Q_2$ are polynomials in $m-2$ variables,
independent of $x_i$ and~$x_j$. 
From \eqref{lagrangeforPk} we conclude 
$\partial_iP_k(z) = \partial_jP_k(z)$ for all $1\leq i,j\leq m$, so that
	\[z_jQ_0(z) + Q_1(z) = z_iQ_0(z) + Q_1(z)\,.\]
Moreover, since $k\geq2$ and $z_i>0$ for all~$i$, the polynomial $Q_0$ cannot vanish at~$z$. Thus indeed $z_i=z_j$ for all~$i,j$. 
\end{proof}

%%%%%%%%%%%%%%%%%%%%%%%%%%%%%%%%%%%%%%%%%%%%%%%%%%%%%%%%%%%%%%%%%%%
\section{Doubly periodic Steiner networks}\label{sec:dimension2}

We find it instructive to present the case of dimension $n=2$ 
before studying the more involved case $n=3$. 
We first determine the topology of the quotient graph
of a minimizer for prescribed lattice.
By Theorem~\ref{th:minimizer} it has $2$~vertices,
% after removal of unnecessary degree~$2$ vertices - standing assumption!
and by Lemma~\ref{lem:Steinerloop} it has no loops. 
% regular graph: same degree at each vertex
The only connected $3$-regular graph on $2$~vertices without loops is~$D_3$,
see Figure~\ref{fig:hcb}.
Hence we obtain:
\begin{lemma}\label{lem:mintwod}
  A doubly periodic network $N\subset\R^2$ minimizing the length ratio $L^2/A$ 
  for prescribed lattice~$\Lambda$ is Steiner on $2$~vertices
  with the dipole graph~$D_3$ as a quotient. 
\end{lemma}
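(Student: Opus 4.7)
The plan is to combine Theorem~\ref{th:minimizer} and Lemma~\ref{lem:Steinerloop} with a trivial combinatorial enumeration, since the setting $n=2$ leaves essentially no room for combinatorial variation.

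First, I would invoke Theorem~\ref{th:minimizer} with $n=2$ to obtain a length-minimizing network $N$ for the prescribed lattice~$\Lambda$, together with the structural information that $N$ is an embedded Steiner network whose quotient $\Gamma := N/\Lambda$ has exactly $2n-2 = 2$ vertices (ignoring superfluous degree-$2$ vertices with collinear incident edges, as specified in the theorem's remark). Second, Lemma~\ref{lem:Steinerloop} rules out loops in~$\Gamma$.

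It remains to show that the only connected $3$-regular graph on $2$~vertices without loops is the dipole graph~$D_3$. This is immediate: label the two vertices $p_1,p_2$. Since $\Gamma$ has no loops, every edge incident to~$p_1$ must have $p_2$ as its other endpoint, and $\deg p_1 = 3$ forces exactly three such edges. These edges also account for all three edges at~$p_2$, so $\Gamma \cong D_3$. Alternatively, use~\eqref{eq:rankfordegreethree} to see that $\Gamma$ has $\operatorname{rank}\Gamma - 1 + 2 = n - 1 + 2 = 3$ edges, with all six edge-endpoints distributed among the two vertices forbidden to carry loops.

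There is essentially no obstacle to overcome here, since the whole lemma reduces to reading off the combinatorial consequences of the earlier results for $n=2$. The only point requiring a sentence of care is to make explicit that after removing any redundant degree-$2$ vertices (as permitted by the remark following Theorem~\ref{th:minimizer}) the resulting quotient is genuinely $3$-regular on exactly $2$ vertices, so that the enumeration above applies verbatim.
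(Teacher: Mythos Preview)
Your proposal is correct and follows essentially the same route as the paper: invoke Theorem~\ref{th:minimizer} for $n=2$ to get a Steiner minimizer on $2n-2=2$ vertices, apply Lemma~\ref{lem:Steinerloop} to exclude loops, and observe that $D_3$ is the unique connected $3$-regular loopless graph on two vertices. The paper's argument is the same three-step reduction, stated more tersely in the paragraph preceding the lemma.
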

	\begin{figure}[b]
		\centering
		\begin{subfigure}[c]{.4\linewidth}
			\centering
				\begin{tikzpicture}
					\coordinate[dot1](p0) at (0,0);
					\coordinate[dot2](p1) at (3,0);
					\draw[steiner1](p0) -- (p1);
					\draw[steiner2](p0) to [in=-135,out=-45] (p1);
					\draw[steiner3](p0) to [in=135,out=45] (p1);
				\end{tikzpicture}
		\end{subfigure}
		\begin{subfigure}[c]{.4\linewidth}
			\centering
			\begin{tikzpicture}
				\coordinate[dot1](p0) at (0,0);
				\coordinate[dot2](p1) at (0:2);
				\coordinate[dot2](p2) at (120:1.5);
				\coordinate[dot2](p3) at (240:1);
				\draw[steiner2](p0) -- node[left ]{$x_3$} (p3);
				\draw[steiner3](p0) -- node[left ]{$x_2$} (p2);
				\draw[steiner1](p0) -- node[below]{$x_1$} (p1);
				\draw[steiner3](p1) -- ($(p1)-.7*(p2)$);
				\draw[steiner2](p1) -- ($(p1)-.8*(p3)$);
				\draw[steiner2](p2) -- ($(p2)-.7*(p3)$);
				\draw[steiner3](p3) -- ($(p3)-.7*(p2)$);
				\draw[steiner1](p3) -- ($(p3)-.5*(p1)$);
				\draw[steiner1](p2) -- ($(p2)-.6*(p1)$);
				\node[above right] at (p0) {$p_0$};
				\node[right] at (p1) {$p_1$};
				\node[above left] at (p2) {$p_2$};
				\node[below left] at (p3) {$p_3$};
			\end{tikzpicture}
		\end{subfigure}
		\caption{The dipole graph~$D_3$ and a network covering it.}
		\label{fig:hcb}
	\end{figure}
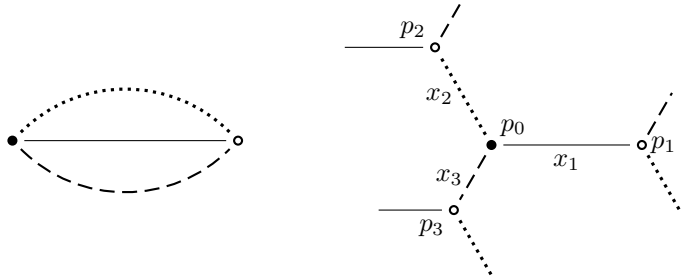
Since the edges of a Steiner network enclose $120^\circ$-angles,
a minimizing network can be described in terms of three edge lengths alone:
\begin{lemma}\label{lem:hcbformula}
  Up to isometry, a doubly periodic Steiner network~$N\subset\R^2$
  with quotient $D_3$ 
  is uniquely determined by its three edge lengths $x_1,x_2,x_3>0$. 
  Its length and spanned area are
\begin{align*}
  L = x_1+x_2+x_3 \qquad\text{and}\qquad 
  A = \frac{\sqrt3}{2}(x_1x_2+x_1x_3+x_2x_3)\,.
\end{align*}
\end{lemma}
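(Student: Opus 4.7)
The plan is to exploit the Steiner/balancing condition at the two quotient vertices $p_0, p_1$ to pin down the network geometry up to isometry once the three edge lengths are fixed, and then read off $L$ and $A$ from this explicit description.

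First, use a translation of $\R^2$ to place (a chosen lift of) $p_0$ at the origin. By balancing at $p_0$ the three incident edges are coplanar (automatic in $\R^2$) and meet at $120^\circ$-angles, so after a rotation, and a reflection if necessary, we may assume their unit direction vectors are
\[
  e_1=(1,0),\qquad e_2=(-\tfrac12,\tfrac{\sqrt3}{2}),\qquad e_3=(-\tfrac12,-\tfrac{\sqrt3}{2}).
\]
Labelling the edges so that the edge in direction $e_i$ has length $x_i$, the three endpoints in $\R^2$ are $q_i:=x_i e_i$. Since in the quotient $D_3$ every edge connects $p_0$ to $p_1$, each $q_i$ must be a lift of $p_1$, hence any two of the $q_i$'s differ by a lattice vector. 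I choose the representative $p_1:=q_1=x_1e_1$; then the lattice is forced to be
\[
  \Lambda \;=\; \mathbb Z\lambda_2+\mathbb Z\lambda_3,\qquad
  \lambda_2:=q_2-q_1=x_2e_2-x_1e_1,\qquad
  \lambda_3:=q_3-q_1=x_3e_3-x_1e_1.
\]
For $x_1,x_2,x_3>0$ the vectors $\lambda_2,\lambda_3$ are linearly independent, so $\Lambda$ is indeed a rank-$2$ lattice. This shows that the entire configuration (vertices, edges, lattice) is determined by $(x_1,x_2,x_3)$ up to the initial isometry.

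Next I check that the Steiner condition at $p_1$ is automatic and does not impose additional restrictions. The three edges incident to $p_1$ are the images under $-\mathrm{id}$ of the edges incident to $p_0$, up to lattice translation: explicitly, the three neighbours of $p_1$ in the lift are $0,\lambda_2,\lambda_3$ (the lattice translates of $p_0$), and the vectors from $p_1$ to them are $-x_1e_1,-x_2e_2,-x_3e_3$. These unit directions $-e_1,-e_2,-e_3$ are again pairwise at $120^\circ$, so~\eqref{eq:equilibrium} holds at $p_1$. The immersion property at both vertices is immediate because the three unit directions at each star are distinct.

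Finally, the length and area formulas drop out. Since $N/\Lambda$ has exactly three edges of lengths $x_1,x_2,x_3$, we have $L=x_1+x_2+x_3$. The area of the fundamental domain equals $|\det(\lambda_2,\lambda_3)|$, and plugging in the explicit coordinates gives
\[
  \det(\lambda_2,\lambda_3)
  \;=\;\bigl(-\tfrac{x_2}{2}-x_1\bigr)\bigl(-\tfrac{\sqrt3}{2}x_3\bigr)
       -\bigl(\tfrac{\sqrt3}{2}x_2\bigr)\bigl(-\tfrac{x_3}{2}-x_1\bigr)
  \;=\;\tfrac{\sqrt3}{2}\bigl(x_1x_2+x_1x_3+x_2x_3\bigr),
\]
which is the asserted $A$. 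The only step that is not entirely routine is the observation that balancing at $p_1$ is forced by balancing at $p_0$ together with the combinatorics of $D_3$; this is the structural reason the three lengths alone suffice as parameters.
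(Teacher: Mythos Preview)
Your proof is correct and follows essentially the same route as the paper: fix the star at one vertex by an isometry using the $120^\circ$-condition, read off the lattice from the differences of the three endpoints, and compute the determinant. Your version is slightly more explicit in verifying that balancing at the second vertex is automatic and that the lattice generators are independent, but the argument is the same.
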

\begin{proof}
	The two vertices of $D_3$ correspond to a vertex $p_0\in N$ 
  and the incident vertices $p_1,p_2,p_3\in N$, where the labelling
  relates to the lengths as in Figure~\ref{fig:hcb}.
  Then the lattice $\Lambda$ of $N$ is spanned, for instance, by 
  $g_1 := p_1 - p_3$ and $g_2 := p_2 - p_3$.
  Specifically, we may assume that up to isometry we have
	\begin{align*}
		p_0 &= \begin{pmatrix}0\\0\end{pmatrix}, & p_1 &=x_1\begin{pmatrix}1\\0\end{pmatrix}, & p_2 &= x_2\begin{pmatrix}-\frac12\\\frac{\sqrt3}{2}\end{pmatrix}, & p_3 &= x_3\begin{pmatrix}-\frac12\\-\frac{\sqrt3}{2}\end{pmatrix},
	\end{align*}
  and so the lattice $\Lambda$ has area
\[
  A = \left\vert\det(g_1,g_2)\right\vert 
    = \frac{\sqrt3}{2}(x_1x_2+x_1x_3+x_2x_3)\,.
\]
\vspace{-10mm}

\end{proof}
As might be expected, the optimal doubly periodic 
network is given by the tesselation of $\R^2$ with regular hexagons:
\begin{proposition}\label{prop:hcb}
	For each doubly periodic network $N\subset\R^2$ we have
	\begin{align} \label{eqn:estimatetwod}
		\frac{L^2}{A}\geq 2\sqrt3\,.
	\end{align}
	Equality holds if $N$ has the quotient $D_3$ and the three edge lengths of $N$
  are equal; then the lattice is hexagonal.
\end{proposition}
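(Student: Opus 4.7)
My plan is to combine three ingredients already available in the paper: the reduction to Steiner networks with quotient $D_3$, the explicit parametrization by edge lengths, and Maclaurin's inequality applied to the elementary symmetric polynomial $P_2$.

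First, I reduce to the Steiner case. Given an arbitrary doubly periodic network $N\subset\R^2$ with lattice $\Lambda$ and area $A$, Theorem~\ref{th:minimizer} asserts the existence of a length minimizer for the fixed lattice $\Lambda$, and Lemma~\ref{lem:mintwod} identifies it as a Steiner network with quotient $D_3$. Since the minimizing network has length $L'\leq L$ and the same $A$, the ratio satisfies $(L')^2/A\leq L^2/A$. Hence it suffices to verify the bound~\eqref{eqn:estimatetwod} for Steiner networks with quotient~$D_3$.

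Next, I invoke the parametrization of Lemma~\ref{lem:hcbformula}. A Steiner network with quotient $D_3$ is determined (up to isometry) by three positive edge lengths $x_1,x_2,x_3$, with
\[
L = x_1+x_2+x_3, \qquad A=\frac{\sqrt3}{2}\bigl(x_1x_2+x_1x_3+x_2x_3\bigr).
\]
Thus the problem becomes the purely algebraic assertion
\[
(x_1+x_2+x_3)^2\geq \sqrt3\cdot\sqrt3\,(x_1x_2+x_1x_3+x_2x_3)=3(x_1x_2+x_1x_3+x_2x_3).
\]
This is precisely Maclaurin's inequality (Lemma~\ref{le:elsympol}) in the case $m=3$, $k=2$, which gives
\[
P_2(x_1,x_2,x_3)=x_1x_2+x_1x_3+x_2x_3\leq\binom{3}{2}\Bigl(\frac{x_1+x_2+x_3}{3}\Bigr)^{\!2}=\frac{L^2}{3}.
\]
Substituting into the area formula yields $A\leq L^2/(2\sqrt3)$, hence $L^2/A\geq 2\sqrt3$, with equality exactly when $x_1=x_2=x_3$ by the equality clause of Lemma~\ref{le:elsympol}.

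Finally, I read off the equality case geometrically. When $x_1=x_2=x_3$, the formulas in Lemma~\ref{lem:hcbformula} place the four points $p_0,p_1,p_2,p_3$ at the centre and vertices of an equilateral triangle, so the lattice generators $g_1=p_1-p_3$ and $g_2=p_2-p_3$ have equal length and enclose a $60^\circ$ angle; this is the hexagonal lattice, and the resulting Steiner network is the honeycomb. There is no serious obstacle: the only substantive input is Maclaurin's inequality, already established in Lemma~\ref{le:elsympol}, and the mild point to check is that reduction from an arbitrary network to a Steiner one with quotient $D_3$ is legitimate, which follows verbatim from Theorem~\ref{th:minimizer} and Lemma~\ref{lem:mintwod}.
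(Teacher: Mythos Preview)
Your proof is correct and follows essentially the same route as the paper: reduce to a Steiner network with quotient~$D_3$ via Theorem~\ref{th:minimizer} and Lemma~\ref{lem:mintwod}, express $A$ through Lemma~\ref{lem:hcbformula}, and apply Maclaurin's inequality (Lemma~\ref{le:elsympol}) with $m=3$, $k=2$ to obtain $A\le L^2/(2\sqrt3)$. Your treatment of the equality case and the identification of the hexagonal lattice is also in line with the paper's argument.
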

% Does a hexagonal lattice and Steiner imply equality?
% If we work in the class of immersions (or embeddings): yes
% If we work with Sunada's def. of immersion: no 
% Sunada allows for for edges to overlap itsself.  For the hexagonal
% lattice this can for instance be realized by joining only the next 
% but one vertex of the lattice with an edge.
\begin{proof}
  For a prescribed lattice $\Lambda$, 
  Lemma~\ref{lem:mintwod} asserts the existence of a Steiner network~$N_0$ 
  with quotient~$D_3$ which is a minimizer, $(L^2/A)(N)\ge (L^2/A)(N_0)$,
  where the inequality is strict unless $N$ has quotient~$D_3$.
  According to Lemma~\ref{lem:hcbformula},
  the edge lengths $x_1,x_2,x_3>0$ determine~$N_0$,
  and the area~$A(N_0)$ is a multiple 
  of the elementary symmetric polynomial of degree~$2$ in three variables.
  Thus Maclaurin's inequality~\eqref{eq:maclaurin} 
  implies \eqref{eqn:estimatetwod} for~$N_0$:
  \[
    A(N_0)  = \frac{\sqrt3}2(x_1x_2+x_1x_3+x_2x_3)
    \leq\frac{3\sqrt3}2\Bigl(\frac{x_1+x_2+x_3}3\Bigr)^2 
    = \frac1{2\sqrt3}(L(N_0))^2\,
  \]
  In particular, \eqref{eqn:estimatetwod} follows for~$N$.
  To discuss the equality case, note that for~$N$ with quotient~$D_3$ 
  and $x_1=x_2=x_3$, equality in \eqref{eqn:estimatetwod} is obvious.  
  But by the above and Lemma~\ref{le:elsympol} 
  the equality can only hold for this case.
\end{proof}

%%%%%%%%%%%%%%%%%%%%%%%%%%%%%%%%%%%%%%%%%%%%%%%%%%%%%%%%%%%%%%%%%%%
\section{Triply periodic Steiner networks covering \texorpdfstring{$D_1\gsquare D_2$}{D1 D2}}

Our approach to triply periodic Steiner networks is similar to the doubly periodic case. 
However, as pointed out in the Introduction, Theorem~\ref{th:minimizer} 
and Lemma~\ref{lem:Steinerloop} allow exactly two 
distinct topologies of minimizing Steiner networks: 
\begin{lemma}
  The combinatorial graph of a triply periodic Steiner network,
  minimizing length for a prescribed lattice~$\Lambda$,
  is~$K_4$ or~$D_1\gsquare D_2$.
\end{lemma}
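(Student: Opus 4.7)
Combining two results already established, I would first reduce the statement to a purely combinatorial enumeration. By Theorem~\ref{th:minimizer} applied to $n=3$, any length-minimizing network for the prescribed lattice~$\Lambda$ is an embedded Steiner network whose quotient $N/\Lambda$ is $3$-regular on exactly $2n-2 = 4$ vertices. Since $N$ is connected, so is $N/\Lambda$. By Lemma~\ref{lem:Steinerloop}, $N/\Lambda$ has no loops. Hence it suffices to classify all connected $3$-regular multigraphs on $4$ vertices without loops and show there are precisely two of them, namely $K_4$ and $D_1\gsquare D_2$.

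The enumeration proceeds by a short case distinction on edge multiplicities. The handshake lemma fixes the number of edges at $\frac12\cdot 4\cdot 3 = 6$. If the graph is simple, then $K_4$ is the unique $3$-regular graph on $4$ vertices. Otherwise some edge has multiplicity at least~$2$. A triple (or higher) edge is excluded immediately: its two endpoints already have degree~$3$, forcing the remaining two vertices into a disconnected component, contradicting connectivity. Therefore every multi-edge has multiplicity exactly~$2$, and no vertex can be incident to two different double edges (degree would exceed~$3$).

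Starting from a double edge between vertices $u,v$, each of $u,v$ has one remaining edge-end and each of the other two vertices $x,y$ has three. Ruling out loops and the possibility that $u$'s and $v$'s remaining edges attach to the same vertex (which leaves the other vertex unreachable given the degree cap at the attachment point), the remaining edges are forced to be $ux$ and $vy$ (after relabeling), with the deficit at $x,y$ filled by a second double edge $xy$. Up to graph isomorphism this is precisely $D_1\gsquare D_2$.

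There is essentially no obstacle here; the whole statement is a consequence of two previously proved results plus a brief case analysis of small graphs. The only point where one must be slightly careful is verifying that no further configuration with a single double edge is possible, which the degree-counting argument above rules out. Concluding the lemma is then immediate.
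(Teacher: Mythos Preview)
Your proposal is correct and follows exactly the paper's approach: invoke Theorem~\ref{th:minimizer} for $n=3$ to get a $3$-regular quotient on four vertices, invoke Lemma~\ref{lem:Steinerloop} to exclude loops, and then enumerate. The paper itself does not spell out the enumeration but simply refers to Figure~\ref{fig:abstractlist3}; your case analysis fills in that detail. One minor wording quibble: in the case where the remaining single edges from $u$ and $v$ both go to~$x$, the vertex~$y$ is not literally ``unreachable'' (one $xy$-edge is still possible), but rather cannot attain degree~$3$, which is what actually forces the contradiction.
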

We analyze the case $D_1\gsquare D_2$ first since our analysis of the 
more prominent $K_4$-case makes use of it.
In both cases we can parameterize the space of networks by 
the edge lengths $x_1,\ldots,x_6$ of the six 
edges $e_1,\ldots,e_6$ in the quotient $N/\Lambda$; 
for $D_1\gsquare D_2$~there is a further angle parameter.
This will follow from considering the tangent planes
at the vertices; note that the Steiner condition implies 
that each vertex is coplanar with its three neighbours. 
In dimension $n=3$, the angles between the different tangent planes 
turn out to be independent of the edge lengths.
\smallskip

With respect to a labelling as in Figure~\ref{fig:ths} we state:
% note that the single edges are labelled $e_5$ and~$e_6$.
	\begin{figure}
		\centering
		\begin{subfigure}[t]{.45\linewidth}
			\centering
			\begin{tikzpicture}[x=2.6cm,y=2.6cm]
				\coordinate[dot1](p0) at (0,0);
				\coordinate[dot2](p1) at (1,0);
				\coordinate[dot4](p2) at (1,1);
				\coordinate[dot3](p3) at (0,1);
				\draw[steiner2](p0) to [out=45,in=135] node[above]{$e_2$} (p1);
				\draw[steiner1](p0) to [out=-45,in=-135] node[below]{$e_1$} (p1);
				\draw[steiner5](p3) to [out=45,in=135] node[above]{$e_4$} (p2);
				\draw[steiner4](p3) to [out=-45,in=-135] node[below]{$e_3$} (p2);
				\draw[steiner3](p0) to node[left ]{$e_5$} (p3);
				\draw[steiner6](p1) to node[right]{$e_6$} (p2);
			\end{tikzpicture}
		\end{subfigure}
		\begin{subfigure}[t]{.45\linewidth}
			\centering
			\begin{tikzpicture}[x=1.7cm,y=1.7cm]
				\coordinate[dot2](p0) at (-.5,-.866);
				\coordinate[dot1](p1) at (0,0);
				\coordinate[dot3](p2) at (.6,0);
				\coordinate[dot4](p3) at (1.1,.866);
				\coordinate[dot2](p4) at ($1.4*(-.5,.866)$);
				\coordinate[dot4](p5) at ($(.6,0)+.8*(.5,-.866)$);
				\coordinate[dot2](p6) at (2.1,.866);
				\draw[steiner1](p0)--node[left]{$x_1$}(p1);
				\draw[steiner2](p1)--node[left]{$x_2$}(p4);
				\draw[steiner3](p1)--node[above]{$x_5$}(p2);
				\draw[steiner5](p2)--node[left]{$x_4$}(p5);
				\draw[steiner4](p2)--node[right]{$x_3$}(p3);
				\draw[steiner6](p3)--node[above]{$x_6$}(p6);
				\draw[steiner5](p3)--+($.4*(p4)$);
				\node[below] at (p0) {$p_0$};
				\node[left] at (p1) {$p_1$};
				\node[right] at (p2) {$p_2$};
				\node[left] at (p3) {$p_3$};
				\node[above] at (p4) {$p_4$};
				\node[below] at (p5) {$p_5$};
				\node[below] at (p6) {$p_6$};
			\end{tikzpicture}
		\end{subfigure}
		\caption{The graph~$D_1\gsquare D_2$ and its covering network schematically.}
		\label{fig:ths}
	\end{figure}
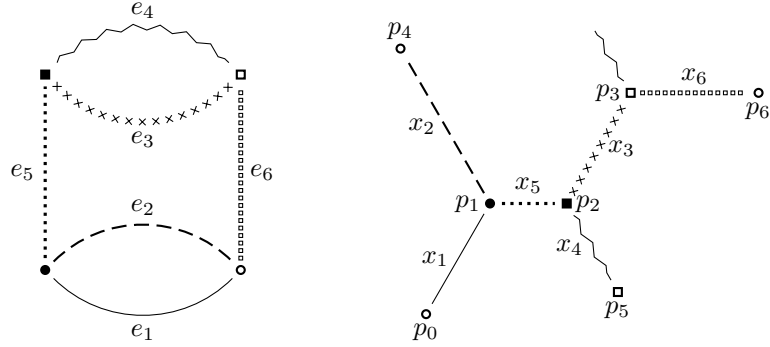
\begin{lemma}\label{lem:thsformula}
	Let $N\subset\R^3$ be a triply periodic Steiner network with 
  quotient $D_1\gsquare D_2$.  Then, up to isometry, the network $N$ 
  is uniquely determined by its six edge lengths $x_1,\ldots,x_6>0$ 
  and an angle $\alpha\in(0,\pi)$. 
  Moreover, $N$ has length $L=\sum_i x_i$ and, for a labelling of the
  edge lengths as in Figure~\ref{fig:ths}, the spanned volume is 
	\begin{align}
		\begin{split}
	V =\frac34\sin(\alpha)\big(x_1x_2x_3&+x_1x_2x_4+x_1x_3x_4+x_2x_3x_4\\
			&+(x_5+x_6)(x_1x_3+x_2x_3+x_1x_4+x_2x_4)\big)\,.
		\end{split}
		\label{eq:thsvol}
	\end{align}
\end{lemma}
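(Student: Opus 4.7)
My plan is to build the network vertex by vertex, at each stage using the Steiner condition as a coplanarity and $120^\circ$-angle constraint, while fixing coordinates with the isometry group of $\R^3$.

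First I would place $p_1$ at the origin. By balancing, its three incident edges are coplanar; rotate coordinates so this plane is the $xy$-plane and the single edge of length $x_5$ points in the positive $x$-direction. A reflection across the $x$-axis then fixes a consistent assignment of $x_1,x_2$ to the two lifts of $p_0$ adjacent to $p_1$. This exhausts all of $\mathrm{Iso}(\R^3)$ and pins down $p_0=(-x_1/2,x_1\sqrt3/2,0)$, a translate $p_4=(-x_2/2,-x_2\sqrt3/2,0)$ of $p_0$, and $p_2=(x_5,0,0)$.

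Next I would move to $p_2$. Its Steiner plane must contain the $x$-axis but is otherwise free to tilt; the angle $\alpha\in(0,\pi)$ it makes with the $xy$-plane is the sole remaining parameter (the open interval excludes the degenerate planar configurations, which are not $3$-periodic). With $\alpha$ fixed, balancing at $p_2$ determines the two lifts $p_3,p_5$ of the opposite quotient vertex, reached via the second double edge with lengths $x_3,x_4$. The crux is to then apply balancing at $p_0$ and at $p_3$. At $p_0$, the unit vector along the edge toward $p_1$ is $(1/2,-\sqrt3/2,0)$, while the unit vector along the other double-edge strand coincides by $\Lambda$-periodicity with the corresponding direction at the already-placed $p_4$, namely $(1/2,\sqrt3/2,0)$. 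Their sum is $(1,0,0)$, so Steiner forces the third (single) edge at $p_0$ to point in $-\hat x$ with length $x_6$. An analogous calculation at $p_3$ yields $+\hat x$. This pins down every vertex and proves the claimed rigidity up to isometry.

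The length formula $L=\sum x_i$ is then immediate. For the volume, I would extract three lattice generators from a cycle basis of the quotient graph: $g_1=p_4-p_0$ and $g_2=p_5-p_3$ from the two double-edge cycles, and $g_3$ from the four-edge cycle through all four quotient vertices, whose closing point is reached by following the $+\hat x$-pointing single edge out of $p_3$. The $g_i$ come out linear in the $x_j$, with $\cos\alpha,\sin\alpha$ entering only via the $y$- and $z$-components of $g_2,g_3$ and with the $z$-component of $g_1$ vanishing. The volume $V=\lvert\det(g_1,g_2,g_3)\rvert$ is then a $3\times 3$ determinant that is best expanded along the third column; after collecting terms it reduces to the claimed polynomial in the $x_i$ multiplied by $\tfrac34\sin\alpha$. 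I expect the main obstacle to be the bookkeeping in the middle paragraph — tracking which translates of $p_1$ and $p_2$ sit at the far ends of the double edges at $p_0$ and $p_3$ — together with the careful algebra of the final $3\times 3$ determinant.
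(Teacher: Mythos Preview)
Your proposal is correct and follows essentially the same route as the paper: place $p_1$ at the origin with the $x_5$-edge along the $x$-axis, introduce the tilt angle $\alpha$ for the Steiner plane at $p_2$, use balancing at the doubly connected vertices to force the single edges at $p_0$ and $p_3$ to point along $\pm\hat x$, and read off the lattice generators $g_1=p_4-p_0$, $g_2=p_5-p_3$, $g_3=p_6-p_0$. The paper phrases the key step as ``doubly connected vertices have coinciding normals,'' which is exactly your sum-of-unit-vectors computation at $p_0$ and $p_3$; the resulting coordinates and determinant are the same up to a reflection in the labelling.
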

% The volume $V$ happens to be the sum of all possible products of three edge 
% lengths where at least two edges are not adjacent 
% and two edges correspond to a double edge in $\Gamma$. 
\noindent
We will see that all edges of $N$ are contained in two sets of parallel planes
which make an angle~$\alpha$ to be chosen independently of the edge lengths. 
The limiting cases $\alpha=0$ and~$\pi$ relate to a doubly periodic network.
\begin{proof}
Consider a connected subgraph $\tilde N\subset N$ with seven vertices $p_0,\ldots,p_6$ as in Figure~\ref{fig:ths} such that $p_0,p_4,p_6$, as well as $p_3,p_5$ are identified in the quotient. We may assume $p_1$ is the orgin, $p_2$ is on the $x$-axis, and $p_0,p_4$ are in the $xy$-plane. Balancing then implies
\begin{align}\label{eq:thspoints0124}
  p_0=x_1\begin{pmatrix}-\frac12\\-\frac{\sqrt3}2\\0\end{pmatrix}\,,\quad 
  p_1=\begin{pmatrix}0\\0\\0\end{pmatrix}\,,\quad 
  p_2=x_5\begin{pmatrix}1\\0\\0\end{pmatrix}\,,\quad 
  p_4=x_2\begin{pmatrix}-\frac12\\\frac{\sqrt3}2\\0\end{pmatrix}\,.
\end{align}
The tangent plane at $p_2$ must be a rotation about the $x$-axis 
of the tangent plane at $p_1$ by an angle~$\alpha\in[0,2\pi)$. 
Let $A_\alpha\in SO(3)$ denote such a rotation. 
% i.e., A_\alpha = (1 0 0 // 0 cos\alpha -sin\alpha // 0 \sin\alpha \cos\alpha)
The Steiner condition then implies that 
$p_3-p_2$ points in the same direction as $p_1-p_0$ rotated by $A_\alpha$.
The same applies to $p_5-p_2$ and $p_1-p_4$.  That is,
\begin{equation}\label{eq:thspoints35}
\begin{split}
  % (p_3-p_2)/x_3 = A_\alpha (p_1-p_0)/x_1
  % <=>  p_3 = p_2 + x_3 A_\alpha (-p_0/x_1)
  p_3 &= p_2 + x_3\,A_\alpha\!\begin{pmatrix}\frac12\\\frac{\sqrt3}2\\0\end{pmatrix}
  =\frac12\begin{pmatrix}x_3+2x_5\\
                x_3\sqrt3\cos\alpha\\x_3\sqrt3\sin\alpha\end{pmatrix}\,,\\
  % vector is (p_1-p_4)/x_2 
  p_5 &= p_2 +x_4\,A_\alpha\!\begin{pmatrix}\frac12\\-\frac{\sqrt3}2\\0\end{pmatrix}  =\frac12\begin{pmatrix}x_4+2x_5\\
               -x_4\sqrt3\cos\alpha\\-x_4\sqrt3\sin\alpha\end{pmatrix}\,.
\end{split}
\end{equation}
Triple periodicity implies $\alpha\neq0\bmod \pi$ and changing $\alpha$ to $\alpha\pm\pi$ corresponds to a change of numbering of the vertices $p_3$ and $p_5$.
% Note that we have not used x_6 yet.
So we may assume $\alpha\in(0,\pi)$. Using the Steiner condition we see that for a pair of vertices which are doubly connected in $N/\Lambda$ the normals must agree. Since $p_6$ and $p_0$ are identified in $N/\Lambda$ the vector $p_6-p_3$ points in the same direction as $p_2-p_1$. So we have
\begin{equation}\label{eq:thspoints6}
  p_6 = p_3 + x_6\begin{pmatrix}1\\0\\0\end{pmatrix}=\frac12\begin{pmatrix}x_3+2x_5+2x_6\\x_3\sqrt3\cos\alpha\\x_3\sqrt3\sin\alpha\end{pmatrix}\, .
\end{equation}
The three vectors
\begin{equation}\label{eq:thslattice}
\begin{split}
	g_1 &:= p_4 - p_0 = \frac12\begin{pmatrix}x_1-x_2\\\sqrt3(x_1+x_2)\\0\end{pmatrix}\,,\\
	g_2 &:= p_5 - p_3 = \frac12\begin{pmatrix}x_4-x_3\\-\sqrt3(x_3+x_4)\cos\alpha\\-\sqrt3(x_3+x_4)\sin\alpha\end{pmatrix}\,,\\
	g_3 &:= p_6 - p_0 = \frac12\begin{pmatrix}x_1+x_3+2x_5+2x_6\\\sqrt3(x_1+x_3\cos\alpha)\\x_3\sqrt3\sin\alpha\end{pmatrix}
\end{split}
\end{equation}
span the lattice $\Lambda$; indeed, an inspection of Figure~\ref{fig:ths} shows they correspond to minimal cycles in the abstract graph. 
Then $\vert\det(g_1,g_2,g_3)\vert$ can be computed to~\eqref{eq:thsvol}.
% Note: Terms to sum must be of Form x_ix_jx_k, symmetric in x_1,...,x_4.
% The result depends on x_5,x_6 only through  x_5+x_6 
% Perhaps this is good enough to determine the formula for V.
\end{proof}
\begin{figure}[b]
	\centering
	\begin{subfigure}[c]{.45\linewidth}
		\centering
		\begin{tikzpicture}[scale=.7]
			\coordinate[dot3](r) at (-1,-2);
			\coordinate[dot1](p) at (-1,0);
			\coordinate[dot2](q) at (1,0);
			\coordinate[dot4](s) at (1,-2);
			\draw[steiner5](r) -- (p);
			\draw[steiner1](p) to [out=45,in=135] (q);
			\draw[steiner2](p) to [out=-45,in=225] (q);
			\draw[steiner3](q) -- (s);
			\node[left] at (p) {$p$};
			\node[right] at (q) {$q$};
			\node[left] at (r) {$r$};
			\node[right] at (s) {$s$};
		\end{tikzpicture}
	\end{subfigure}
	\begin{subfigure}[c]{.45\linewidth}
		\centering
		\begin{tikzpicture}
			\coordinate[dot1](p0) at (0,0);
			\coordinate[dot2](q-1) at (.5,.866);
			\coordinate[dot2](q0) at (.5,-.866);
			\coordinate[dot1](p1) at (0,-1.732);
			\coordinate(p-1) at (0,1.732);
			\coordinate(q1) at (.5,-2.6);
			\coordinate[dot3](r0) at (-1.2,0);
			\coordinate[dot4](q'-1) at (1.2,.866);
			\coordinate[dot4](s0) at (1.2,-.866);
			\coordinate[dot3](p'1) at (-1.2,-1.732);
			\draw[steiner1]($(p-1)!.5!(q-1)$) -- (q-1);
			\draw[steiner2](q-1) -- (p0);
			\draw[steiner1](p0) -- (q0);
			\draw[steiner2](q0) -- (p1);
			\draw[steiner1](p1) -- ($(q1)!.5!(p1)$);
			\draw[steiner3](q-1) -- (q'-1);
			\draw[steiner5](p0) -- (r0);
			\draw[steiner3](q0) -- (s0);
			\draw[steiner5](p1) -- (p'1);
			\node[right] at (p0) {$p_0$};
			\node[left] at (q0) {$q_0$};
			\node[left] at (r0) {$r_0$};
			\node[right] at (s0) {$s_0$};
		\end{tikzpicture}
	\end{subfigure}
	\caption{A Steiner network with double edges. The stars of the two doubly connected vertices lie in a common plane.}
	\label{fig:Steinerdouble}
\end{figure}
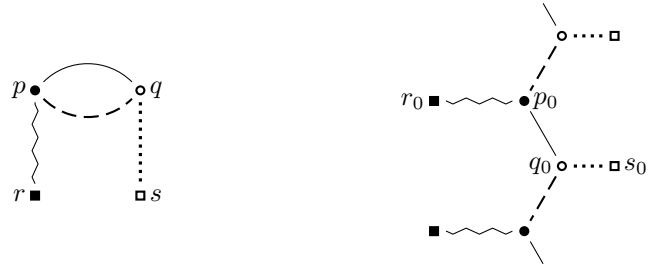

As an aside, we use the reasoning of Lemma~\ref{lem:thsformula} to show
that a minimizer $N\subset\R^n$ of~$L^n/V$ for prescribed lattice~$\Lambda$ 
cannot contain double edges for~$n\geq3$.
According to Theorem~\ref{th:minimizer}
the network~$N$ is an $n$-periodic Steiner network.
Let $p_0,q_0\in N$ be two adjacent vertices,
and suppose they project onto doubly connected vertices $p,q\in N/\Lambda$.
Denote by $r_0$ the neighbour of~$p_0$ which does not project to~$q$, and by $s_0$ the neighbour of~$q$ not projecting to~$p$, see Figure~\ref{fig:Steinerdouble}. 
Then the Steiner condition shows the vectors $r_0-p_0$ and $s_0-q_0$ 
are parallel and point into opposite directions. 
Now move $p_0$ and~$q_0$ simultanously in one of these directions:
For $0\leq t<1$, replace $p_0$ by $p^t_0:=p_0+ t(r_0-p_0)$ 
and $q_0$ by $q^t_0:=q_0+ t(r_0-p_0)$, 
and similarly so for all other lifts of~$p,q$.
We obtain an $n$-periodic Steiner network~$N^t$ 
with the same lattice~$\Lambda$ and $L(N_t) = L(N)$.  
The limiting network~$N_1$ with lattice~$\Lambda$
has length $L(N_1)=L(N)$ and so is again minimizing.  
However, $N_1$ has one vertex of degree~$4$, 
thereby contradicting Lemma~\ref{le:impossibledegrees}.
Our reasoning proves:
\begin{proposition}\label{prop:Steinerdouble}
  An $n$-periodic minimizer of $L^n/V$ with $n\geq3$ for prescribed lattice 
  covers a simple graph on $2n-2$ vertices of degree~$3$.
\end{proposition}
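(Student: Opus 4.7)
The plan is to argue by contradiction: suppose a length minimizer $N$ covers a graph with a multi-edge, then construct a length-preserving deformation that produces a vertex of degree $4$ in the limit, contradicting Lemma~\ref{le:impossibledegrees}. Since Theorem~\ref{th:minimizer} already supplies $2n-2$ vertices of degree $3$ and Lemma~\ref{lem:Steinerloop} rules out loops, eliminating multi-edges will complete the claim.

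Assume for contradiction that two vertices $p,q$ of $N/\Lambda$ are joined by (at least) two edges. Passing to the cover, the lifts of $p$ and $q$ assemble into a zigzag chain $\ldots-p_{-1}-q_{-1}-p_0-q_0-p_1-\ldots$ whose successive segments are lifts of the two quotient edges between $p$ and~$q$. Each $p_i$ has a further incident edge to some lift $r_i$ of a third quotient vertex, and each $q_i$ to some lift $s_i$. The key geometric input, extracted by unwinding the Steiner balancing condition at $p_0$ and $q_0$ together with the relation between the two lifts of the multi-edge, is that the vectors $r_0-p_0$ and $s_0-q_0$ are antiparallel, pointing in opposite directions along a common line. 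This is the analogue, in the multi-edge situation, of the normal-plane identification used in the proof of Lemma~\ref{lem:thsformula}.

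Granting this, I would define a one-parameter family $N^t$, for $t\in[0,1]$, by translating every lift of $p$ and every lift of $q$ simultaneously (and $\Lambda$-equivariantly) by the vector $t(r_0-p_0)$, leaving lifts of all other quotient vertices fixed. The double-edge lengths are unchanged (both endpoints move by the same vector); the edges $p_ir_i$ shrink by a factor $(1-t)$, while the antiparallel configuration makes the edges $q_is_i$ grow so that the two effects cancel exactly on a fundamental domain. Consequently $L(N^t)=L(N)$ and the lattice is preserved, so each $N^t$ is again a minimizer.

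The decisive step is to take the limit $t\to 1$: then $p_0$ coincides with $r_0$, the edge between them degenerates, and the merged vertex inherits $2+2$ incident edges, hence has degree~$4$. (The case $r=q$, i.e.\ a triple edge, is handled by the same translation or by applying the argument after first singling out two of the three parallel edges; the case $r=p$ is already excluded by Lemma~\ref{lem:Steinerloop}.) The limit network $N^1$ has the same length as $N$ and the same lattice, so it is also minimizing, but Lemma~\ref{le:impossibledegrees} produces from $N^1$ a strictly shorter network with the same lattice, contradicting the minimality of $N$. The one non-routine point I foresee is the verification of the antiparallelism of $r_0-p_0$ and $s_0-q_0$; this requires carefully comparing the two balancing equations together with the lattice identification sending one lift of the multi-edge to the other, analogously to the computation in Lemma~\ref{lem:thsformula}.
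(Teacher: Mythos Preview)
Your proposal is correct and matches the paper's own argument essentially step for step: the paper likewise uses the Steiner condition to show $r_0-p_0$ and $s_0-q_0$ are antiparallel, translates all lifts of $p$ and $q$ by $t(r_0-p_0)$ to obtain a length-preserving family $N^t$ with the same lattice, and then invokes Lemma~\ref{le:impossibledegrees} at $t=1$ to derive the contradiction. The antiparallelism you flag as the non-routine point is indeed the crux, and it follows directly by adding the two balancing equations at $p_0$ and $q_0$ once one notes that the two multi-edge directions at $p_0$ are the negatives of those at $q_0$.
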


\begin{remark*}
The number of connected $3$-regular simple graphs 
on $2n-2$ vertices, i.e., 
cubic graphs, is rapidly growing in~$n\geq3$, 
see \href{http://oeis.org/A002851}{oeis.org}.
% m=4: 2 graphs (6 vertices):- 
% - maximally symmetric hexagon, 
%   with diagonals which give minimal cycles with 4 vertices
% - a hexagon with two kindes of vertices, such that 
%   one cycle of 4 vertices and two cycles with 3 vertices
% m=5: 5 graphs, m=6: 19 graphs
\end{remark*}

The proposition implies that a triply periodic 
minimizer can only have the quotient~$K_4$.
Thus if we are merely interested in establishing Theorem~\ref{thm:A}
it may appear that we do not need the estimate 
for the quotient~{$D_1\!\gsquare D_2$}, stated in the next theorem.
However, a limiting case of \eqref{eq:ths} below
will enter the proof of Theorem~\ref{thm:srs}, 
and the equality result will also be used in Section~\ref{sec:homotopy}.

To determine optimal networks with quotient~{$D_1\!\gsquare D_2$}
we now solve a standard calculus problem, 
namely we maximize the function~$V$ under a constraint for~$L$.
Interestingly enough, up to similarity of~$\R^3$ there is 
a one-parameter family of optimal networks, 
meaning that these networks are not strictly stable: 
\begin{theorem}\label{thm:ths}
  Let $N\subset\R^3$ be a triply periodic Steiner network with 
  quotient~\mbox{$D_1\!\gsquare D_2$}. Then
\begin{align}\label{eq:ths}
	\frac{L^3}V\geq\frac{81}4\,,
\end{align}
where equality holds if and only if
\begin{align}\label{eq:thsequality}
	x_1=x_2=x_3=x_4=2x_5+2x_6\,\quad\text{and}\quad\alpha=\frac\pi2\,.
\end{align}
In the equality case the lattice is generated, up to similarity, by
$(0,1,0)$, $(0,0,1)$, $\frac12(\sqrt3,1,1)$.
\end{theorem}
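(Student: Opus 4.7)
The plan is to treat the statement as a finite-dimensional constrained optimization. By Lemma~\ref{lem:thsformula}, up to isometry any Steiner network covering $D_1\gsquare D_2$ is parameterized by $(x_1,\ldots,x_6,\alpha)$ with $x_i>0$ and $\alpha\in(0,\pi)$, and both $L$ and $V$ are explicit elementary functions of these. Since $L^3/V$ is scale-invariant I would fix $L=1$ and maximize $V$; the supremum turns out to be $4/81$, which immediately yields \eqref{eq:ths}.

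The argument proceeds by a sequence of reductions, each carrying a clean equality criterion. First, $V$ from~\eqref{eq:thsvol} is proportional to $\sin\alpha$, so for any fixed edge lengths the volume is uniquely maximal at $\alpha=\pi/2$. Second, the polynomial factor depends on $x_5,x_6$ only via their sum $s:=x_5+x_6$; this is the source of the asserted one-parameter family. Third, writing $a=x_1+x_2$, $b=x_1x_2$, $c=x_3+x_4$, $d=x_3x_4$, the polynomial factor becomes $bc+ad+sac$, which is strictly increasing in $b$ and $d$, while AM-GM gives $b\leq a^2/4$ and $d\leq c^2/4$ with equality precisely at $x_1=x_2$ and $x_3=x_4$. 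So at any maximizer I may set $x_1=x_2=u$ and $x_3=x_4=v$, reducing the problem to maximizing $P(u,v,s)=2uv(u+v+2s)$ subject to $2u+2v+s=1$ and $u,v,s>0$.

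A final AM-GM step, $uv\leq((u+v)/2)^2$ with equality iff $u=v$, together with $t:=u+v$ (so $s=1-2t$ and $t\in(0,1/2)$), reduces matters to maximizing $f(t)=\tfrac12 t^2(2-3t)$; its unique interior critical point is $t=4/9$, with $f(4/9)=16/243$. Hence $V\leq\tfrac34\cdot 16/243=4/81$ and therefore $L^3/V\geq 81/4$, proving~\eqref{eq:ths}. Tracing the equality conditions backwards gives $\alpha=\pi/2$, $u=v=2/9$, $s=1/9$, so $x_1=x_2=x_3=x_4=2/9=2(x_5+x_6)$, which is exactly~\eqref{eq:thsequality}. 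The split of $s$ into positive summands $x_5,x_6$ is otherwise unconstrained and produces a one-parameter family of non-similar networks. Substituting the equality values into~\eqref{eq:thslattice} (with $\cos(\pi/2)=0$, $\sin(\pi/2)=1$) and rescaling by $3\sqrt3/2$ yields the lattice generators $(0,1,0)$, $(0,0,1)$, $\tfrac12(\sqrt3,1,1)$, which are independent of the choice of $(x_5,x_6)$ within the family.

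I expect the main bookkeeping task is to verify that tracking the equality cases through the whole reduction chain really leaves only the free split of $x_5+x_6$ as a degree of freedom (beyond the isometry already quotiented out by Lemma~\ref{lem:thsformula}), and to check that the common lattice does not depend on this split. The inequality itself reduces to two applications of AM-GM together with one univariate maximization, all elementary.
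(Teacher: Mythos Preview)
Your argument is correct, and it takes a genuinely different route from the paper's. The paper extends $V$ and $L$ to the closed domain $[0,\infty)^5$ in the variables $(x_1,x_2,x_3,x_4,y)$ with $y=x_5+x_6$, verifies the strict inequality on the boundary using Maclaurin's inequality and AM--GM, and then locates the unique interior critical point by solving the Lagrange system $\nabla V=\lambda\nabla L$ via well-chosen linear combinations of the equations. You instead exploit the algebraic structure directly: writing the polynomial factor as $bc+ad+sac$ and observing monotonicity in $b,d$ lets two AM--GM steps collapse the problem to the one-variable function $f(t)=\tfrac12 t^2(2-3t)$ on $(0,\tfrac12)$, whose maximum at $t=4/9$ is immediate. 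Your approach is more elementary and makes the equality tracking transparent; the paper's Lagrange method is more systematic and would adapt to volume polynomials without such convenient symmetry. One point worth noting: the paper explicitly records the resulting inequality $V\le \tfrac{4}{81}L^3$ on the full closed domain (their \eqref{eq:thsinequality}), since a boundary instance of it (with one edge length set to zero) is reused in the proof of Theorem~\ref{thm:srs}; your chain of inequalities yields this extension as well, since each AM--GM step is valid for nonnegative reals.
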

\begin{proof}
  Admitting vanishing edge lengths, we will show the inequality in a 
  form implying \eqref{eq:ths}, namely
  \begin{align}\label{eq:thsinequality}
    V\leq\frac4{81}L^3\quad\text{ for all $x\in[0,\infty)^6$ and $\alpha\in(0,\pi)$}\,,
  \end{align}
  with equality precisely for \eqref{eq:thsequality}.

  For fixed $x=(x_1,\ldots,x_6)$ clearly $L$ is independent of~$\alpha$, 
  while~\eqref{eq:thsvol} gives that $V$ is maximal exactly at $\alpha=\pi/2$.
  % thereby confirming the last condition claimed in \eqref{eq:thsinequality}
  Moreover, both $V$ and~$L$ depend on $x_5,x_6$ only through $y:=x_5+x_6$. 
  Thus in order to establish \eqref{eq:thsinequality} 
  we may fix $\alpha$ to~$\pi/2$
  and consider the functions induced by $L$ and~$V$ 
  on the domain $[0,\infty)^5\ni(x_1,x_2,x_3,x_4,y)$.
  For the remainder of the proof 
  we denote these continuous functions again by $L$ and~$V$.

  We claim that \eqref{eq:thsinequality} holds along the boundary 
  of~$[0,\infty)^5$.  Trivially, this is true at~$0$.
  Otherwise let $(x_1,x_2,x_3,x_4,y)$ be a point 
  where at least one coordinate vanishes. In case $y=0$ the volume is
  \[ 
    V =\frac34\bigl(x_1x_2x_3+x_1x_2x_4+x_1x_3x_4+x_2x_3x_4\bigr)\,.
  \]
	The right-hand side contains the elementary symmetric polynomial of degree $k=3$ in $m=4$ variables and so indeed, by Maclaurin's inequality~\eqref{eq:maclaurin},
	\[
    V \leq3\Bigl(\frac{x_1+x_2+x_3+x_4}4\Bigr)^3 = \frac3{64}L^3 < \frac4{81}L^3\,.
  \]
  % Indeed, the strict inequality holds since we assume L\not=0.
	The other case is that some $x_i$ vanishes for $i=1,2,3$, or~$4$. 
  In view of the symmetry of $V$ and~$L$ it suffices to consider the case $x_1=0$. 
Under this assumption 
Maclaurin's inequality gives
	\[V=\frac34 x_2(x_3x_4+x_3y+x_4y)\leq\frac14x_2(x_3+x_4+y)^2\,.\]
Then the claim follows from the estimate on geometric and arithmetic mean,
\[V\leq x_2\,\frac{x_3+x_4+y}2\;\frac{x_3+x_4+y}2\leq\frac1{27}(x_2+x_3+x_4+y)^3=\frac1{27}L^3<\frac4{81}L^3\,.\]
	
We now proceed as in the proof of Maclaurin's inequality. 
The continuous function~$V$ attains its maximum 
on the compact set $L^{-1}(1)\subset[0,\infty)^5\setminus\{0\}$
at some point $z:=(x_1,\ldots,x_4,y)$.

One easily verifies $V=4\,L^3/81$ if \eqref{eq:thsequality} holds. 
Thus our claim implies that in fact $z\in (0,\infty)^5$.
For the set $(0,\infty)^5$, 
the point $z$ is critical for $V$ under the constraint $L=1$, and so
	\[ \nabla V(z)= \lambda\nabla L(z) \,,\]
where $\lambda\in\R$ is a Lagrange multiplier. Equivalently,
	\begin{align}
   \frac34 \begin{pmatrix}x_2x_3+x_2x_4+x_3x_4+x_3y+x_4y\\
		x_1x_3+x_1x_4+x_3x_4+x_3y+x_4y\\
		x_1x_2+x_1x_4+x_2x_4+x_1y+x_2y\\
		x_1x_2+x_1x_3+x_2x_3+x_1y+x_2y\\
		x_1x_3+x_2x_3+x_1x_4+x_2x_4		
		\end{pmatrix}
  = \lambda\begin{pmatrix}1\\1\\1\\1\\1\end{pmatrix} 
\,.\label{eq:thslagrange}
	\end{align}
We claim this implies $x_1 = x_2 = x_3 = x_4 = 2y$.
For the proof, we consider dot products of \eqref{eq:thslagrange} 
with four different vectors.  Namely, the product 
with $(1,-1,0,0,0)$ gives $(x_2-x_1)(x_3+x_4)=0$,
the product with $(0,0,1,-1,0)$ gives $(x_1+x_2)(x_4-x_3)=0$.
Moreover, for $(0,1,0,-1,0)$ we obtain $(x_4 - x_1)(x_1+x_4+2y)=0$,
and for $(0,1,0,0,-1)$ we obtain $x_1(2y-x_1)=0$.
Since $z$ has positive coordinates our four equations prove the claim.

We have shown there is a unique critical point 
$z\in(0,\infty)^5$ for $V$ under the constraint $L=1$, 
where $V$ attains its maximal value $V(z)=4/81$.
This implies the inequality~\eqref{eq:thsinequality} first for $L=1$, 
and then, by the scaling invariance of $L^3/V$, in general.
Finally, the uniqueness of~$z$ implies that in general equality 
holds if and only if~\eqref{eq:thsequality} holds; 
to verify the lattice vectors use~\eqref{eq:thslattice}.
\end{proof}

%%%%%%%%%%%%%%%%%%%%%%%%%%%%%%%%%%%%%%%%%%%%%%%%%%%%%%%%%%%%%%%%%%%%%%
\section{The \srs\ network covering the \texorpdfstring{$K_4$}{K4} graph}

We discuss the network related to the gyroid.  
Each vertex of a Steiner network has a well-defined affine tangent plane, 
containing the edge vectors to the incident vertices; 
each vertex in $N/\Lambda$ defines a tangent plane up to translation.
(We avoid the usage of normal vectors since the tangent planes are unoriented.)

For a Steiner network with quotient $K_4$ we use balancing and the
fact that each pair of vertices in $K_4$ is connected with an edge
to show that the four tangent planes are perpendicular to the
four space diagonal directions: 
\begin{lemma}\label{lem:srsangle}
  Let $N\subset\R^3$ be a triply periodic Steiner network 
  with quotient graph~$K_4$. 
  Then the four tangent planes of $N/\Lambda$ are tangent to the four
  faces of a regular tetrahedron.
  Consequently, up to isometry of $\R^3$, the network $N$ is uniquely 
  defined by its six edge lengths $x_1,\ldots,x_6>0$. 
\end{lemma}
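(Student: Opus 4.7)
The plan is to exploit two facts: at each vertex the three incident edge directions lie in the tangent plane at that vertex and sum to zero by balancing, and for every edge of $K_4$ joining $p_i$ to $p_j$, the direction of that edge lies in the intersection $T_i\cap T_j$. Since $K_4$ connects every pair of vertices, the four tangent planes $T_0,\ldots,T_3$ must be pairwise distinct --- otherwise the whole network would lie in a common plane, contradicting triple periodicity --- so $T_i\cap T_j$ is exactly the line spanned by the edge direction $u_{ij}$.

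Next I would normalize coordinates so that a chosen lift of $p_0$ sits at the origin, $T_0$ is the $xy$-plane, and $u_{01}=(1,0,0)$; balancing at $p_0$ then fixes $u_{02}$ and $u_{03}$ to be the two $120^\circ$-rotates of $u_{01}$ in~$T_0$. For each $i\in\{1,2,3\}$ the plane $T_i$ contains $u_{0i}$ and is therefore determined by a single rotation angle $\theta_i\in(0,\pi)$ about the axis through $u_{0i}$. The Steiner condition at $p_i$ then expresses the two remaining outgoing directions from $p_i$ explicitly in terms of $\theta_i$, in complete analogy with the parameterization carried out for $D_1\gsquare D_2$ in Lemma~\ref{lem:thsformula}.

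The key step is to pin these angles down by imposing balancing at a third vertex. At $p_2$ the three outgoing unit directions $u_{20}=-u_{02}$, $u_{21}=-u_{12}$, and $u_{23}$ must sum to zero, so $u_{23}=u_{02}+u_{12}$; demanding that this combination be a unit vector yields the single scalar equation $\tfrac32(1+\cos\theta_1)=1$, i.e.\ $\cos\theta_1=-\tfrac13$, which is exactly the angle between two faces of a regular tetrahedron. The same computation at the other vertices, or equivalently the symmetry of $K_4$ under permutations of its four labels, gives $\cos\theta_i=-\tfrac13$ for all $i$, so the four tangent planes are parallel to the faces of a regular tetrahedron as claimed.

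The main obstacle is purely computational: signs must be tracked carefully, because the direction of an edge at its $p_j$-end is the negative of its direction at the $p_i$-end, so the Steiner sum at $p_j$ mixes the signs of the vectors $u_{ij}$. Once the tetrahedral configuration of tangent planes is established the second assertion is essentially immediate: the six edge directions are the lines $T_i\cap T_j$ with signs fixed by balancing at each vertex, so after prescribing an overall isometry of~$\R^3$ the vertex positions are determined from the six edge lengths via $p_j=p_i+x_{ij}u_{ij}$, and the lattice generators come out as explicit linear functions of the six lengths.
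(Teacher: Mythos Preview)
Your approach coincides with the paper's: both fix coordinates at $p_0$ with $T_0$ the $xy$-plane, parameterize the adjacent tangent plane $T_1$ by a rotation angle about the axis $u_{01}$, and pin that angle down by requiring a certain sum of two unit edge-directions to be again a unit vector, then invoke the symmetry of $K_4$ to extend to all pairs.

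The one place where the paper is more careful is a two-fold case split you suppress. Balancing at $p_1$ determines the two remaining outgoing directions in $T_1$ only as an \emph{unordered} pair, so which of them is $u_{12}$ and which is $u_{13}$ is not fixed by $\theta_1$ alone---it depends on the actual network. Your formula $\tfrac32(1+\cos\theta_1)=1$ comes from one of the two assignments; the other gives $\tfrac32(1-\cos\theta_1)=1$, i.e.\ $\cos\theta_1=\tfrac13$. The paper phrases this dichotomy as the question of whether the second neighbour $p_6$ of $p_1$ is identified with $p_2$ or with $p_3$ in the quotient, treats both cases, and obtains $|\cos\beta|=\tfrac13$. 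Since either sign yields the same unoriented dihedral angle $\arccos(\tfrac13)$, your conclusion is correct, but the case distinction should be acknowledged rather than hidden inside the phrase ``signs must be tracked carefully.''
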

	\begin{figure}[t]
		\centering
		\begin{subfigure}[t]{.45\linewidth}
			\centering
			\begin{tikzpicture}[x=3cm,y=3cm]
				\coordinate[dot1](p0) at (0,0);
				\coordinate[dot2](p1) at (1,0);
				\coordinate[dot3](p2) at (1,1);
				\coordinate[dot4](p3) at (0,1);
				\draw[steiner5](p3) -- node[pos=.25,above right]{$\!e_5$} (p1);
				\draw[steiner6,cross](p0) -- node[pos=.3,above left]{\;$e_2$} (p2);
				\draw[steiner1](p0) -- node[below]{$e_1$} (p1);
				\draw[steiner2](p1) -- node[right]{$e_6$} (p2);
				\draw[steiner3](p2) -- node[above]{$e_4$} (p3);
				\draw[steiner4](p3) -- node[left ]{$e_3$} (p0);
			\end{tikzpicture}
		\end{subfigure}
		\begin{subfigure}[t]{.45\linewidth}
			\centering
			\begin{tikzpicture}[x=1.5cm,y=1.5cm]
				\coordinate[dot1] (p0) at (0,0);
				\coordinate[dot2] (p1) at (1,0);
				\coordinate[dot3] (p2) at (-.5,.866);
				\coordinate[dot4] (p3) at (-.5*1.3,-.866*1.3);
				\coordinate[dot4] (p4) at ($(p2) + (.8*.5,.8*.866)$);
				\coordinate[dot2] (p5) at ($(p3) + (.8*-1,0)$);
				\coordinate[dot3] (p6) at ($(p1) + (.8*.5,.8*-.866)$);
				\coordinate (p7) at ($(p1) + (.5*.5,.5*.866)$);
				\coordinate (p8) at ($(p3) + (.4*.5,.4*-.866)$);
				\coordinate (p9) at ($(p2) + (-.5,0)$);
				\node[below right] at (p0) {$p_0$};
				\node[right] at (p1) {$p_1$};
				\node[below left] at (p2) {$p_2$};
				\node[right] at (p3){$p_3$};
				\node[right] at (p4) {$p_4$};
				\node[below] at (p5) {$p_5$};
				\node[below] at (p6) {$p_6$};
				\draw[steiner2](p1) -- node[left, pos=.7]{$x_6$} (p6);
				\draw[steiner5](p1) -- (p7);
				\draw[steiner5](p3) -- node[above]{$x_5$} (p5);
				\draw[steiner3](p3) -- (p8);
				\draw[steiner3](p2) -- node[left,pos=.7]{$x_4$} (p4);
				\draw[steiner2](p2) -- (p9);
				\draw[steiner1](p0) -- node[above]{$x_1$} (p1);
				\draw[steiner6](p0) -- node[right]{$x_2$} (p2);
				\draw[steiner4](p0) -- node[left]{$x_3$} (p3);
			\end{tikzpicture}
		\end{subfigure}
		\caption{The graph $K_4$ and the labelling of the network covering it.}
		\label{fig:srs}
	\end{figure}
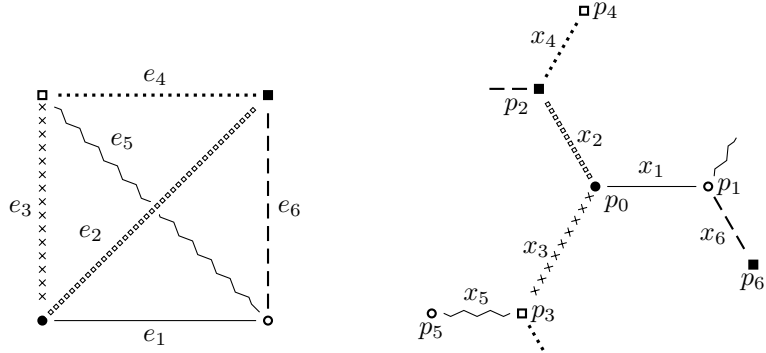%
\begin{proof}
  From $N$ we pick a connected subgraph which contains a vertex $p_0$ 
  and its three neighbours $p_1,p_2,p_3$, 
  representing the vertices of $N/\Lambda$.
  Without loss of generality we may assume $p_0$ to be the origin, 
  $p_1$ to lie on the $x$-axis and $p_2,p_3$ to lie in the $xy$-plane.
  That is, we assume 
	\begin{align}
		p_0 &= \begin{pmatrix}0\\0\\0\end{pmatrix}, & p_1 &=x_1\begin{pmatrix}1\\0\\0\end{pmatrix}, & p_2 &= x_2\begin{pmatrix}-\frac12\\\frac{\sqrt3}{2}\\0\end{pmatrix}, & p_3 &= x_3\begin{pmatrix}-\frac12\\-\frac{\sqrt3}{2}\\0\end{pmatrix},\label{eq:srscoords}
	\end{align}
  where $x_i>0$ is the edge length of the edge incident to~$p_i$.

  Let $p_6\neq p_0$ be a vertex incident to $p_1$, compare Figure~\ref{fig:srs}.
  Copying the reasoning of the proof of Lemma~\ref{lem:thsformula} 
  we find, in terms of some rotation $A_\beta$ 
  about the $x$-axis, where $-\pi<\beta<\pi$:
  \[
    p_6-p_1 = \frac{x_6}{x_2} A_\beta(p_0-p_2) 
    = x_6A_\beta\begin{pmatrix}\frac12\\-\frac{\sqrt3}2\\0\end{pmatrix}   
    = x_6\begin{pmatrix}\frac12\\
        -\frac{\sqrt3}2\cos(\beta)\\-\frac{\sqrt3}2\sin(\beta)\end{pmatrix}\,.
  \]
  Then $\min\big\lbrace\vert\beta\vert,\pi-\vert\beta\vert\big\rbrace$ represents the dihedral angle between 
  the two tangent planes at $p_0$ and~$p_1$.
	
  In the quotient $N/\Lambda$, the vertex $p_6$ must be identified 
  with one of the four vertices $p_0,\ldots,p_3$. %FOUR
  Since the shortest cycle in~$K_4$ consists of three edges 
  this vertex must be either $p_2$ or~$p_3$.  
  Suppose $p_6$ is identified with~$p_2$.
  The tangent planes at these two points agree as vector spaces.
  Hence the balancing equation~\eqref{eq:equilibrium} 
  implies that the vectors $p_2-p_0$ and $p_6-p_1$ enclose $120$~degrees, 
  and the sum of the two unit vectors pointing into these directions 
  must be a unit vector:
  \[
    1 = \left\vert \frac{p_0-p_2}{\vert p_0-p_2\vert} +
                        \frac{p_1-p_6}{\vert p_1-p_6\vert} \right\vert 
    = \left\vert\begin{pmatrix}0\\\frac{\sqrt3}{2} (-1 + \cos (\beta ))\\
                         \frac{\sqrt3}{2}\sin(\beta)\end{pmatrix}\right\vert 
    = \sqrt{\frac32}\sqrt{1-\cos(\beta)}\,.
    % ==> 1-\cos\beta = 2/3 ==> \cos\beta = 1/3
  \]
  The other case is that $p_6$ is identified with~$p_3$. Then, similarly,
  \[
    1=\left\vert\frac{p_1-p_6}{\vert p_1-p_6\vert} 
                           + \frac{p_0-p_3}{\vert p_0-p_3\vert}\right\vert 
    = \left\vert\begin{pmatrix}0\\ \frac{\sqrt3}{2} (1+\cos (\beta ))\\ 
                         \frac{\sqrt3}{2}\sin(\beta)\end{pmatrix}\right\vert 
    = \sqrt{\frac32}\sqrt{1+\cos(\beta)}\,.
    % ==> 1+\cos\beta = 2/3 ==> \cos\beta = -1/3
  \]
  From both cases we conclude $|\cos(\beta)| = 1/3$,
  % i.e., \beta = 70.5... or 109.4... degrees.
  and so the dihedral angle of the tangent planes at $p_0$ and~$p_1$ 
  is the tetrahedral angle $\arccos(1/3)\approx70.53^\circ$.

  In $K_4$, any pair of vertices is connected by an edge, 
  and so the same argument applies to any pair of vertices $p_i,p_j$ 
  of~$N/\Lambda$.  But four planes in~$\R^3$ 
  can only have pairwise dihedral angles~$\arccos(1/3)$ 
  if they are parallel to the faces of a regular tetrahedron.

  Finally, lengths and tangent planes determine a Steiner network completely
  up to isometry.
\end{proof}

For the next statement we choose to label the six edges 
$e_1,\ldots,e_6$,
such that the edges $e_i$ and $e_{i+3}$ do not have endpoints in common,
see Figure~\ref{fig:srs}. We let $x_i$ be the length of~$e_i$.

\begin{lemma}\label{lem:srsformula}
  Let $N$ be a triply periodic Steiner network with quotient~$K_4$. 
  Then $N$ has length $L=\sum_i x_i$ and the spanned volume is
	\begin{align}
\begin{split}
		V =\frac{1}{\sqrt2}&\big(x_1x_2x_4 + x_1x_2x_5 + x_1x_2x_6 + x_1x_3x_4
							\\&+x_1x_3x_5+x_1x_3x_6+x_1x_4x_5+x_1x_4x_6
							+x_2x_3x_4+x_2x_3x_5\\&+x_2x_3x_6+x_2x_4x_5
							+x_2x_5x_6+x_3x_4x_6+x_3x_5x_6+x_4x_5x_6\big)\,. 
		\end{split}
		\label{eq:srsvol}
	\end{align}
\end{lemma}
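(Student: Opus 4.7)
The length formula $L=x_1+\cdots+x_6$ is immediate since $N/\Lambda$ has six edges with the stated lengths; only the volume formula requires work. The plan is to use Lemma~\ref{lem:srsangle} to parameterise the six edge directions explicitly and then to compute $V=|\det(g_1,g_2,g_3)|$ for three independent lattice generators.

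First I would encode the four parallel tangent planes via outward unit normals $u_0,u_1,u_2,u_3$ pointing from the centroid to the vertices of a regular tetrahedron, so $u_0+u_1+u_2+u_3=0$ and $u_i\cdot u_j=-1/3$ for $i\ne j$. Since an edge joining $p_i$ and $p_j$ lies in both tangent planes, its direction is parallel to $u_i\times u_j$. The Steiner balance at each vertex, combined with a single global chirality choice, forces the unit directions to be
\[
v_{ij}\;=\;\frac{3}{2\sqrt{2}}\,u_i\times u_j,\qquad i\ne j.
\]
One can verify balancing directly: $\sum_{j\ne i}v_{ij}=\frac{3}{2\sqrt{2}}\,u_i\times\sum_{j\ne i}u_j=-\frac{3}{2\sqrt{2}}\,u_i\times u_i=0$.

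Next, since $K_4$ has first Betti number three, three independent cycles suffice to generate the lattice $\Lambda$. Summing edge vectors along the three triangles based at $p_0$ yields
\[
g_1=x_1v_{01}+x_6v_{12}-x_2v_{02},\quad g_2=x_1v_{01}+x_5v_{13}-x_3v_{03},\quad g_3=x_2v_{02}+x_4v_{23}-x_3v_{03},
\]
and the spanned volume is $V=|\det(g_1,g_2,g_3)|$.

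Finally, expanding this $3\times 3$ determinant after substituting explicit components for the $v_{ij}$ produces a cubic polynomial in $x_1,\ldots,x_6$. There is a useful \emph{a priori} observation: the four ``star triples'' of edges meeting at a common vertex, namely $\{e_1,e_2,e_3\},\{e_1,e_5,e_6\},\{e_2,e_4,e_6\},\{e_3,e_4,e_5\}$, must contribute zero, because the three corresponding direction vectors share a common tangent plane and hence have vanishing scalar triple product. The remaining $16$ triples (four triangles and twelve paths of length three in $K_4$) are exactly those appearing in~\eqref{eq:srsvol}. The main obstacle is the sign bookkeeping: one must verify that the $16$ surviving monomials all carry the same sign, so that taking the absolute value of the determinant reproduces the displayed formula with the common prefactor $1/\sqrt{2}$.
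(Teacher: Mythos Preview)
Your argument is correct and, at its core, coincides with the paper's: both compute $V=|\det(g_1,g_2,g_3)|$ for the three triangle cycles of $K_4$ through the vertex~$p_0$, and these are precisely the lattice generators the paper writes down after Lemma~\ref{lem:srsangle}. The difference lies only in how the six edge directions are encoded. The paper fixes the explicit coordinates~\eqref{eq:srscoords} and obtains the remaining directions by rotating the tangent plane at~$p_0$ through the tetrahedral angle $\beta=\arccos(1/3)$, then expands a concrete $3\times3$ determinant. Your encoding via $v_{ij}=\tfrac{3}{2\sqrt2}\,u_i\times u_j$ is a tidy, more symmetric alternative: it makes the balancing identity $\sum_{j\ne i}v_{ij}=0$ immediate and, more interestingly, gives a structural explanation---absent from the paper---for why exactly the four monomials $x_1x_2x_3$, $x_1x_5x_6$, $x_2x_4x_6$, $x_3x_4x_5$ are missing from~\eqref{eq:srsvol}: the corresponding triples of direction vectors lie in a common tangent plane and hence have vanishing scalar triple product. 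What neither route avoids is the residual bookkeeping you flag at the end: one still has to verify that the sixteen surviving scalar triple products all have absolute value $\tfrac{1}{\sqrt2}$ and that their signs combine with the signs in the $g_k$ to a common sign. The paper simply says ``calculate''; your version isolates exactly what remains to be calculated and explains why it is the only nontrivial point.
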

\noindent
The sum extends over all possible products of three edge lengths
except for those relating to triples of concurrent edges.
\begin{remark*}\label{rem:srsorientation}
  By Lemma~\ref{lem:srsangle}, lengths and tangent planes 
	determine a Steiner network completely up to isometry. 
	Up to rigid motion, however, there are two different Steiner networks 
	covering $K_4$ with the same edge lengths. The isometry mapping the two networks onto another is a reflection which corresponds to a sign change of~$\beta$. Note that the four tangent planes at the vertices of a network are the tangent planes of a regular tetrahedron. Hence, the choice of any two tangent planes determines the other two.
\end{remark*}
\begin{proof}
	After isometry of $\R^3$  we may assume the coordinates are as in~\eqref{eq:srscoords}. For $i=1,2,3$ let $A^i_\beta\in SO(3)$ be the rotation fixing $p_i$ with an angle $\beta = \arccos(1/3)$. In view of Remark~\ref{rem:srsorientation}, possibly by replacing $\beta$ by $-\beta$, the three vectors
	\begin{align*}
		g_1 &:= (p_0 - p_2) + (p_1 - p_0) + \frac{x_6}{x_2}A^1_\beta(p_0 - p_2) = \begin{pmatrix}x_1+\frac12x_2+\frac12x_6\\-\frac{\sqrt3}2x_2-\frac1{2\sqrt3}x_6\\-\sqrt{\frac23}\,x_6\end{pmatrix}\,,\\
		g_2 &:= (p_0 - p_3) + (p_2 - p_0) + \frac{x_4}{x_3}A^2_\beta(p_0 - p_3) = \begin{pmatrix}-\frac12x_2+\frac12x_3\\\frac{\sqrt3}2x_2+\frac{\sqrt3}2x_3+\frac1{\sqrt3}x_4\\-\sqrt{\frac23}\,x_4\end{pmatrix}\,,\\
		g_3 &:= (p_0 - p_1) + (p_3 - p_0) + \frac{x_5}{x_1}A^3_\beta(p_0 - p_1) = \begin{pmatrix}-x_1-\frac12x_3-\frac12x_5\\-\frac{\sqrt3}2x_3-\frac1{2\sqrt3}x_5\\-\sqrt{\frac23}\,x_5\end{pmatrix}
	\end{align*}
are linearly independent and span the lattice~$\Lambda$. 
To verify	\eqref{eq:srsvol}, calculate
\[\det(g_1,g_2,g_3) = \frac{-1}{2\sqrt2}\,\det\begin{pmatrix}2x_1+x_2+x_6&-x_2+x_3&-2x_1-x_3-x_5\\-x_2-\frac13x_6&x_2+x_3+\frac23x_4&-x_3-\frac13x_5\\x_6&x_4&x_5\end{pmatrix}.\]
\vspace{-8mm}

\end{proof}

\begin{theorem}\label{thm:srs}
  Let $N$ be a triply periodic Steiner network in $\R^3$ with 
  quotient~$K_4$.  Then
  \begin{align*} 
	\frac{L^3}{V}\geq\frac{27}{\sqrt2}\approx19.09\,,
  \end{align*}
  where equality holds if and only if all edge lengths of $N$ coincide 
  and the lattice is body-centered cubic.
\end{theorem}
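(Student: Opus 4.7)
The plan is to parallel the proof of Theorem~\ref{thm:ths}. By Lemma~\ref{lem:srsformula} the space of triply periodic $K_4$-Steiner networks is identified, up to isometry, with $(0,\infty)^6$, on which both $L$ and $V$ become explicit polynomials. Since $L^3/V$ is scale-invariant, it suffices to maximize the continuous function $V$ of~\eqref{eq:srsvol} on the compact simplex $\{x\in[0,\infty)^6:L(x)=1\}$ and show that the maximum equals $\sqrt{2}/27$, attained only at the balanced point $x_1=\cdots=x_6=1/6$. Direct substitution there gives $L^3/V=27/\sqrt{2}$; moreover, plugging $x_i=1/6$ into the lattice basis from the proof of Lemma~\ref{lem:srsformula} produces three vectors of equal squared length with pairwise inner products equal to $-1/3$ of that squared length, from which one recognises the primitive basis of a body-centered cubic lattice.

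For the boundary of $[0,\infty)^6$, by the $S_4$-symmetry of $V$ inherited from the edge action of $\operatorname{Aut}(K_4)=S_4$ it suffices to treat the facet $x_1=0$. There $V$ becomes a sum of eight of the ten triple products in $x_2,\ldots,x_6$, the two missing ones being $x_2x_4x_6$ and $x_3x_4x_5$ (the concurrent triples at $p_2$ and~$p_3$ that avoid $e_1$). The residual Klein-four symmetry reduces symmetric interior critical points to $x_2=x_3=x_5=x_6=a$ and $x_4=b$; a one-variable calculation with $4a+b=1$ gives maximal value $V=8\sqrt{2}/243<9\sqrt{2}/243=\sqrt{2}/27$. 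Iterating on lower-dimensional boundary strata, where each reduction produces fewer variables and fewer surviving triple products, controlled by Maclaurin's inequality (Lemma~\ref{le:elsympol}), handles the remaining boundary; alternatively, as suggested in the introduction of Section~5, one may compare a degenerate $K_4$-configuration with a $D_1\gsquare D_2$-configuration and invoke the stricter bound $V\le 4L^3/81$ of Theorem~\ref{thm:ths} to obtain $V<\sqrt{2}L^3/27$ with room to spare.

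The interior is the heart of the proof. At a critical point the Lagrange equations $\partial V/\partial x_i=\lambda$ take the compact form
\[
  \sqrt{2}\,\frac{\partial V}{\partial x_i}\;=\;\pi_1(i)\,\pi_2(i)\,-\,x_{\bar\imath}^{\,2}\,,
\]
where $\bar\imath$ denotes the unique edge of $K_4$ sharing no vertex with $e_i$ and $\pi_1(i),\pi_2(i)$ are the perimeters of the two triangles of $K_4$ containing $e_{\bar\imath}$; this follows by identifying the eight included triples containing $x_i$ with the choices of one edge from each of those two triangles, other than the diagonal~$e_{\bar\imath}$. I plan to take pairwise differences $\partial V/\partial x_i-\partial V/\partial x_j=0$ first for the three opposite pairs $\{e_1,e_4\}$, $\{e_2,e_5\}$, $\{e_3,e_6\}$ to obtain three quadratic identities, and then for pairs of edges sharing a vertex to eliminate the remaining freedom, in direct analogy with the dot-product argument following~\eqref{eq:thslagrange}. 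The main technical obstacle is precisely this interior uniqueness step: although the $S_4$-invariance makes the balanced point the natural candidate, carefully ordering the eliminations to rule out non-symmetric positive solutions of the six coupled quadratic Lagrange equations is delicate and is where the structural formula above must be exploited.
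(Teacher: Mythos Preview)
Your overall architecture matches the paper's exactly: reduce to maximizing $V$ on the simplex $L=1$, rule out the boundary, and solve the Lagrange system in the interior. But two pieces of the proposal are not yet solid.

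\textbf{Boundary.} Your first route has a gap. The Klein-four symmetry on the facet $x_1=0$ tells you only that the \emph{set} of critical points is invariant, not that each critical point is fixed; it does not legitimize restricting to $x_2=x_3=x_5=x_6$. Your second route---compare with the $D_1\gsquare D_2$ case---is in fact the paper's argument, and it works cleanly once you verify the identification. At $x_6=0$ (equivalently $x_1=0$ by edge-transitivity), the eight surviving monomials in~\eqref{eq:srsvol} coincide, after the relabelling $x_3\leftrightarrow x_5$, with the volume formula~\eqref{eq:thsvol} evaluated at $x_6=0$ and $\sin\alpha=2\sqrt2/3$, i.e.\ $\alpha=\arccos(1/3)$. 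Then the closed-domain inequality~\eqref{eq:thsinequality} gives $V\le 4L^3/81<\sqrt2\,L^3/27$ on the entire facet, strictly away from~$0$; no iteration over lower strata is needed. Commit to this route and drop the symmetric-point heuristic.

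\textbf{Interior.} Your compact identity $\sqrt2\,\partial_iV=\pi_1(i)\pi_2(i)-x_{\bar\imath}^{\,2}$ is correct and elegant, but you have not yet turned it into a uniqueness proof. The paper's elimination is short and avoids your structural formula entirely. First take the combinations $\partial_1V-\partial_3V+\partial_4V-\partial_6V=0$ and $\partial_2V-\partial_3V+\partial_5V-\partial_6V=0$; from~\eqref{eq:srsvol} these collapse to
\[
  x_1x_4=x_3x_6\qquad\text{and}\qquad x_2x_5=x_3x_6\,,
\]
so the three opposite-edge products coincide. Substitute $x_4=x_3x_6/x_1$, $x_5=x_3x_6/x_2$ into the simple differences $\partial_1V-\partial_2V$, $\partial_3V-\partial_4V$, $\partial_5V-\partial_6V$; each factors as a difference of two coordinates times a manifestly positive quantity, forcing $x_1=x_2=x_3=x_6$ and then $x_4=x_5$ equal to these as well. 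This is exactly the kind of ``ordering of eliminations'' you anticipated; the key first step---equating the opposite-edge products---is what makes the rest linear.
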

\begin{proof}
  We follow the strategy of the proof of Theorem~\ref{thm:ths}.
  For the present case, $L$ and $V$ are functions 
  of the six edge lengths, see Lemma~\ref{lem:srsformula}.

  We first verify the strict inequality $L^3>(27/\sqrt2)V$ 
  along the boundary of $[0,\infty)^6$ without the point~$0$.
  Assume that at least one $x_i$ vanishes. 
  By symmetry of $V$ and $L$ in all variables we may assume $x_6=0$. 
  Then the volume $V$ becomes
  \[V = \frac1{\sqrt2}(x_1x_2x_4+x_1x_2x_5+x_1x_3x_4+x_1x_3x_5+x_1x_4x_5+x_2x_3x_4+x_2x_3x_5+x_2x_4x_5)\,.\]
  This expression matches the volume~\eqref{eq:thsvol} 
  of a \ths~network with $x_6=0$ and $\alpha = \arccos(1/3)=\arcsin(2\sqrt2/3)$,
% note: 3/4 sin\alpha != 1/\sqrt 2 <=> \sin\alpha = 4/(3\sqrt 2) = 2\sqrt 2 / 3
% Thus arcsin(.) verifies our claim.  
% Then \cos\alpha = 1-(2\sqrt 2 /3)^2 = 1 - 8/9 = 1/9 ==> \alpha=arccos 1/3;
% we could equally well start from here, since this is the tetrahedral angle
% which srs has between two tangent planes.
  after exchanging~$x_3$ and~$x_5$.
% Suppose the above terms are labelled 1 to 8. 
% then they appear in \eq:thsvol in the order  2, 1, 5, 8, 4, 7, 3, 6 -- ok!
  Using \eqref{eq:thsinequality}, this proves, as desired
  \[L^3 \geq \frac{81}4V > \frac{27}{\sqrt2}V\,.\]

  Thus it suffices to minimize $L^3/V$ over the set 
  where all coordinates are strictly positive. 
  We maximize $V$ under the constraint $L=1$. 
  A critical point $z=(x_1,\ldots,x_6)\not=0$ satisfies 
  \[ \nabla V(z)= \lambda\nabla L(z) \,,\]
  where $\lambda\in\R$ is a Lagrange multiplier. 
  By \eqref{eq:srsvol} this is equivalent to
  \begin{align}\label{eq:srslagrange}
  \frac1{\sqrt2}\begin{pmatrix}x_2x_4+x_3x_4+x_2x_5+x_3x_5+x_4x_5+x_2x_6+x_3x_6+x_4x_6\\x_1x_4+x_3x_4+x_1x_5+x_3x_5+x_4x_5+x_1x_6+x_3x_6+x_5x_6\\x_1x_4+x_2x_4+x_1x_5+x_2x_5+x_1x_6+x_2x_6+x_4x_6+x_5x_6\\x_1x_2+x_1x_3+x_2x_3+x_1x_5+x_2x_5+x_1x_6+x_3x_6+x_5x_6\\x_1x_2+x_1x_3+x_2x_3+x_1x_4+x_2x_4+x_2x_6+x_3x_6+x_4x_6\\x_1x_2+x_1x_3+x_2x_3+x_1x_4+x_3x_4+x_2x_5+x_3x_5+x_4x_5\end{pmatrix}
  =\lambda\begin{pmatrix}1\\1\\1\\1\\1\\1\end{pmatrix}\,.
	\end{align}
  We claim this implies $z$ satisfies $x_1 = \ldots = x_6 = \frac16$.  
  Again we compute dot products of vectors with~\eqref{eq:srslagrange}.  
  For $(1,0,-1,1,0,-1)$ we obtain $-2x_1x_4+2x_3x_6=0$, 
  and $(0,1,-1,0,1,-1)$ gives $-2x_2x_5+2x_3x_6=0$.  
  Equivalently, 
  $x_1x_4 = x_2x_5 = x_3x_6$ or, since none of the coordinates can vanish,
  $x_4 = x_3x_6/x_1$ and $x_5 = x_3x_6/x_2$.  Using this, we conclude
  \begin{align*}
    0 = \left\langle\nabla V(z),(1,-1,0,0,0,0)\right\rangle 
     &= (x_2 - x_1)\frac{x_6(x_1x_2+x_1x_3+x_2x_3+x_3x_6)}{x_1x_2}\,, \\
	  0 = \left\langle\nabla  V(z),(0,0,1,-1,0,0)\right\rangle 
     &= (x_6 - x_1)\frac{x_1x_2+x_1x_3+x_2x_3+x_3x_6}{x_1}\,,\\
	  0 = \left\langle\nabla V(z),(0,0,0,0,1,-1)\right\rangle 
     &= (x_2 - x_3)\frac{x_6(x_1x_2+x_1x_3+x_2x_3+x_3x_6)}{x_1x_2}\,.
% check, for instance, the 3rd equ'n:
% 0 = (x_2-x_3)x_4 + (x_2+x_3+x_4)(x_6-x_5) 
%   = (x_2-x_3)x_6x_2x_3/(x_1x_2)+(x_1x_2+x_1x_3+x_3x_6)(x_2-x_3)x_6/(x_1x_2)
%   = (x_2-x_3)x_6[x_2x_3+x_1x_2+x_1x_3+x_3x_6]/(x_1x_2)  ok
	\end{align*}
  Therefore $x_1=x_2=x_3=x_6$ and, using $x_1x_4=x_2x_5=x_3x_6$,
  these must agree with $x_4=x_5$.  This proves the claim.
  Reasoning literally as in the proof of Theorem~\ref{thm:ths}
  concludes the proof.
\end{proof}
\begin{remark*}
  The proof of Theorem~\ref{thm:srs} asserts that if $x_6=0$
  the length and volume of the \srs\ network and the \ths\ network 
  with $\alpha=\arccos(1/3)$ agree.
  In particular, the combinatorial graphs of the
  networks agree, see Figure~\ref{fig:graphcontraction}.
\end{remark*}
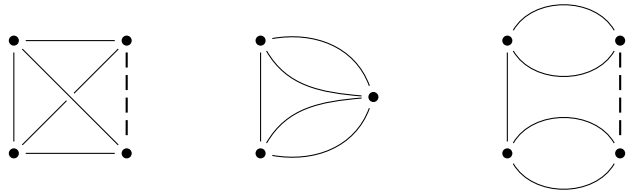
\begin{figure}
	\begin{subfigure}[c]{.25\linewidth}\centering
		\begin{tikzpicture}[scale=1.5]
			\coordinate[dot1](p1) at (0,0);
			\coordinate[dot1](p2) at (0,1);
			\coordinate[dot1](p3) at (1,1);
			\coordinate[dot1](p4) at (1,0);
			\draw(p1)--(p2)--(p3);
			\draw(p4)--(p1)--(p3);
			\draw[steiner2](p3)--(p4);
			\draw[cross](p2)--(p4);
		\end{tikzpicture}
	\end{subfigure}
	\begin{subfigure}[c]{.25\linewidth}\centering
		\begin{tikzpicture}[scale=1.5]
			\coordinate[dot1](p1) at (0,0);
			\coordinate[dot1](p2) at (0,1);
			\coordinate[dot1](p3) at (1,.5);
			\draw(p1)--(p2);
			\draw(p1) to[out=-10,in=-110](p3);
			\draw(p1) to[out=60,in=185](p3);
			\draw(p2) to[out=10,in=110](p3);
			\draw(p2) to[out=-60,in=175](p3);
		\end{tikzpicture}
	\end{subfigure}
	\begin{subfigure}[c]{.25\linewidth}\centering
		\begin{tikzpicture}[scale=1.5]
			\coordinate[dot1](p1) at (0,0);
			\coordinate[dot1](p2) at (0,1);
			\coordinate[dot1](p3) at (1,1);
			\coordinate[dot1](p4) at (1,0);
			\draw(p1)--(p2);
			\draw[steiner2](p3)--(p4);
			\draw(p1) to[out=-60,in=-120](p4);
			\draw(p1) to[out=60,in=120](p4);
			\draw(p2) to[out=-60,in=-120](p3);
			\draw(p2) to[out=60,in=120](p3);
		\end{tikzpicture}     
	\end{subfigure}
	\caption{The graph shown in the middle arises as a limit 
  of the graph $K_4$ (left) or of $D_1\gsquare D_2$ (right) 
  when the dashed edge is contracted.
	\label{fig:graphcontraction}}
\end{figure}

We would like to draw another consequence of Lemma~\ref{lem:srsangle}.
\begin{proposition}
  For each choice of edge lengths $x_1,\ldots,x_6>0$ 
  there exists a Steiner network in $\R^3$ with quotient $K_4$.
  Up to isometry, its vertices $p_1,\ldots,p_6$ 
  are uniquely given by~\eqref{eq:srscoords} as well as
  \begin{align*}
    p_4 &= p_2 + x_4\begin{pmatrix}0\\\frac1{\sqrt3}\\-\sqrt{\frac23}\end{pmatrix}\,, 
    & p_5 &= p_3 + x_5\begin{pmatrix}-\frac12\\-\frac1{2\sqrt3}\\-\sqrt{\frac23}\end{pmatrix}\,, 
    & p_6 &= p_1 + x_6\begin{pmatrix}\frac12\\-\frac1{2\sqrt3}\\-\sqrt{\frac23}\end{pmatrix}\,,
\end{align*}
  and the lattice is 
  $\Lambda = (p_6 - p_2)\mathbb Z + (p_4-p_3)\mathbb Z + (p_5-p_1)\mathbb Z$.
\end{proposition}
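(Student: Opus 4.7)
The plan is to deduce uniqueness from Lemma~\ref{lem:srsangle}, which asserts that a Steiner network on $K_4$ is rigid given its six edge lengths, and then to establish existence by explicit construction. I would normalize the isometry so that $p_0, p_1, p_2, p_3$ take the form \eqref{eq:srscoords}; this places the tangent plane at $p_0$ in the $xy$-plane. By Lemma~\ref{lem:srsangle}, the tangent plane at each $p_i$ ($i = 1, 2, 3$) is forced: it must contain the vector $p_0 - p_i$ and meet the $xy$-plane at dihedral angle $\arccos(1/3)$ about the line through $p_0$ and $p_i$. Fixing the sign of this rotation once (the orientation described in Remark~\ref{rem:srsorientation}) determines all three of the remaining tangent planes uniquely. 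The vertices $p_{i+3}$ then must lie in the respective tangent plane, at distance $x_{i+3}$ from $p_i$, in the direction making a $120^\circ$-angle with the edge already present back to $p_0$. A short computation with unit vectors obtained by rotating standard basis vectors lying in the $xy$-plane through the tetrahedral angle then yields precisely the formulas for $p_4, p_5, p_6$ stated in the proposition.

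Next, three items remain to be checked. First, balancing at $p_1, p_2, p_3$: the three unit vectors from each $p_i$ to its three neighbours in $N$ must sum to zero. This reduces to the identity $|\cos\beta| = 1/3$ established in the proof of Lemma~\ref{lem:srsangle} and is a short trigonometric verification. Second, the lattice: the translations $g_1 := p_6 - p_2$, $g_2 := p_4 - p_3$ and $g_3 := p_5 - p_1$ are exactly the generators appearing in the proof of Lemma~\ref{lem:srsformula}, and the determinant computation there shows they are linearly independent whenever the spanned volume is positive, which by \eqref{eq:srsvol} holds for all positive $x_i$. Third, the quotient: under the identifications $p_6 \sim p_2$, $p_4 \sim p_3$ and $p_5 \sim p_1$ induced by $\Lambda$, the six edges descend to the six edges of the complete graph on four vertices, namely $K_4$.

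The main obstacle will be the bookkeeping for the orientations of the three rotations used to produce $p_4, p_5, p_6$. A consistent choice of signs is needed so that the four tangent planes form the four faces of a single regular tetrahedron, rather than an inconsistent collection, since $\beta$ and $-\beta$ yield mirror-image networks per Remark~\ref{rem:srsorientation}. Once a uniform orientation is fixed, both the explicit formulas for $p_4,p_5,p_6$ and the lattice description follow directly from the computations already present in the proof of Lemma~\ref{lem:srsformula}, so essentially no new calculation is required beyond reading off the vertex positions and verifying closure of the star at each vertex.
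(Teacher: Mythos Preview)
Your proposal is correct and matches the paper's treatment. The paper does not give a separate proof of this proposition; it is stated as an immediate consequence of Lemma~\ref{lem:srsangle}, and the explicit coordinates and lattice generators are precisely those already computed in the proof of Lemma~\ref{lem:srsformula} (indeed your observation that $g_1=p_6-p_2$, $g_2=p_4-p_3$, $g_3=p_5-p_1$ simply rewrites the sums appearing there). Your plan unpacks exactly this implicit argument, including the orientation choice of Remark~\ref{rem:srsorientation} and the linear independence via positivity of~\eqref{eq:srsvol}.
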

\begin{figure}[b]
	\centering
	\includegraphics[width=.412\linewidth]{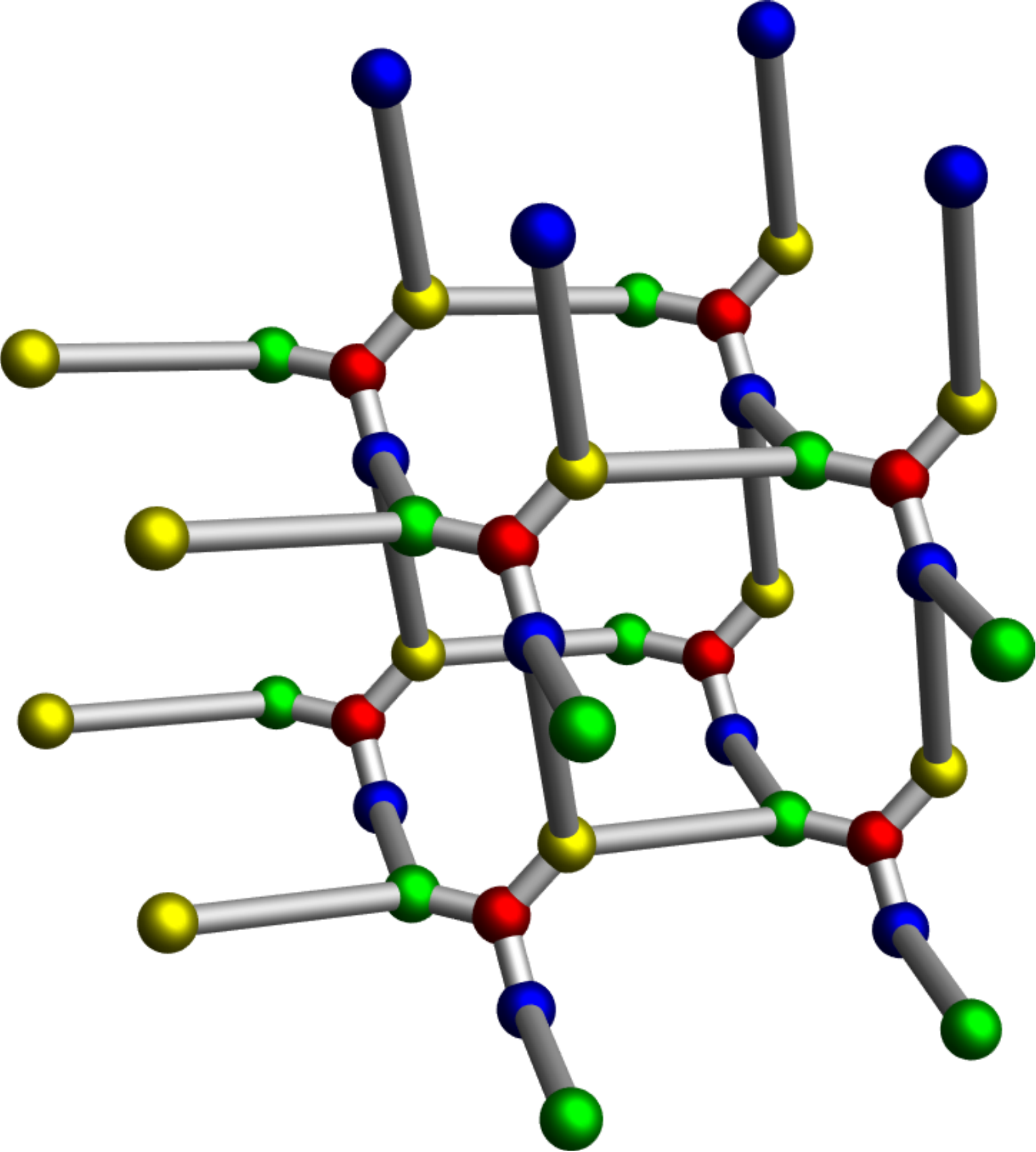}
	\caption{A triply periodic Steiner network with quotient $K_4$ where the lattice is primitive.  The eight vertices shown in red correspond to the eight vertices of a cube.}
	\label{fig:srscube}
\end{figure}
\noindent
% The vertices  $p_0+\Lambda,\ldots,p_3+\Lambda$ 
% determine edges labelled as in~Figure~\ref{fig:srs}: 
% \begin{align*}
%   e_1 &= \overline{p_0p_1} + \Lambda\,, 
%   & e_2 &= \overline{p_0p_2}+\Lambda\,, 
%   & e_3 &= \overline{p_0p_3} + \Lambda\,, \\
%   e_4 &= \overline{p_2p_4} + \Lambda\,, 
%   & e_5 &= \overline{p_3p_5}+\Lambda\,, 
%   & e_6 &= \overline{p_1p_6} + \Lambda\,.
% \end{align*}
Setting, for instance, $3x_1=3x_2=3x_3=x_4=x_5=x_6=3$ gives
\[
  g_1 = \begin{pmatrix}3\\-\sqrt3\\-\sqrt6\end{pmatrix}\,,\quad 
  g_2 = \begin{pmatrix}0\\2\sqrt3\\-\sqrt6\end{pmatrix}\,,\quad 
  g_3 = \begin{pmatrix}-3\\-\sqrt3\\-\sqrt6\end{pmatrix}\,.
\]
% |g_1|^2 = 9+3+6  =  |g_2|^2 = 12+6  =  |g_3|^2 = 9+3+6 , and
% scalar products of distinct vectors vanish.
These vectors have the same length and are orthogonal
so that the lattice is primitive cubic.
Moreover, $L^3/V = 16\sqrt2\approx22.63$. See Figure~\ref{fig:srscube}.

%%%%%%%%%%%%%%%%%%%%%%%%%%%%%%%%%%%%%%%%%%%%%%%%%%%%%%%%%%%%%%%%%%%%%%
\section{Homotopy from a minimizing \ths-network to a \texorpdfstring{$K_4$}{K4}-network which decreases length}\label{sec:homotopy}

We know from Theorem~\ref{thm:ths} that minimizing networks 
with quotient $D_1\gsquare D_2$ 
are part of the one-parameter family~\eqref{eq:thsequality} 
with a fixed lattice.
In the present section we show there is a continuous $1$-parameter family 
leading from a given minimizing \ths\ network to a network with 
smaller length and quotient~$K_4$.

The transition between the two distinct combinatorial types
is achieved via a network which has one degree-$4$ vertex
in the quotient.
There are two ways to split this vertex into two degree-$3$ vertices,
as is well-known from the Steiner tree problem on four vertices. 
See Figures~\ref{fig:graphcontraction} and~\ref{fig:homotopy} for 
the combinatorial picture, and~Figure~\ref{fig:thsk4} for the geometry.

\begin{theorem}\label{th:homotopy}
Let $N_{-1}$ be a minimizing \ths~network as in \eqref{eq:thsequality},
scaled such that $x_1=1$, and with lattice~$\Lambda_0$.  
Then there is a continuous family of triply periodic networks $N_t\subset\R^3$ for $t\in[-1,1]$ from $N_{-1}$ to a Steiner network $N_1$ 
with the following properties:
\begin{itemize}
	\item All $N_t$ have the same lattice $\Lambda_0$ and so the same volume~$V$.
	\item $N_t$ is a network with quotient graph $D_1\gsquare D_2$ for $-1\leq t<0$, and~$K_4$ for~$0<t\leq1$.
	\item The length $t\mapsto L(N_t)$ is non-increasing and $L(N_1)<L(N_{-1})$.
\end{itemize}
\end{theorem}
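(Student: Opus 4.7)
The plan is to construct $N_t$ in two stages glued at $t=0$, where one edge shrinks to zero length and the combinatorial type changes. This exploits the combinatorial coincidence of Figure~\ref{fig:graphcontraction}: contracting an edge in either $D_1\gsquare D_2$ or $K_4$ yields the same three-vertex quotient graph, so at $t=0$ the two combinatorial types meet along a common degenerate configuration.

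\textbf{Stage 1 ($t\in[-1,0]$).} In the \ths-parametrization of Lemma~\ref{lem:thsformula}, keep $x_1=x_2=x_3=x_4=1$ and $\alpha=\pi/2$ fixed, and let $x_6(t) = -t\,x_6(-1)$ with $x_5(t) = x_5(-1) + (t+1)\,x_6(-1)$, so that $x_5+x_6 = 1/2$ is constant. Inspecting~\eqref{eq:thslattice}, the generators $g_1,g_2,g_3$ depend on $x_5, x_6$ only through their sum, so the lattice $\Lambda_0$ is preserved throughout. The length $L(N_t) = 4 + (x_5+x_6) = 9/2$ is likewise constant. For $t\in[-1,0)$ the network is \ths-Steiner; at $t=0$ the edge $e_6$ has degenerated, and the two vertices it joined (the lifts $p_3$ and $p_6 = p_0 + g_3$) coincide, producing a quotient with a single vertex of degree~$4$ whose four incident edges connect to $p_1$ and $p_2$ (two each).

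\textbf{Stage 2 ($t\in[0,1]$).} Apply the Steiner-tree splitting of Lemma~\ref{le:impossibledegrees} to the degree-$4$ vertex. A direct computation shows that the four unit edge-directions at that vertex pair up into two ``cross-pairs'' (one $p_1$-edge with one $p_2$-edge) making a pairwise angle strictly less than $120^\circ$. Introducing a Steiner point along the bisector of such a cross-pair creates a pair of new degree-$3$ vertices, each connected to one $p_1$-edge, one $p_2$-edge, and to each other; the resulting quotient is $K_4$. I parametrize the splitting distance by a continuous variable $s$ starting at $0$ (recovering $N_0$) and, simultaneously, adjust all vertex positions so as to (i) preserve the lattice $\Lambda_0$ and (ii) arrive at a bona fide \srs-Steiner network at $s=s_{\max}$. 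By Lemma~\ref{lem:srsformula} the lattice vectors are linear in the six edge lengths, so the constraint ``lattice $=\Lambda_0$'' is a linear system whose positive-orthant solution set is parametrized by $s$; combined with the tetrahedral tangent-plane condition of Lemma~\ref{lem:srsangle} at the terminal point, this singles out $N_1$.

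\textbf{Length monotonicity and main obstacle.} On Stage 1 the length is constant by construction. At $t=0^+$ the strict-inequality conclusion of Lemma~\ref{le:impossibledegrees} guarantees $L(N_t) < 9/2$, and reparametrizing Stage 2 so that $L(N_t)$ is monotone in $t$ yields non-increasingness on $[0,1]$ and hence $L(N_1) < L(N_{-1}) = 9/2$. The principal obstacle is the explicit execution of Stage 2: one must exhibit the continuous family of networks from the degenerate $N_0$ to a Steiner $K_4$-network with lattice $\Lambda_0$, showing that the tangent planes at $p_1, p_2$ can be continuously rotated from the perpendicular configuration ($\alpha=\pi/2$, inherited from Stage 1) to the tetrahedral configuration (required by Lemma~\ref{lem:srsangle} at $t=1$) while the lattice constraint is maintained. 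I expect this to reduce, via the explicit parametrization of Lemma~\ref{lem:srsformula}, to a finite-dimensional interpolation in the six edge lengths, solvable by direct calculation once the cross-pair at the degree-$4$ vertex is chosen.
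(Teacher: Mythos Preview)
Your two-stage strategy matches the paper's, and Stage~1 is essentially correct (the paper shrinks $x_5$ rather than $x_6$, but the roles are symmetric and the lattice argument via~\eqref{eq:thslattice} is the same).  Stage~2, however, is not a proof as it stands, and two of its ingredients would actually fail.

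First, you cannot use Lemma~\ref{lem:srsformula} to parametrize the intermediate family $N_t$: that lemma, and the linearity of the lattice vectors in the edge lengths, presupposes a \emph{Steiner} network with quotient~$K_4$ (i.e.\ the tetrahedral tangent-plane configuration of Lemma~\ref{lem:srsangle} is already in force).  For a fixed lattice~$\Lambda_0$ such Steiner networks do not form a continuous one-parameter family, so the intermediate $N_t$ for $0<t<1$ must be non-Steiner, and the ``linear system in the six edge lengths'' is not available.  Second, the monotonicity step ``reparametrize Stage~2 so that $L(N_t)$ is monotone'' is invalid: a reparametrization of~$t$ cannot convert a non-monotone function into a monotone one.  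Lemma~\ref{le:impossibledegrees} only gives a single shorter network near~$t=0$; it says nothing about the global behaviour of~$L$ along a path to a Steiner endpoint.

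The paper resolves Stage~2 by an entirely explicit construction.  It writes down coordinates for four vertices $p_0^1,\dots,p_3^1$ of a network~$N_1$ with lattice~$\Lambda_0$, checks balancing~\eqref{eq:equilibrium} at each vertex by direct computation (so $N_1$ is Steiner with quotient~$K_4$), and computes $L(N_1)=\frac{\sqrt3}{2}(2+\sqrt2+\sqrt3)<\frac92=L(N_0)$.  For $0<t<1$ it then defines $N_t$ by \emph{linear interpolation of the vertex positions}, $p_i^t:=(1-t)p_i^0+t\,p_i^1$, with the same incidences and lattice.  Each edge length $t\mapsto|p_i^t-p_j^t|$ is the Euclidean norm of an affine function of~$t$, hence convex; so $L(N_t)$ is convex on $[0,1]$, has a critical point at $t=1$ (because $N_1$ is balanced), and satisfies $L(N_1)<L(N_0)$.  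These three facts force $L$ to be strictly decreasing on~$[0,1]$.  Note that this bypasses the tangent-plane rotation issue you raise: the intermediate $N_t$ are not Steiner and no angle condition is imposed on them.
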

\noindent
We will specify the networks $N_t$ in terms of six generating vertices
$p_i^t$ for $i=1,\ldots,6$, as well as the straight segments joining pairs 
of these vertices given by Figure~\ref{fig:homotopy}. 
The lattice $\Lambda_0$ then generates~$N_t$.

Let us first describe the networks $N_t$ for negative~$t$
in terms of the \ths\ family~\eqref{eq:thsequality}. 
The given \ths~network~$N_{-1}$,
subject to \eqref{eq:thsequality} with $x_1=1$, has length $L(N_{-1})=9/2$ and
according to \eqref{eq:thslattice} its lattice $\Lambda_0$ is generated by
\[g_1 = \bigl(0,\sqrt3,0\bigr)\,,\quad g_2 = \bigl(0,0,-\sqrt3\bigr)\,,\quad 
  g_3 = \Bigl(\frac32,\frac{\sqrt3}2,\frac{\sqrt3}2\Bigr)\,.\]
The network $N_{-1}$ is uniquely determined by the edge length 
$x_5=:\xi\in (0,\frac12)$.  
For $-1\leq t<0$ we define $N_t$ as the \ths\ network,
subject to \eqref{eq:thsequality}, 
still with $x_1=1$ but with $x_5:=\vert t\vert\xi$.
This prescribes the six vertices $p_i^t$ for $t\in[-1,0)$
by \eqref{eq:thspoints0124} %\ref{eq:thspoints35}
to~\eqref{eq:thspoints6}.

\begin{figure}
	\centering
	\begin{subfigure}[t]{.3\linewidth}
		\centering
		\begin{tikzpicture}[x=1.5cm,y=1.5cm]
			\coordinate[dot2](p0) at (0,0);
			\coordinate[dot1](p1) at (.5,.866);
			\coordinate[dot3](p2) at (1,.866);
			\coordinate[dot4](p3) at (1.5,1.732);
			\coordinate[dot2](p4) at (0,1.732);
			\coordinate[dot4](p5) at (1.5,0);
			\coordinate[dot2](p6) at (2,1.732);
			\draw[steiner1](p0)--(p1);
			\draw[steiner2](p1)--(p4);
			\draw[steiner3](p1)--node[above]{$x_5$}(p2);
			\draw[steiner5](p2)--(p5);
			\draw[steiner4](p2)--(p3);
			\draw[steiner6](p3)--node[below]{$x_6$}(p6);
			\node[left] at (p1) {$p_1^{-1}$};
			\node[right] at (p2) {$p_2^{-1}$};
			\node[above] at (p6) {$p_6^{-1}$};
			\node[above] at (p4) {$p_4^{-1}$};
			\node[below] at (p0) {$p_0^{-1}$};
			\node[below] at (p5) {$p_5^{-1}$};
			\node[above] at (p3) {$p_3^{-1}$};
		\end{tikzpicture}
		\caption*{$t=-1$}
	\end{subfigure}
	\begin{subfigure}[t]{.3\linewidth}
		\centering
		\begin{tikzpicture}[x=1.5cm,y=1.5cm]
			\coordinate[dot2](p0) at (0,0);
			\coordinate[dot1](p1) at (.5,.866);
			\coordinate[dot3](p2) at (p1);
			\coordinate[dot4](p3) at (1,1.732);
			\coordinate[dot2](p4) at (0,1.732);
			\coordinate[dot4](p5) at (1,0);
			\coordinate[dot2](p6) at (2,1.732);
			\draw[steiner1](p0)--(p1);
			\draw[steiner2](p1)--(p4);
			\draw[steiner3](p1)--(p2);
			\draw[steiner5](p2)--(p5);
			\draw[steiner4](p2)--(p3);
			\draw[steiner6](p3)--(p6);
			\node[right] at (p1) {$p_1^0=p_2^0$};
			\node[above] at (p6) {$p_6^0$};
			\node[above] at (p4) {$p_4^0$};
			\node[below] at (p0) {$p_0^0$};
			\node[below] at (p5) {$p_5^0$};
			\node[above] at (p3) {$p_3^0$};
		\end{tikzpicture}
		\caption*{$t=0$}
	\end{subfigure}
	\begin{subfigure}[t]{.3\linewidth}
		\centering
		\begin{tikzpicture}[x=1.5cm,y=1.5cm]
			\coordinate[dot2](p0) at (0,0);
			\coordinate[dot3](p1) at (.75,.6);
			\coordinate[dot1](p2) at (.75,1);
			\coordinate[dot4](p3) at (1.5,1.732);
			\coordinate[dot2](p4) at (0,1.732);
			\coordinate[dot4](p5) at (1.5,0);
			\coordinate[dot2](p6) at (2,1.732);
			\draw[steiner1](p0)--(p1);
			\draw[steiner3](p2)--(p1);
			\draw[steiner2](p4)--(p2);
			\draw[steiner4](p3)--(p2);
			\draw[steiner5](p5)--(p1);
			\draw[steiner6](p6)--(p3);
			\node[below] at (p0) {$p_0^1$};
			\node[below] at (p1) {$p_1^1$};
			\node[above] at (p2) {$p_2^1$};
			\node[above] at (p3) {$p_3^1$};
			\node[above] at (p4) {$p_4^1$};
			\node[below] at (p5) {$p_5^1$};
			\node[above] at (p6) {$p_6^1$};
		\end{tikzpicture}
		\caption*{$t=1$}
	\end{subfigure}
	\caption{Homotopy of Theorem~\ref{th:homotopy}, schematically.  
The transition from the graph $D_1\gsquare D_2$ (left) to the $K_4$ graph (right) is via the graph (center) with a vertex of degree~$4$.}
	\label{fig:homotopy}
\end{figure}
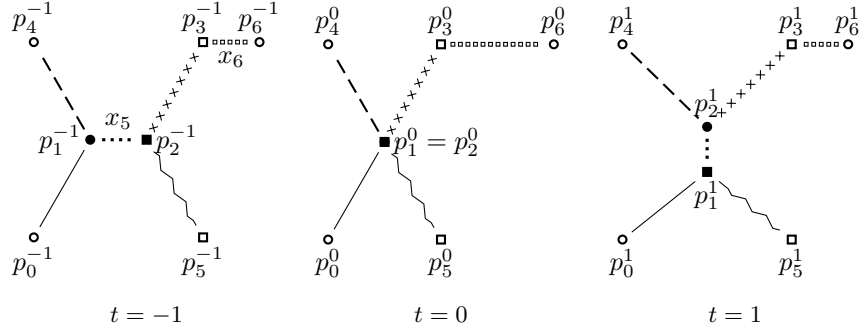

For $t=0$ the limiting data 
\[
  x_1=x_2=x_3=x_4=1\,,\quad x_5=0\,,\quad x_6=\frac12\,,\quad \alpha=\frac\pi2
\]
similarly defines a network~$N_0$ with $p^0_1=p^0_2=0$.
%
% The network $N_0$ has the vertices
% \begin{align*}
%	p_0^0 &= \bigl(-\frac12,-\frac{\sqrt3}2,0\bigr)\,,&
%	p_1^0 = p_2^0 &= (0,0,0)\,,&
%	p_3^0 &= \bigl(\frac12,0,\frac{\sqrt3}2\bigr) \,, 
% \end{align*}
% as well as
% \begin{align*}
%  p_4^0 &= p_0^0 + g_1\,, & p_5^0 &= p_3^0 + g_2\,, & p_6^0 &= p_0^0 + g_3\,.
% \end{align*}
Inspection of Figure~\ref{fig:homotopy} shows that under this condition
the network $N_0$ can also be understood as a 
limit of networks with quotient~$K_4$, where again the edge between the
points $p_1^t$ and~$p_2^t$ has length tending to~$0$ as $t\searrow0$.

To complete the proof of Theorem~\ref{th:homotopy}, 
let us now make the deformation of the network~$N_0$  
into a Steiner network with quotient~$K_4$ explicit.
\begin{lemma}
  There is a continuous family $N_t$, $0\leq t\leq1$, of networks 
  with lattice $\Lambda_0$, such that $N_1$ is a Steiner network,
  the length $t\mapsto L(N_t)$ is (strictly) decreasing, and for $0<t\leq1$
  the quotient graph is $K_4$.
\end{lemma}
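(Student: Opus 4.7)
The plan is to construct the family $N_t$, $t\in[0,1]$, in two stages. In Stage~1, I split the degree-$4$ vertex $p_1^0=p_2^0=0$ explicitly according to the $K_4$ pairing of Figure~\ref{fig:homotopy}(right), which groups $\{p_0,p_5\}$ at $p_1$ and $\{p_4,p_3\}$ at $p_2$. From the coordinates built up in the preceding paragraphs the four unit directions at the degenerate vertex are $(-\tfrac12,-\tfrac{\sqrt3}2,0)$, $(-\tfrac12,\tfrac{\sqrt3}2,0)$, $(\tfrac12,0,\tfrac{\sqrt3}2)$, and $(\tfrac12,0,-\tfrac{\sqrt3}2)$, and the dot product within each intended pair equals $-1/4$, giving an angle $\arccos(-1/4)\approx 104.5^\circ<120^\circ$. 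Let $u:=(0,1,1)/\sqrt2$ be the unit bisector of $\{p_4,p_3\}$; by symmetry $-u$ bisects $\{p_0,p_5\}$. For $t$ in a small initial interval $[0,\tau]$, I set $p_1^t:=-tu$ and $p_2^t:=+tu$, while keeping $p_0^t,p_3^t,p_4^t,p_5^t,p_6^t$ at their $t=0$ positions, so that the lattice $\Lambda_0$ is automatically preserved. A direct computation of the six edge lengths yields
\[L(N_t)=4\sqrt{1-\sqrt{3/2}\,t+t^2}+2t+\tfrac12,\]
whose derivative at $t=0$ equals $-\sqrt6+2<0$, confirming the tripod-argument decrease of Lemma~\ref{le:impossibledegrees} and giving strict monotone decrease on $[0,\tau]$.

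In Stage~2, I extend the family from $N_\tau$ to a Steiner network $N_1$ with quotient $K_4$ and lattice $\Lambda_0$. Existence of such an $N_1$ is guaranteed by Theorem~\ref{th:minimizer} for the fixed lattice, and Theorem~\ref{thm:srs} combined with Theorem~\ref{thm:ths} shows that its length is strictly smaller than $L(N_0)=9/2$, in particular smaller than $L(N_\tau)$. I connect $N_\tau$ to a fixed choice of such $N_1$ by a continuous path inside the finite-dimensional manifold $\mathcal M$ of triply periodic networks with quotient $K_4$ and lattice $\Lambda_0$, parameterized by the positions of the four orbit representatives in $\R^3$. Monotone strict decrease of $L$ along the path is produced either by following the negative gradient flow of $L$ on $\mathcal M$ (reparameterized so that the Steiner limit point is attained at $t=1$), or by an explicit interpolation between the vertex coordinates of $N_\tau$ and $N_1$ whose length decrease is verified by direct calculation. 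Either construction glues continuously with Stage~1 at $t=\tau$.

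The main obstacle is ensuring that the Stage~2 path actually terminates at a Steiner network without collapsing an edge or drifting indefinitely in a level set of $L$. Convergence of the gradient flow follows either from the Lojasiewicz inequality for the real-analytic $L$ on the finite-dimensional manifold $\mathcal M$, or is bypassed by using Theorem~\ref{th:minimizer} to produce $N_1$ \emph{a priori} and interpolating toward it; any would-be edge collapse along the path reintroduces a vertex of degree $\ge 4$, which by Lemma~\ref{le:impossibledegrees} admits a further local length-decreasing perturbation staying inside the $K_4$ stratum, contradicting the minimality of the length along the chosen path and so cannot occur.
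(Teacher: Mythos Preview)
Your Stage~1 is correct and nicely computed. The gap is Stage~2: you outline two routes but carry out neither. For the gradient flow, you cannot pre-select the endpoint~$N_1$ obtained from Theorem~\ref{th:minimizer}; the flow from $N_\tau$ lands wherever it lands, possibly at a non-minimizing Steiner critical point or not converging at all. Your no-collapse argument (``contradicting the minimality of the length along the chosen path'') is not valid for a gradient trajectory: nothing in the flow enforces any minimality, and Lemma~\ref{le:impossibledegrees} only says a degree-$4$ configuration is not a local minimum, which does not by itself prevent the flow from approaching such a boundary point in infinite time. Invoking {\L}ojasiewicz presupposes the trajectory stays in a compact subset of the open stratum where $L$ is real-analytic, which is exactly what is in question. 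The alternative ``explicit interpolation \dots\ verified by direct calculation'' is asserted but not performed.

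The paper's proof avoids all of this with a single device you did not use. It writes down $N_1$ explicitly (concrete coordinates for $p_0^1,\ldots,p_3^1$, checked to satisfy~\eqref{eq:equilibrium} and to have lattice~$\Lambda_0$), then sets $p_i^t:=(1-t)p_i^0+t p_i^1$ directly from $t=0$, with no preliminary splitting stage. The key observation is that each edge length $|p_i^t-p_j^t|$ is the Euclidean norm of an affine function of~$t$, hence convex; therefore $t\mapsto L(N_t)$ is convex on~$[0,1]$. Since $N_1$ is Steiner, $L'(1)=0$, so convexity forces $L'(t)\le 0$ for all~$t$, and together with $L(N_1)<L(N_0)=9/2$ (computed explicitly) one gets the required decrease. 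This three-line argument replaces both your gradient-flow and your unperformed direct calculation, and it makes the two-stage decomposition unnecessary.
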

\begin{proof}
  The network $N_1$ has the four vertices
  \begin{align*}
    p_0^1 &:= \Bigl(-\frac12,-\frac{\sqrt3}2,0\Bigr)\,, &
    p_1^1 &:= \Bigl(\frac14(\sqrt3-2),
                   \frac{\sqrt3}8(\sqrt2-2),\frac{\sqrt3}8(\sqrt2-2)\Bigr)\,,\\
    p_3^1 &:= \Bigl(\frac12(\sqrt3-1),0,\frac{\sqrt3}2\Bigr)\,,&
    p_2^1 &:= \Bigl(\frac14(\sqrt3-2),
                  -\frac{\sqrt3}8(\sqrt2-2),-\frac{\sqrt3}8(\sqrt2-2)\Bigr)\,,
  \end{align*}
  as well as the copies under the lattice
  \begin{align*}
    p_4^1 &= p_0^1 + g_1\,, & p_5^1 &= p_3^1 + g_2\,, & p_6^1 &= p_0^1 + g_3\,.
  \end{align*}
  We connect them with the six straight segments of Figure~\ref{fig:homotopy}.
  The $\Lambda_0$-orbit then defines the network~$N_1$, with quotient~$K_4$. 
  It can be checked by calculation that the balancing 
  equation~\eqref{eq:equilibrium} holds at each of the vertices 
  $p_0^1,\ldots,p_3^1$, and so $N_1$ is a Steiner network. 
  The length of $N_1$ is 
  \begin{align*}
    L(N_1) 
    &= \vert p_1^1 - p_0^1\vert + \vert p_2^1 - p_1^1\vert 
    + \vert p_2^1 - p_4^1\vert + \vert p_1^1 - p_5^1\vert 
    + \vert p_2^1 - p_3^1\vert + \vert p_6^1-p_3^1\vert\\
    &= \frac{\sqrt3}2+\frac{\sqrt3}2(\sqrt2-1)+\frac{\sqrt3}2
     +\frac{\sqrt3}2+\frac{\sqrt3}2+\frac{\sqrt3}2(\sqrt3-1)\\
    &= \frac{\sqrt3}2(2+\sqrt2+\sqrt3). % \equiv 4.4567957
  \end{align*}

  We define $N_t$ for $t\in(0,1)$ as a convex combination of the vertices 
  of $N_0$ and $N_1$, 
  \[
    p_i^t := (1-t)p_i^0+tp_i^1\,, \qquad i=1,\ldots, 6\,,
  \]
  again connected with six straight segments as in Figure~\ref{fig:homotopy}.

  That $L(N_t)$ is a decreasing function of $t\in[0,1]$
  follows from three facts.  
  First, $L(N_1)$ is strictly less than $L(N_{0})=9/2$.  
  Second, the function $t\mapsto L(N_t)$ is critical at $t=1$
  since $N_1$ is Steiner.
  Third, each term $t\mapsto\vert p_i^t-p_j^t\vert$ of $L(N_t)$ is
  convex on $[0,1]$, and so is $t\mapsto L(N_t)$.
\end{proof}

\nocite{*}
\bibliographystyle{amsplain}
\bibliography{steiner}
% Journal names according to http://www.ams.org/msnhtml/serials.pdf

\end{document}